\newcommand{\periodafter}[1]{\ifstrempty{#1}{}{#1.}}
\titleformat{\section}[block]{\scshape\filcenter\LARGE}{\thesection.}{.5em}{}
\titleformat{\subsection}[block]{\bfseries\filcenter\large}{\thesubsection.}{.5em}{\medskip}
\titleformat{\subsubsection}[runin]{\bfseries}{\thesubsubsection.}{.5em}{\periodafter}%{}[.]
\titlespacing{\subsubsection}{0pt}{\topsep}{.5em}
\titleformat{\section}[block]{\scshape\filcenter\Large}{\thesection.}{.5em}{}
\titleformat{\subsection}[runin]{\bfseries}{\thesubsection.}{.5em}{}[.]
\titleformat{\subsubsection}[runin]{\bfseries}{\thesubsubsection.}{.5em}{}[.]
\titlespacing{\subsubsection}{0pt}{10pt}{.5em}
\theoremstyle{ntheorem}
  	\newtheorem{theorem}[subsubsection]{Theorem}
  	\newtheorem{proposition}[subsubsection]{Proposition}
	\newtheorem{lemma}[subsubsection]{Lemma}
  	\newtheorem{corollary}[subsubsection]{Corollary}
\theoremstyle{definition}
	\newtheorem{example}[subsubsection]{Example}
	\newtheorem{Algorithm}[subsubsection]{Algorithm}
	\newtheorem{property}[subsubsection]{Property}
\def\@equationname{equation}
\def\hhmm{\number\hh:\ifnum\mm<10{}0\fi\number\mm}
	\edef\Drop@@{%
		\dimen@=#1\relax
		\dimen@=.5\dimen@
		\A@=-\sinDirection\dimen@
		\B@=\cosDirection\dimen@
		\setboxz@h{%
			\setbox2=\hbox{\kern3\A@\raise3\B@\copy\z@}%
			\dp2=\z@ \ht2=\z@ \wd2=\z@ \box2
			\setbox2=\hbox{\kern\A@\raise\B@\copy\z@}%
			\dp2=\z@ \ht2=\z@ \wd2=\z@ \box2
			\setbox2=\hbox{\kern-\A@\raise-\B@\copy\z@}%
			\dp2=\z@ \ht2=\z@ \wd2=\z@ \box2
			\setbox2=\hbox{\kern-3\A@\raise-3\B@ \noexpand\boxz@}%
			\dp2=\z@ \ht2=\z@ \wd2=\z@ \box2
		}%
		\ht\z@=\z@ \dp\z@=\z@ \wd\z@=\z@ \noexpand\styledboxz@
	}%
\xydef@\Tttip@{\kern2pt \vrule height2pt depth2pt width\z@
	\Tttip@@ \kern2pt \egroup
	\U@c=0pt \D@c=0pt \L@c=0pt \R@c=0pt \Edge@c={\circleEdge}%
	\def\Leftness@{.5}\def\Upness@{.5}%
	\def\Drop@@{\styledboxz@}\def\Connect@@{\straight@{\dottedSpread@\jot}}}
\xydef@\Tttip@@{%
	\dimen@=.25\dimen@
%	\A@=-\sinDirection\dimen@
 	\B@=\cosDirection\dimen@
	\setboxz@h\bgroup\reverseDirection@\line@ \wdz@=\z@ \ht\z@=\z@ \dp\z@=\z@
%	\kern\A@ \raise\B@ \boxz@ \kern\L@c
%	\kern-\L@c \boxz@ \kern\L@c
	{\vDirection@(1,-1)\xydashl@ \xyatipfont\char\DirectionChar}%
	{\vDirection@(1,+1)\xydashl@ \xybtipfont\char\DirectionChar}%
}
\xydef@\ar@form{
	\ifx \space@\next \expandafter\DN@\space{\xyFN@\ar@form}%
	\else\ifx ^\next \DN@ ^{\xyFN@\ar@style}\edef\arvariant@@{\string^}%
	\else\ifx _\next \DN@ _{\xyFN@\ar@style}\edef\arvariant@@{\string_}%
	\else\ifx 0\next \DN@ 0{\xyFN@\ar@style}\def\arvariant@@{0}%
	\else\ifx 1\next \DN@ 1{\xyFN@\ar@style}\def\arvariant@@{1}%
	\else\ifx 2\next \DN@ 2{\xyFN@\ar@style}\def\arvariant@@{2}%
	\else\ifx 3\next \DN@ 3{\xyFN@\ar@style}\def\arvariant@@{3}%
	\else\ifx 4\next \DN@ 4{\xyFN@\ar@style}\def\arvariant@@{4}%
	\else\ifx \bgroup\next \let\next@=\ar@style
	\else\ifx [\next \DN@[##1]{\ar@modifiers{[##1]}}%]
	\else\ifx *\next \DN@ *{\ar@modifiers}%
	\else\addLT@\ifx\next \let\next@=\ar@slide
	\else\ifx /\next \let\next@=\ar@curveslash
	\else\ifx (\next \let\next@=\ar@curveinout %)
	\else\addRQ@\ifx\next \addRQ@\DN@{\ar@curve@}%
	\else\addLQ@\ifx\next \addLQ@\DN@{\xyFN@\ar@curve}%
	\else\addDASH@\ifx\next \addDASH@\DN@{\defarstem@-\xyFN@\ar@}%
	\else\addEQ@\ifx\next \addEQ@\DN@{\def\arvariant@@{2}\defarstem@-\xyFN@\ar@}%
	\else\addDOT@\ifx\next \addDOT@\DN@{\defarstem@.\xyFN@\ar@}%
	\else\ifx :\next \DN@:{\def\arvariant@@{2}\defarstem@.\xyFN@\ar@}%
	\else\ifx ~\next \DN@~{\defarstem@~\xyFN@\ar@}%
	\else\ifx !\next \DN@!{\dasharstem@\xyFN@\ar@}%
	\else\ifx ?\next \DN@?{\ar@upsidedown\xyFN@\ar@}%
	\else \let\next@=\ar@error
	\fi\fi\fi\fi\fi\fi\fi\fi\fi\fi\fi\fi\fi\fi\fi\fi\fi\fi\fi\fi\fi\fi\fi \next@}
\newcommand{\qfl}{\xymatrix@1@C=10pt{\ar@4 [r] &}}
\DeclareMathOperator{\LR}{LR}
\DeclareMathOperator{\U}{\mathcal{U}}
\DeclareMathOperator{\Tc}{\mathcal{T}}
\DeclareMathOperator{\Si}{\mathcal{S}}
\DeclareMathOperator{\Rec}{Rect}
\DeclareMathOperator{\YoungST}{Ys}
\DeclareMathOperator{\YoungT}{Yt}
\DeclareMathOperator{\SkewT}{St}
\DeclareMathOperator{\RSK}{sRSK}
\DeclareMathOperator{\Ba}{Ba}
\renewcommand{\phi}{\varphi}
\renewcommand{\epsilon}{\varepsilon}
\newcommand{\Zb}{\mathbb{Z}}
\renewcommand{\Pr}{\EuScript{P}}
\newcommand{\Yr}{\EuScript{Y}}
\newcommand{\M}{\mathbf{M}}
\renewcommand{\leq}{\leqslant}
\renewcommand{\geq}{\geqslant}
\newcommand{\insl}[1]{\rightsquigarrow_{#1}}
\newcommand{\insr}[1]{\;\raisebox{0.1em}{{\rotatebox[origin=c]{180}{$\rightsquigarrow$}}}_{#1}\;}
\definecolor{cyan}{RGB}{175,238,238} 
\definecolor{Red}{rgb}{0.96,0.17,0.20}
\definecolor{RedD}{rgb}{0.57, 0.0, 0.04}
\definecolor{Green}{rgb}{0.0,1.0,0.0}
\definecolor{GreenL}{rgb}{0.0,1.0,0.0} 
\definecolor{GreenD}{rgb}{0.64,0.76,0.68} 
\definecolor{Yellow}{rgb}{1.0,1.0,0.19}
\definecolor{YellowD}{rgb}{0.93, 0.84, 0.25}
\definecolor{Blue}{rgb}{0.0,1.0,1.0}
\definecolor{BlueD}{rgb}{0.63,0.79,0.95}
\definecolor{vert}{rgb}{0,0.45,0}
\definecolor{bazaar}{rgb}{0.6, 0.47, 0.48}
\definecolor{bronze}{rgb}{0.8, 0.5, 0.2}
\definecolor{darkspringgreen}{rgb}{0.09, 0.45, 0.27}
\def\Pr{\mathcal{P}}
\def\Yr{\mathcal{Y}}
\def\blfootnote{\xdef\@thefnmark{}\@footnotetext}
\def\R{\mathbf{R}}
\def\P{\mathbf{P}}
\def\T{\mathbf{T}}
\def\Q{\mathbf{Q}}
\begin{document}
\thispagestyle{empty}

\begin{center}

\begin{doublespace}
\begin{huge}
{\scshape A super Robinson--Schensted--Knuth correspondence  with symmetry and the super  Littlewood--Richardson rule}
\end{huge}

\bigskip
\hrule height 1.5pt 
\bigskip

\begin{Large}
{\scshape Nohra Hage }
\end{Large}

\end{doublespace}

\vspace{1cm} 

\begin{small}\begin{minipage}{14cm}
\noindent\textbf{Abstract -- } 
The Robinson--Schensted--Knuth (RSK) correspondence is a bijective correspondence between two-rowed arrays of non-negative integers and pairs of same-shape semistandard  tableaux.
This correspondence satisfies the symmetry property, that is, exchanging the rows of a two-rowed array is equivalent to exchanging the positions of the corresponding  pair of semistandard tableaux.
In this article, we introduce a super-analogue of the RSK correspondence for super tableaux over a signed alphabet using a super version of Schensted's insertion algorithms. 
We give a geometrical interpretation of the super-RSK correspondence via a matrix-ball construction, showing the symmetry property in complete generality.
Finally, we deduce a combinatorial version of the super Littlewood--Richardson rule for super Schur functions over a finite signed alphabet.

\medskip

\smallskip\noindent\textbf{Keywords --}  super jeu de taquin, super Young tableaux,  super plactic monoids, super-RSK correspondences,  super Littlewood--Richardson rule.

\medskip

\smallskip\noindent\textbf{M.S.C. 2010 -- Primary:} 05E99. \textbf{Secondary:} 20M99, 05A19, 	16T99.
\end{minipage}\end{small}
\end{center}

\begin{small}\begin{minipage}{12cm}
\renewcommand{\contentsname}{}
\setcounter{tocdepth}{1}
\tableofcontents
\end{minipage}
\end{small}

\tikzset{every tree node/.style={minimum width=1em,draw,circle},
         blank/.style={draw=none},
         edge from parent/.style=
         {draw,edge from parent path={(\tikzparentnode) -- (\tikzchildnode)}},
         level distance=0.8cm}
\vspace{0.5cm}

\section{Introduction}
Schensted introduced in~\cite {Schensted61} a bijection between permutations over the totally ordered alphabet~$[n]:=\{1<\ldots<n\}$ and pairs of same-shape standard Young tableaux over~$[n]$ in order to compute the length of the longest decreasing  subsequence of a given permutation over~$[n]$. 
This correspondence is described using Schensted's insertion procedure that constructs a first standard Young tableau by successively inserting the elements of the given permutation according to a specific rule, while the second standard Young tableau records the evolution of the shape during the insertion. This correspondence had been also described, in a rather different form, much earlier by Robinson in~\cite{Robinson38} in an attempt to give a first correct proof of the Littlewood--Richardson rule that provides an explicit combinatorial description for expressing a skew Schur function or a product of two Schur functions as a linear combination of Schur functions. This correspondence is then referred to as the \emph{Robinson--Schensted (RS) correspondence}.
Knuth generalized in~\cite{Knuth70} the RS correspondence to a bijection between two-rowed arrays of elements of~$[n]$ and pairs of same-shape semistandard Young tableaux over~$[n]$. 
Knuth's bijection is also described using Schensted's insertion algorithm that constructs a first semistandard Young tableau by successively inserting the elements of the second row of the given two-rowed array from left to right, while the second semistandard Young tableau records the evolution of the shape during this insertion using the elements of the first row of the two-rowed array.  
This bijection is then known as the \emph{Robinson--Schensted--Knuth (RSK) correspondence}.
An essential property of this correspondence is that it satisfies the \emph{symmetry property}, that is, under the RSK correspondence, exchanging the rows of a two-rowed array is equivalent to exchanging the positions of the corresponding  pair of semistandard Young tableaux,~\cite{Knuth70}. This property is also proved by Viennot in~\cite{Viennot77} for the  RS correspondence, and by Fulton in~\cite{Fulton97} for the RSK correspondence, using geometrical interpretations of these correspondences.  Since then, the RSK correspondence has found rich applications in representation theory, algebraic combinatorics and probabilistic combinatorics,~\cite{Kashiwara91, Fulton97, Leeuwen01, OConnell03}, and has found many generalizations on other structures of tableaux,~\cite{SaganStanley90, ShimozonoWhite01, Leeuwen05, BereleRemmel85, BereleRegev87, BonettiSenatoVenezia98, Muth19}.

Superalgebraic structures have served as powerful combinatorial tools in the study of invariant theory and representation theory of superalgebras, as well as in the theory of algebras satisfying polynomial identities,~\cite{BonettiSenatoVenezia98,GrosshansRotaStein87, BereleRemmel85,BereleRegev87,BenkartKangKashiwara20}.
Bonetti, Senato and Venezia introduced in~\cite{BonettiSenatoVenezia98} a super-RSK correspondence on super Young tableaux over a signed alphabet using super Schensted's right and left insertion. However, the symmetry property holds only in special cases under this bijection as shown in~\cite{LaScalaNardozzaSenato06}. A question was to find a super-RSK correspondence
satisfying the symmetry property in complete generality and leading to the classical RSK correspondence as a particular case.
Muth  introduced in~\cite{Muth19} a super-RSK correspondence using Haiman's mixed insertion algorithm on super Young tableaux and proved that this correspondence satisfies the symmetry property in complete generality. However, this correspondence is not related to the super plactic monoid of type A and to the representations of the general linear Lie superalgebra, and  then it does not yield  a combinatorial description of the super Littlewood--Richardson rule. It is worth noting that Haiman's mixed insertion algorithm is used to define the shifted plactic monoid and allows to give a combinatorial version of the shifted Littlewood--Richardson rule for shifted tableaux,~\cite{Serrano10}.
We introduce in this article a super version of the RSK correspondence on super Young tableaux that satisfies the symmetry property in complete generality, using super Schensted's insertion algorithms and  the super plactic monoid of type A.
We deduce combinatorial descriptions of the super Littlewood--Richardson rule for super Young tableaux over a finite signed alphabet.

\subsubsection*{Super plactic monoids, insertion and taquin}
A \emph{signed alphabet} is a finite or countable totally ordered set~$\Si$ which is the disjoint union of two subsets~$\Si_0$ and~$\Si_1$. 
The \emph{super plactic monoid}~$\P(\Si)$ over a signed alphabet~$\Si$,  is the quotient of the free monoid~$\Si^\ast$ over~$\Si$ by the congruence relation~$\sim_{\P(\Si)}$ generated by the following family of \emph{super Knuth relations},~\cite{LaScalaNardozzaSenato06}:
\[
\begin{array}{rl}
xzy=zxy,\;\text{ with }\; x=y \;\text{ only if }\; y\in \Si_0 \;\text{ and } \; y=z\; \text{ only if }\; y \in \Si_1,\\
yxz=yzx,\;\text{ with }\; x=y \;\text{ only if }\;  y\in \Si_1\;\text{ and } \; y=z\; \text{ only if }\; y\in \Si_0,
\end{array}
\]
for any~$x\leq y\leq z$ of elements of~$\Si$. 
When~$\Si=\Si_0=[n]$, we recover the \emph{plactic monoid of type~A} introduced  by Lascoux and Sch\"{u}tzenberger in~\cite{LascouxSchutsenberger81}, following the works of Schensted,~\cite{Schensted61}, and Knuth,~\cite{Knuth70}, on the RSK correspondence.  Plactic monoids have found several applications in algebraic combinatorics, representation theory, probabilistic combinatorics and rewriting theory,~\cite{Lothaire02,Fulton97,OConnell03, KubatOkninski14, BokutChenChenLi15, CainGrayMalheiro15, CainGrayMalheiro19, HageMalbos17, HageMalbos22, Hage15, Hage2021Super}. 
When~$\Si=\{\overline{m}<\ldots <\overline{1}<1<\ldots<n\}$, we recover the reverse of the super Knuth relations obtained in~\cite{BenkartKangKashiwara20} where the super plactic congruence is described using Kashiwara's theory of crystal bases for the representations of the general linear Lie superalgebra~$\mathfrak{gl}_{m,n}$.
It is worth noting that the super plactic monoid also appeared  in~\cite{LodayPovov08} in the study of  the parastatistics algebra.

A \emph{partition} of a positive integer~$n$ is a weakly decreasing sequence~$\lambda=(\lambda_1,\ldots,\lambda_k)\in\Zb_{>0}^k$ such that~$ \sum\lambda_i = n$. 
The \emph{Young diagram of a partition}~$\lambda=(\lambda_1,\ldots,\lambda_k)$ is the set~$\Yr(\lambda)$ of pairs~$(i,j)$ such that~$1\leq i\leq k$ and~$1\leq j\leq \lambda_i$,
that can be represented by a diagram by drawing a box for each pair~$(i,j)$. The transposed diagram~$\{(j,i)\;\big|\; (i,j)\in\Yr(\lambda)\}$ defines another partition~$\widetilde{\lambda}$,  called the \emph{conjugate partition} of~$\lambda$.
A  \emph{super tableau of shape}~$\lambda$ over~$\Si$ is a Young diagram of~$\lambda$ filled with elements of~$\Si$ such that the entries in each row are weakly increasing  allowing the repetition only of elements in~$\Si_0$ and the entries in each column are weakly increasing allowing the repetition only of elements in~$\Si_1$.
Denote by~$\YoungT(\Si)$  the set of all super tableaux over~$\Si$.  Note that, when~$\Si=\Si_0$ and~$\Si=\Si_1$, we recover the notion of row-strict and column-strict tableaux, respectively,~\cite{Fulton97}. 
Denote by~$\R:\YoungT(\Si)\to \Si^\ast$ the \emph{column reading map} that reads a super tableau column-wise from bottom to top and from left to right. 
Super versions of Schensted left and right insertion algorithms are  introduced in~\cite{LaScalaNardozzaSenato06} on super tableaux, and consist in inserting elements of~$\Si$ into super  tableaux  by rows and columns, respectively.  For any word $w=x_1\ldots x_k$  over~$\Si$, a super tableau~$\T(w)$ is computed  by inserting the letters of~$w$ iteratively from left to right using the right insertion~$\insr{}$, and starting from the empty super tableau:
\[
\T(w) 
\;
:=
\;
(\emptyset \insr{} w)
\;
=
\;((\ldots(\emptyset \insr{} x_1)  \insr{} \ldots )\insr{} x_k).
\]
Two words over~$\Si$ are equivalent with respect to~$\sim_{\P(\Si)}$ if and only if they lead to the same super tableau after insertion,~\cite{LaScalaNardozzaSenato06}. This is the \emph{cross-section property} of super tableaux with respect to~$\sim_{\P(\Si)}$. We deduce that the internal product~$\star_{\YoungT(\Si)}$ defined on $\YoungT(\Si)$ by~$t \star_{\YoungT(\Si)} t' :=  (t\insr{} \R(t'))$, 
for all~$t$ and~$t'$ in~$\YoungT(\Si)$, is associative, and
the monoids~$(\YoungT(\Si), \star_{\YoungT(\Si)})$ and~$\P(\Si)$ are isomorphic.

Let~$\lambda$ and~$\mu$ be partitions such that~$\Yr(\mu)$ is contained in~$\Yr(\lambda)$.
A  \emph{super skew tableau of shape}~$\lambda/\mu$  over~$\Si$ is a Young diagram of the following form
\[
\Yr(\lambda/\mu):=\big\{(i,j)\;\big|\; 1\leq i\leq k,\; \mu_i< j\leq \lambda_i \big\}
\]
filled with elements of~$\Si$ such that the entries in each row are weakly increasing  allowing the repetition only of elements in~$\Si_0$ and the entries in each column are weakly increasing  allowing the repetition only of elements in~$\Si_1$.
An \emph{inner corner} of a super skew tableau of shape~$\lambda/\mu$  is a box in~$\Yr(\mu)$ such that the boxes below and to the right are not in~$\Yr(\mu)$, and an \emph{outer corner} is a box such that neither box below or to the right is in~$\Yr(\lambda/\mu)$. 
Sch\"{u}tzenberger introduced in~\cite{Schutzenberger63} the \emph{jeu de taquin procedure} on Young tableaux in order to give a  proof of the Littlewood--Richardson rule using the properties of the plactic monoid of type~A.
We introduced in~\cite{Hage2021Super} the \emph{super jeu taquin procedure}  on super tableaux that consists in moving inner corners from a super skew tableau into outer corners by keeping the rows and the columns weakly increasing until no more inner corners remain in the initial super skew tableau.  We proved that the \emph{rectification} of a super skew tableau~$S$ by the super jeu de taquin is the unique super  tableau whose reading is equivalent to the one of~$S$ with respect to~$\sim_{\P(\Si)}$.
We deduced that the resulting super  tableau does not depend on the order in which we choose inner corners. We also related the super jeu de taquin to the insertion algorithms and we showed how we can insert a super tableau into another one by taquin. This interpretation of the insertion product~$\star_{\YoungT(\Si)}$ by taquin allows us to give in Section~\ref{S:SuperLittelwoodRichardsonRule} a combinatorial description of the super Littlewood--Richardson coefficient. 
Moreover, we introduced in~\cite{Hage2021Super} the super analogue of the \emph{Sch\"{u}tzenberger's  evacuation procedure} which transforms a super tableau~$t$ over a signed alphabet~$\Si$ into an \emph{opposite tableau}~$t^{\mathrm{op}}$, over the opposite alphabet obtained from~$\Si$ by reversing its order. We showed that the super tableaux~$t$ and~$t^{\mathrm{op}}$ have the same shape and that the map~$t\mapsto  t^{\mathrm{op}}$ is an involution on~$\YoungT(\Si)$ that is  compatible with~$\sim_{\P(\Si)}$. 
The super evacuation procedure  allows us to construct in Subsection~\ref{SS:DualRSKCorrespondence} a dual version of the super-RSK correspondence using the left insertion algorithm on super tableaux.

\section*{ Organization and main results of the article}

We begin by recalling in Section~\ref{S:SuperPlacticMonoid} the notions of super Young tableaux, the super plactic monoid and the super jeu de taquin from~\cite{LaScalaNardozzaSenato06, Hage2021Super}.
Moreover, we give a super version of the Robinson--Schensted correspondence for super tableaux over a signed alphabet.

\subsection*{A super-RSK correspondence with symmetry}
We  introduce in Subsection~\ref{SS:SuperRSKCorrespondence} a super version of the RSK correspondence over a signed alphabet. 
Let~$\Si$ and~$\Si'$ be signed alphabets. 
A \emph{signed two-rowed array}~$w$ on~$\Si$ and~$\Si'$ is a $2\times k$ matrix 
\[
w
\;:=\;
\begin{pmatrix}
 x_1&\ldots &x_k\\
 y_1&\ldots &y_k
\end{pmatrix}
\]
with~$x_i$ in~$\Si$ and~$y_i$ in~$\Si'$, for all~$i=1,\ldots, k$, that satisfies Conditions~\eqref{E:Order1} and~\eqref{E:Order2}.
Starting from a signed two-rowed array~$w$ on~$\Si$ and~$\Si'$, Algorithm~\ref{A:RSkalgo} computes a pair of same-shape super tableaux~$(\T(w), \Q(w))$  whose entries are the
ones of the second and the first row of~$w$, respectively.
More precisely, the super tableau~$\T(w)$ is equal to~$\T(y_1\dots y_k)$, and~$\Q(w)$ is the super tableau obtained by successively adding~$x_1,\ldots, x_k$  in the same places as the boxes added when computing~$\T(w)$ starting from an empty super tableau. Moreover, Algorithm~\ref{A:ReverseRSKCorrespodence}
allows us to recover the initial signed two-rowed array~$w$ starting from the same-shape pair of super tableaux~$(\T(w), \Q(w))$. This sets up a one-to-one correspondence between signed two-rowed arrays  and pairs of same-shape super tableaux on~$\Si$ and~$\Si'$, that we denote by
\[
\RSK: w \mapsto (\T(w),\Q(w)).
\]
Hence, we obtain the following first main result of the article:
 
 \begin{quote}
{\bf Theorem \ref{T:SuperRSKCorrespondence}[Super-RSK correspondence].}
\emph{
Let $\Si$ and~$\Si'$ be  signed alphabets. The map~$\RSK$ defines a  one-to-one correspondence  between signed two-rowed arrays and pairs of super
 tableaux on~$\Si$ and~$\Si'$, such that for any signed two-rowed array~$w$ on~$\Si$ and~$\Si'$, we have that $\T(w)$ and~$\Q(w)$ are same-shape super tableaux whose entries are the
ones of the second and the first row of~$w$, respectively.}
\end{quote}
 
This is the super version of the Robinson--Schensted--Knuth correspondence between two-rowed arrays of non-negative integers and pairs of same-shape semistandard tableaux over~$[n]$,~\cite{Knuth70, Fulton97}. More precisely,  when~$\Si=\Si'=\Si_0=\Si'_0=[n]$, we recover the classical RSK correspondence on Young tableaux of type A. 
It is worth noting that, in the particular case, when all the elements of~$\Si_0$  (resp.~$\Si'_0$) are strictly smaller than the ones of~$\Si_1$ (resp.~$\Si'_1$), we recover the super-RSK correspondence on super tableaux introduced by Berele and Remmel  in~\cite{BereleRemmel85}.

Using a matrix-ball construction, we show in Subsection~\ref{SS:SuperMatrixBallConstruction} that  the super-RSK correspondence satisfies the symmetry property in complete generality. 
Let~$w$ be a signed two-rowed array on signed alphabets~$\Si$ and~$\Si'$.
The \emph{inverse} of~$w$, denoted by~$w^{\text{inv}}$, is the  signed two-rowed array on~$\Si'$ and~$\Si$
obtained from~$w$ by \emph{exchanging} the rows of~$w$, and by sorting the new couples on~$\Si'\times\Si$ according to Conditions~\eqref{E:Order1} and~\eqref{E:Order2}. 
The signed two-rowed array~$w$ has \emph{symmetry with respect to the map}~$\RSK$  if it satisfies the following property:
\[
\text{if}~\RSK(w)= (\T(w),\Q(w)) \quad \text{ then } \quad  \RSK(w^{\text{inv}}) = (\Q(w),\T(w)).
\]
A \emph{signed ball array} on signed alphabets~$\Si$ and~$\Si'$ is a rectangular array of balls filled with positive integers, whose rows and columns are indexed with elements of~$\Si$ and~$\Si'$, respectively, allowing the repetition only of elements in~$\Si_1$ and~$\Si'_1$, respectively, and where many balls can occur in the same position. 
We  show in~\ref{SSS:MatrixBallConstruction} how to associate 
to each signed two-rowed array~$w$  on~$\Si$ and~$\Si'$,  a signed ball array, denoted by~$\Ba(w)$, whose rows and columns are indexed with the elements of the first and second row of~$w$, respectively, from the smaller to the bigger one,  and where only the indices from~$\Si_1$ and~$\Si'_1$ are repeated as many times as they appear in~$w$. 
First, we  associate to each couple in~$w$  a ball in an empty signed ball array and then we order and number these balls with positive integers. Secondly, we add new balls to the given signed ball array, we order and number them and we repeat the same procedure until no more balls can be added. This geometrical construction allows us to prove the symmetry property using the symmetry of the resulting signed ball array~$\Ba(w)$. More precisely, we denote by~$\T(\Ba(w))$ and~$\Q(\Ba(w))$  the super tableau obtained from~$\Ba(w)$ whose $k$-th row lists the indices of the leftmost columns and top-most rows, respectively, where each integer number occurs in the new added balls.
Proposition~\ref{P:MatrixBallConstruction} shows that~$(\T(\Ba(w)), \Q(\Ba(w))) =(\T(w), \Q(w))$. 
Since the matrix-ball construction is symmetric in the rows and columns of the resulting signed ball array, we deduce the following result:

 \begin{quote}
{\bf Theorem \ref{T:SymmetryRSK}[Symmetry of the super-RSK correspondence].}
\emph{
Let~$\Si$ and~$\Si'$ be signed alphabets. All signed two-rowed arrays on~$\Si$ and~$\Si'$ have symmetry with respect to the super-RSK correspondence map~$\RSK$.
}
\end{quote}
 
We end Subsection~\ref{SS:DualRSKCorrespondence} by giving a  dual way to construct the pair of super tableaux corresponding to a signed two-rowed array with respect to the map~$\RSK$ using the left insertion algorithm and the super evacuation procedure on super tableaux; this construction extends the classical dual RSK correspondence,~\cite{Knuth70, Fulton97}, to the signed setting.

\subsection*{The super Littlewood--Richardson rule}
 
 We give  in Section~\ref{S:SuperLittelwoodRichardsonRule} a combinatorial interpretation of the super Littlewood--Richardson rule using the super-RSK correspondence. Let~$\lambda$,~$\mu$ and~$\nu$ be partitions such that~$\Yr(\lambda)$ is contained in~$\Yr(\nu)$ and let~$\Si$ be a finite signed alphabet. We show that the number~$c_{\lambda,\mu}^{\nu}$ of ways a given super tableau~$t$ of shape~$\nu$ over~$\Si$ can be written as the product of a super tableau~$t'$ of shape~$\lambda$ and a super tableau~$t''$ of shape~$\mu$ over~$\Si$, does not depend on~$t$, and depends only on the partitions~$\lambda$,~$\mu$  and~$\nu$. Moreover, we show that~$c_{\lambda,\mu}^{\nu}$ is  equal to the number of super skew tableaux of shape~$\nu/\lambda$  whose rectification is a given tableau of shape~$\mu$. Hence, we obtain the following result:

  \begin{quote}
{\bf Theorem \ref{T:SuperLittlewoodRichardsonRule}[The super Littlewood--Richardson rule].}
\emph{
Let~$\lambda$,~$\mu$ and~$\nu$ be partitions such that~$\Yr(\lambda)$ is contained in~$\Yr(\nu)$.  The following identities
\[
S_{\lambda} S_{\mu}\;=\; \underset{\nu}{\sum} c_{\lambda,\mu}^{\nu} S_{\nu}
\qquad
\text{ and }
\qquad
S_{\nu/\lambda}\;=\; \underset{\mu}{\sum} c_{\lambda,\mu}^{\nu} S_{\mu}
\]
hold in the tableau~$\mathbb{Z}$-algebra rising from~$\P(\Si)$, where~$S_{\nu/\lambda}$, $S_{\lambda}$,~$S_{\mu}$ and~$S_{\nu}$ denote respectively the formal sum of all super tableaux of shape~$\nu/\lambda$,~$\lambda$,~$\mu$ and~$\nu$  over a finite signed alphabet~$\Si$.
}
\end{quote}
  
In addition to the main results of this article, we briefly recall in~\ref{SS:Yamanouchi} and~\ref{SS:MatrixVersion}  two related developments from our earlier works. In~\cite{Hage25}, we established a super analogue of the Littlewood--Richardson rule by introducing \emph{Littlewood--Richardson tableaux} in the signed setting and constructing a bijection that realizes the corresponding structure constants as the number of such tableaux of prescribed shape and weight. In~\cite{Hage26}, we developed a matrix interpretation of signed two-rowed arrays, leading to bijective proofs of \emph{super Cauchy identities} via the super-RSK correspondence. In this framework, parity-dependent restrictions on matrix entries naturally reflect the signed structure of the alphabets, while matrix transposition accounts for the symmetry of the correspondence. Together, these results provide combinatorial models for super Schur functions that extend classical constructions and connect naturally with the theory developed in the present paper.

\subsection*{Notation}
We denote by $\mathcal{A}^\ast$ the free monoid of \emph{words} over a totally ordered alphabet~$\mathcal{A}$, the product being concatenation of words and the identity being the empty word, and by~$\# \mathcal{A}$ the cardinal number of~$\mathcal{A}$ when it is finite. 
We denote by~$[n]$ the ordered set~$\{1<2<\ldots<n\}$ for $n$ in $\Zb_{>0}$.
Let $\Si$ be a finite or countable totally ordered set and~$||.||:\Si\to \mathbb{Z}_2$ be any map, where $\mathbb{Z}_2=\{0,1\}$ denotes the additive cyclic group of order $2$. We call the ordered pair~$(\Si, ||.||)$  a \emph{signed alphabet} and the map~$||.||$ the \emph{signature map} of~$\Si$. We denote~$\Si_0 =\{a\in \Si \;\big|\; ||a||=0\}$ and~$\Si_1 =\{a\in \Si \;\big|\; ||a||=1\}$.  
A monoid~$\M$ is a \emph{$\Zb_2$-graded monoid} or a \emph{supermonoid} if a map~$||.||:\M\to \Zb_2$ is given such that $||u.v||= ||u||+||v||$, for all~$u, v$ in~$\M$. We call $||u||$ the \emph{$\Zb_2$-degree} of~$u$. The free monoid~$\Si^\ast$ over~$\Si$ is  $\Zb_2$-graded  by considering~$||w||:=||x_1||+\ldots+||x_k||$, for any~$w=x_1\ldots x_k$ in~$\Si^\ast$. In the rest of this article, and if there is no possible confusion,~$\Si$ denotes a signed alphabet.

\section{The super plactic monoid of type A}
\label{S:SuperPlacticMonoid}

In this section, we recall the notions of super Young tableaux, the super plactic monoid and the super jeu de taquin from~\cite{LaScalaNardozzaSenato06, Hage2021Super}.
Moreover, we give a super version of the Robinson--Schensted correspondence for super tableaux over a signed alphabet.

\subsection{Super Young tableaux}
\label{SS:SuperYoungTableaux}
Let~$n$ be a positive integer.
A \emph{partition} of~$n$ is a weakly decreasing sequence~$\lambda=(\lambda_1,\ldots,\lambda_k)$ in~$\Zb_{>0}^k$ such that~$\sum\lambda_i = n$. We call the integer~$k$ \emph{number of parts} or \emph{height} of~$\lambda$. We denote by~$\Pr_n$ the set of partitions of~$n$ and we set~$\Pr=\underset{n\in\Zb_{>0}}{\bigcup\Pr_n}$.
The \emph{(Ferrers)--Young diagram} of a partition~$\lambda= (\lambda_1,\ldots, \lambda_k)$ of~$n$,  denoted by~$\Yr(\lambda)$, is the set of pairs~$(i,j)$ such that~$1\leq i\leq k$ and~$1\leq j\leq \lambda_i$, that can be represented by a diagram by drawing a box for each pair~$(i,j)$. For instance, the Young diagram~$\Yr((4,3,1,1))$ is represented by the following diagram:
\[
\ytableausetup{mathmode, boxsize=1em}
\begin{ytableau}
\empty& \empty&\empty&\empty \\
\empty&\empty&\empty\\
\empty&\none\\
\empty&\none
\end{ytableau}.
\]
Let~$\Si$ be a signed alphabet.
For a partition~$\lambda$ in~$\Pr$, its \emph{conjugate partition}~$\widetilde{\lambda}$, is the partition corresponding to the transposed diagram~$\{(j,i)\;\big|\; (i,j)\in\Yr(\lambda)\}$, whose parts are the number of boxes of the columns of~$\Yr(\lambda)$.
A \emph{super semistandard Young tableau}, or \emph{super tableau} for short, of \emph{shape}~$\lambda$ over~$\Si$, is a pair~$t:=(\lambda,\Tc)$, where~$\Tc:\Yr(\lambda)\to\Si$ is a map satisfying the following conditions:
\begin{equation}
\label{Eq:SuperTableaux}
\begin{array}{rl}
\Tc(i,j)\leq \Tc(i,j+1),& \text{ with }~\Tc(i,j)= \Tc(i,j+1) \text{ only if }~||\Tc(i,j)|| = 0,\\
\Tc(i,j)\leq \Tc(i+1,j),& \text{ with }~\Tc(i,j)= \Tc(i+1,j) \text{ only if }~||\Tc(i,j)|| = 1.
\end{array}
\end{equation}
We call~$\Yr(\lambda)$ and~$\Tc$  the \emph{frame} and the \emph{filling} of~$t$, respectively.
We call~$t$ a \emph{standard tableau} over~$\Si$ if~$\Tc$ is injective. We denote by~$\emptyset$ the empty super tableau, by~$\YoungST(\Si)$ the set of all standard tableaux, and by~$\YoungT(\Si)$ (resp.~$\YoungT(\Si,\lambda)$) the set of all super tableaux (resp. of shape~$\lambda$) over~$\Si$.
 
Let~$\lambda$ and~$\mu$ be in~$\Pr$ of heights~$k$ and~$l$, respectively, such that~$l\leq k$. We denote~$\mu\subseteq\lambda$ if~$\mu_i\leq \lambda_i$ for any~$i$, that is, the Young diagram of~$\mu$ is contained in that of~$\lambda$, and we call the following set
\[
\Yr(\lambda/\mu)\;:=\;\big\{(i,j)\;\big|\; 1\leq i\leq k,\; \mu_i<j\leq \lambda_i \big\}
\]
a \emph{skew diagram}~$\lambda/\mu$ or a \emph{skew shape}. We denote~$\lambda/0 :=\lambda$ the skew shape corresponding to the Young diagram~$\Yr(\lambda)$.
Let~$\lambda/\mu$ be a skew shape.
A \emph{super semistandard skew tableau}, or \emph{super skew tableau} for short, of \emph{shape}~$\lambda/\mu$ over~$\Si$, is a pair~$S:=(\lambda/\mu,\U)$, where~$\U: \Yr(\lambda/\mu)\to \Si$ is a map satisfying the conditions~\eqref{Eq:SuperTableaux}. We call~$\Yr(\lambda/\mu)$ and~$\U$ the \emph{frame} and the \emph{filling} of~$S$, respectively.
We denote by~$\SkewT(\Si)$ (resp.~$\SkewT(\Si,\lambda/\mu)$)  the set of all super skew tableaux (resp. of shape~$\lambda/\mu$) over~$\Si$. 
Note that we will identify the set~$\YoungT(\Si,\lambda)$ with the set~$\SkewT(\Si,\lambda/0)$.
Let~$S$ be in~$\SkewT(\Si,\lambda/\mu)$.
If~$S$ is not a super tableau, then it has at least an \emph{inner corner}, that is, a box in~$\Yr(\mu)$ such that the boxes below and to the right are not in~$\Yr(\mu)$. An \emph{outer corner} of~$S$ is a box such that neither box below or to the right is in~$\Yr(\lambda/\mu)$. Note that, in some cases, inner corners of~$S$ are also outer corners of it.
We denote by~$\R:\SkewT(\Si)\to \Si^\ast$ the \emph{column reading map} that reads a super tableau column-wise from bottom to top and from left to right. 

\begin{example}
Consider the alphabet~$\Si= \Zb_{>0}$  with signature given by $\Si_0$  the set of even numbers and~$\Si_1$  defined consequently. 
The following diagram is a super tableau of shape~$(3,2,2,1)$ over~$\Si$:
\begin{equation}
\label{Ex:tableauExample}
\raisebox{-0.45cm}{$t\;
=
\;$}
\ytableausetup{mathmode, boxsize=1em}
\begin{ytableau}
1&2&3\\
1&3\\
2&4\\
4
\end{ytableau}
\qquad
\raisebox{-0.45cm}{$\text{ with }~\R(t)=42114323.$}
\end{equation}
The following diagram is a super skew tableau of shape~$(6,4,2,2,1,1)/(3,2,2,1)$ over~$\Si$ :
\begin{equation}
\label{Ex:SkewtableauExample}
\raisebox{-0.45cm}{$S\;
=
\;$}
\ytableausetup{mathmode, boxsize=1em}
\begin{ytableau}
\empty&\empty&\empty*(BlueD)&1&2&3\\
\empty&\empty&1& *(BlueD) 3\\
\empty&\empty\\
\empty&4\\
2\\
4
\end{ytableau}
\qquad
\raisebox{-0.45cm}{$\text{ with }~\R(S)=42413123,$}
\end{equation}
where the empty blue box and the blue one filled with~$3$ denote respectively an inner corner and an outer corner of~$S$.
\end{example}

\subsection{A super Robinson--Schensted correspondence}
\label{SS:SuperRSCorrespondence}
From a super tableau we can obtain a word over~$\Si$ by taking its column reading. The following algorithm will allow us to invert this process and obtain a super tableau from any word over~$\Si$.

\begin{Algorithm}[\cite{LaScalaNardozzaSenato06}]
\label{A:RightInsertion}
The \emph{right (or row) insertion}, denoted by~$\insr{}$, inserts an element~$x$ in~$\Si$ into a super tableau~$t$ of~$\YoungT(\Si)$ as follows:
~\par\nobreak
\emph{Input:} A super tableau $t$ and a letter $x \in \Si$.

\emph{Output:} A super tableau $t \insr{} x$.

\emph{Method:} If $t$ is empty, create a box and fill it with~$x$. Suppose~$t$ is 
non-empty. If~$x\in\Si_0$ (resp.~$x\in\Si_1$) is at least as large as (resp. larger than) the last element of the top row of $t$, then put~$x$ in a box to the right of this row; 
otherwise let~$y$ be the smallest element of the top row of~$t$ such that~$y> x$ (resp.~$y\geq x$). Then replace~$y$ by~$x$ in this row and recursively insert~$y$ into the super tableau formed by the rows of~$t$ below the topmost. Note that this recursion may end with an insertion into an empty row below the existing rows of $t$.
Output the resulting super tableau.
\end{Algorithm}

For any word $w=x_1\ldots x_k$  over~$\Si$,  a super tableau~$\T(w)$ is computed from~$w$ by inserting its letters iteratively from left to right  starting from the empty super tableau, as follows:
\[
\T(w) 
\;
:=
\;
(\emptyset \insr{} w)
\;
=
\;((\ldots(\emptyset \insr{} x_1)  \insr{} \ldots )\insr{} x_k).
\]
Note that, for any~$t$ in~$\YoungT(\Si)$, the equality~$\T(\R(t)) = t$ holds,~\cite{LaScalaNardozzaSenato06}.

\begin{lemma}[Row-bumping lemma,~\cite{LaScalaNardozzaSenato06}]
\label{L:RowBumpingLemma}
Let~$t$ be in~$\YoungT(\Si)$ and~$x, x'$ be in~$\Si$. Consider the position~$(i,j)$ (resp.~$(i',j')$)   of the new box added to the frame of~$t$ (resp.~$(t\insr{}x)$) after computing~$(t\insr{}x)$ (resp.~$((t\insr{}x)\insr{}x')$).
The following conditions are equivalent:
\begin{enumerate}[\bf i)]
\item $x\leq x'$, with~$x=x'$ only if~$||x|| = 0$,
\item $j<j'$ and~$i\geq i'$.
\end{enumerate}
\end{lemma}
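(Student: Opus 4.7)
The plan is to prove the lemma by induction on the row in the cascading bump, with base case a one-row version of the statement. This one-row lemma reads: given a weakly increasing sequence $r$ satisfying the super row condition, and elements $a, a' \in \Si$ with $a \leq a'$ and $a = a'$ only if $||a|| = 0$, if we insert $a$ at position $p$ of $r$ and then $a'$ at position $p'$ of the resulting row, then $p < p'$; if both insertions bump resident entries $b$ and $b'$, these satisfy $b \leq b'$ with equality only if $||b||=0$; and if $a$ is merely appended to $r$, then so is $a'$.

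I would prove the one-row lemma by direct case analysis on the parities of $a$ and $a'$ using the two branches of Algorithm~\ref{A:RightInsertion}. In each case, the entries of $r$ strictly to the left of position $p$ are bounded above by $a$, weakly when $||a||=0$ and strictly when $||a||=1$, hence similarly bounded above by $a'$, so $a'$ cannot bump at or before position $p$; this forces $p < p'$. Since positions of $r$ strictly to the right of $p$ are unaffected by the first insertion, $b'$ equals the original entry of $r$ at position $p'$, and the super row condition within $r$ forces any equality $b = b'$ to make all intermediate entries coincide, which in turn forces $||b||=0$. The appended clause follows directly from the assumed row condition on $(a, a')$: either branch of the insertion algorithm legitimately appends $a'$ behind $a$.

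For the full lemma, encode the bumping path of $x$ as $(1, j_1), \ldots, (i, j_i) = (i, j)$ with $y_k$ the element bumped from row $k$ into row $k+1$ and $y_0 := x$, and analogously for $x'$. The key observation is that row $k+1$ of $t \insr{} x$ equals row $k+1$ of $t$ after the single insertion of $y_k$, so inserting $y'_k$ into row $k+1$ of $t \insr{} x$ is precisely the two-step scenario of the one-row lemma with $(a, a') = (y_k, y'_k)$. Starting from $(y_0, y'_0) = (x, x')$ and iterating down the cascade produces $j_k < j'_k$ at every row where both paths are defined and propagates the super row condition onto $(y_k, y'_k)$. For the length inequality $i \geq i'$, I invoke the standard fact, easily verified in the super setting from the super column condition on $t$, that columns along a single bumping path are weakly decreasing, $j_1 \geq j_2 \geq \ldots \geq j_i$; combined with the appended-case clause at row $i'$, this yields $j \leq j_{i'} < j'_{i'} = j'$ and prevents $x'$'s cascade from going deeper than row $i$.

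The converse implication is handled by contrapositive: if instead $x > x'$ or $x = x' \in \Si_1$, a symmetric one-row analysis of Algorithm~\ref{A:RightInsertion} produces $p \geq p'$ at the top row together with a strictly deeper cascade for $x'$, contradicting either $j < j'$ or $i \geq i'$. The main technical obstacle is the parity bookkeeping across the four combinations of $||a||$ and $||a'||$ in the one-row base case, but once this is dispatched, the row-by-row induction is entirely mechanical.
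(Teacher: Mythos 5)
The paper states this lemma with a citation to La Scala--Nardozza--Senato and supplies no proof of its own, so there is nothing internal to compare against; your argument is the standard bumping-route proof of the row-bumping lemma (as in Fulton), correctly adapted to the signed parity conditions, and it is sound. The one place that deserves to be written out rather than deferred is the converse: you must check that the negation of (i) --- that is, $x'\leq x$ with $x'=x$ only if $||x'||=1$ --- propagates down the cascade in the \emph{same} form, i.e.\ that the bumped entries satisfy $b'\leq b$ with $b'=b$ only if $||b'||=1$. This does hold (the dangerous case $b'=b$ with $||b||=0$ and $p'<p$ cannot occur, because every entry strictly left of $p$ is bounded above by $a$, which is in turn strictly below $r_p=b$ in the relevant parity case, forcing $b'<b$ there; when $p'=p$ one gets $b'=a$ and the parity of $a'$ does the rest), but it is precisely the parity bookkeeping you wave at with ``symmetric one-row analysis,'' and it is the only step where a careless mirror image of the forward argument could fail.
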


\begin{Algorithm}
\label{A:RSalgo}
Let~$w$ be in~$\Si$.
When computing the super tableau~$\T(w)$,  a standard tableau~$\Q(w)$ over~$[n]$ can be also computed as follows:
~\par\nobreak
\emph{Input:} A word $x_1\ldots x_k$ over $\Si$.

\emph{Output:} A super tableau $\T(x_1\ldots x_k)$ over $\Si$ and a standard tableau~$\Q(x_1\ldots x_k)$ over $[n]$.

\emph{Method:} Start with an empty super tableau~$T_0$ and an empty  standard tableau~$Q_0$. For each $i = 1,\ldots, k$, compute~$T_{i -1}\insr{} x_i$ as per Algorithm~\ref{A:RightInsertion}; let $T_i$ be the resulting super tableau.
Add a box filled with~$i$ to the standard tableau~$Q_{i -1}$ in the same place
as the box that belongs to $T_i$ but not to~$T_{i -1}$; let $Q_i$ be the resulting
standard  tableau.
Output $T_k$ for $\T(x_1\ldots x_k)$ and $Q_k$ as $\Q(x_1\ldots x_k)$.
\end{Algorithm}

\begin{example}
For instance, consider the alphabet~$\Si=\{1,2,3,4,5,6\}$ with signature given by~$\Si_0=\{1,3,4\}$ and~$\Si_1$ defined consequently. The sequence of pairs produced during the computation of $\T(w)$ and~$\Q(w)$ starting from the word~$w=2421456356431541$ is
\[
\big(\emptyset
\;
,
\;
\emptyset\big),
\big(
{\ytableausetup{mathmode, boxsize=1em}
\begin{ytableau}
2
\end{ytableau}}
\;
,
\;
{\ytableausetup{mathmode, boxsize=1em}
\begin{ytableau}
1
\end{ytableau}}
\big), 
\big(
{\ytableausetup{mathmode, boxsize=1em}
\begin{ytableau}
2&4
\end{ytableau}}
\;
,
\;
{\ytableausetup{mathmode, boxsize=1em}
\begin{ytableau}
1&2
\end{ytableau}}
\big), 
\Big(\raisebox{0.1cm}{
{\ytableausetup{mathmode, boxsize=1em}
\begin{ytableau}
2&4\\
2
\end{ytableau}}}
\;
,
\;
\raisebox{0.1cm}{
{\ytableausetup{mathmode, boxsize=1em}
\begin{ytableau}
1&2\\
3
\end{ytableau}
}}\Big), 
\bigg(\raisebox{0.25cm}{
{\ytableausetup{mathmode, boxsize=1em}
\begin{ytableau}
1&4\\
2\\
2
\end{ytableau}}}
\;
,
\;
\raisebox{0.25cm}{
{\ytableausetup{mathmode, boxsize=1em}
\begin{ytableau}
1&2\\
3\\
4
\end{ytableau}}
}\bigg),
\bigg(\raisebox{0.25cm}{
{\ytableausetup{mathmode, boxsize=1em}
\begin{ytableau}
1&4&4\\
2\\
2
\end{ytableau}}}
\;
,
\;
\raisebox{0.25cm}{
{\ytableausetup{mathmode, boxsize=1em}
\begin{ytableau}
1&2&5\\
3\\
4
\end{ytableau}}
}\bigg),
\]
\[
\bigg(\raisebox{0.25cm}{
{\ytableausetup{mathmode, boxsize=1em}
\begin{ytableau}
1&4&4&5\\
2\\
2
\end{ytableau}}}
\;
,
\;
\raisebox{0.25cm}{
{\ytableausetup{mathmode, boxsize=1em}
\begin{ytableau}
1&2&5&6\\
3\\
4
\end{ytableau}}
}\bigg),
\bigg(\raisebox{0.25cm}{
{\ytableausetup{mathmode, boxsize=1em}
\begin{ytableau}
1&4&4&5&6\\
2\\
2
\end{ytableau}}}
\;
,
\;
\raisebox{0.25cm}{
{\ytableausetup{mathmode, boxsize=1em}
\begin{ytableau}
1&2&5&6&7\\
3\\
4
\end{ytableau}}
}\bigg),
\Bigg(\raisebox{0.25cm}{
{\ytableausetup{mathmode, boxsize=1em}
\begin{ytableau}
1&3&4&5&6\\
2&4\\
2
\end{ytableau}}}
\;
,
\;
\raisebox{0.25cm}{
{\ytableausetup{mathmode, boxsize=1em}
\begin{ytableau}
1&2&5&6&7\\
3&8\\
4
\end{ytableau}}
}\Bigg),
\]
\[
\Bigg(\raisebox{0.25cm}{
{\ytableausetup{mathmode, boxsize=1em}
\begin{ytableau}
1&3&4&5&6\\
2&4&5\\
2
\end{ytableau}}}
\;
,
\;
\raisebox{0.25cm}{
{\ytableausetup{mathmode, boxsize=1em}
\begin{ytableau}
1&2&5&6&7\\
3&8&9\\
4
\end{ytableau}}
}\Bigg),
\;
\Bigg(\raisebox{0.25cm}{
{\ytableausetup{mathmode, boxsize=1em}
\begin{ytableau}
1&3&4&5&6\\
2&4&5&6\\
2
\end{ytableau}}}
\;
,
\;
\raisebox{0.25cm}{
{\ytableausetup{mathmode, boxsize=1em}
\begin{ytableau}
1&2&5&6&7\\
3&8&9&10\\
4
\end{ytableau}}
}\Bigg),
\]
\[
\Bigg(\raisebox{0.25cm}{
{\ytableausetup{mathmode, boxsize=1em}
\begin{ytableau}
1&3&4&4&6\\
2&4&5&6\\
2&5
\end{ytableau}}}
\;
,
\;
\raisebox{0.25cm}{
{\ytableausetup{mathmode, boxsize=1em}
\begin{ytableau}
1&2&5&6&7\\
3&8&9&10\\
4&11
\end{ytableau}}
}\Bigg),
\;
\Bigg(\raisebox{0.3cm}{
{\ytableausetup{mathmode, boxsize=1em}
\begin{ytableau}
1&3&3&4&6\\
2&4&4&6\\
2&5\\
5
\end{ytableau}}}
\;
,
\;
\raisebox{0.3cm}{
{\ytableausetup{mathmode, boxsize=1em}
\begin{ytableau}
1&2&5&6&7\\
3&8&9&10\\
4&11\\
12
\end{ytableau}}
}\Bigg),
\]
\[
\Bigg(\raisebox{0.3cm}{
{\ytableausetup{mathmode, boxsize=1em}
\begin{ytableau}
1&1&3&4&6\\
2&3&4&6\\
2&4\\
5\\
5
\end{ytableau}}}
\;
,
\;
\raisebox{0.3cm}{
{\ytableausetup{mathmode, boxsize=1em}
\begin{ytableau}
1&2&5&6&7\\
3&8&9&10\\
4&11\\
12\\
13
\end{ytableau}}
}\Bigg),
\;
\Bigg(\raisebox{0.5cm}{
{\ytableausetup{mathmode, boxsize=1em}
\begin{ytableau}
1&1&3&4&5\\
2&3&4&6\\
2&4&6\\
5\\
5
\end{ytableau}}}
\;
,
\;
\raisebox{0.5cm}{
{\ytableausetup{mathmode, boxsize=1em}
\begin{ytableau}
1&2&5&6&7\\
3&8&9&10\\
4&11&14\\
12\\
13
\end{ytableau}}
}\Bigg),
\]
\[
\Bigg(\raisebox{0.5cm}{
{\ytableausetup{mathmode, boxsize=1em}
\begin{ytableau}
1&1&3&4&4\\
2&3&4&5\\
2&4&6\\
5&6\\
5
\end{ytableau}}}
\;
,
\;
\raisebox{0.5cm}{
{\ytableausetup{mathmode, boxsize=1em}
\begin{ytableau}
1&2&5&6&7\\
3&8&9&10\\
4&11&14\\
12&15\\
13
\end{ytableau}}
}\Bigg),
\;
\Bigg(\raisebox{0.5cm}{
\raisebox{-0.5cm}{$\T(w)\; =\; $}
{\ytableausetup{mathmode, boxsize=1em}
\begin{ytableau}
1&1&1&4&4\\
2&3&3&5\\
2&4&4\\
5&6\\
5&6
\end{ytableau}}}
\;
,
\;
\raisebox{0.5cm}{
\raisebox{-0.5cm}{$\Q(w)\; =\; $}
{\ytableausetup{mathmode, boxsize=1em}
\begin{ytableau}
1&2&5&6&7\\
3&8&9&10\\
4&11&14\\
12&15\\
13&16
\end{ytableau}}
}\Bigg).
\]
\end{example}

\begin{Algorithm}[\cite{LaScalaNardozzaSenato06}]
\label{A:ReverseRightInsertion}
 Let~$t$ be in~$\YoungT(\Si)$ and $x$ in~$\Si$. Starting with the super tableau~$t\insr{} x$, together with the outer corner that has been
added to the frame of~$t$ after insertion, we can recover the original tableau~$t$ and the element~$x$ as follows:
~\par\nobreak
\emph{Input:} A super tableau $t$ and an outer corner of~$t$.

\emph{Output:} A super tableau $t'$ and a letter~$x\in \Si$.

\emph{Method:}  Suppose that $y \in \Si_0$ (resp.~$y \in \Si_1$) is the entry in the outer corner of~$t$. Find in the row above this outer corner the entry farthest to the right which is strictly smaller  (resp. smaller) than~$y$. Then this entry is replaced by~$y$ and it is bumped up to the next row where the process is repeated until an entry is bumped out of
the top row. Output the resulting super tableau for~$t'$ and the last element which is bumped out for~$x$.
\end{Algorithm}

\begin{Algorithm}
\label{A:ReverseRSCorrespodence}
Let~$w$ be in~$\Si$. Starting from the pair of super tableaux~$(\T(w),\Q(w))$
obtained by Algorithm~\ref{A:RSalgo} we can recover the initial word~$w$ as follows:
~\par\nobreak
\emph{Input:} A pair~$(T,Q)\in \YoungT(\Si)\times \YoungST([n])$ of same-shape super tableaux containing $k$ boxes.

\emph{Output:} A word $x_1\ldots x_k$ over $\Si$.

\emph{Method:}
Start with~$T_k= T$ and~$Q_k= Q$. For each~$i=k-1,\ldots, 1$, take the box filled with~$i+1$ in~$Q_{i+1}$, and apply Algorithm~\ref{A:ReverseRightInsertion} to~$T_{i+1}$ with the outer corner corresponding to that box. The resulting tableau is~$T_i$, and the element that is bumped out of the top row of~$T_{i+1}$ is denoted by~$x_{i+1}$. Remove the box containing~$i+1$ from~$Q_{i+1}$ and denote the resulting standard tableau by~$Q_{i}$. Output the word~$x_1 \ldots x_k$.
\end{Algorithm}

This sets up a one-to-one correspondence between words over~$\Si$ and pairs of same-shape tableaux~$(T, Q)$ in~$\YoungT(\Si)\times \YoungST([n])$:

\begin{proposition}
\label{P:RSCorrespondence}
The map~$w \mapsto (\T(w),\Q(w))$ defines a bijection between words over~$\Si$ and pairs consisting of a super tableau over~$\Si$ and a standard tableau over~$[n]$ of the same shape. 
\end{proposition}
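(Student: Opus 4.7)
The plan is to show that Algorithms~\ref{A:RSalgo} and~\ref{A:ReverseRSCorrespodence} are mutually inverse. First I would verify that the forward map~$w\mapsto(\T(w),\Q(w))$ is well-defined with values in same-shape pairs in~$\YoungT(\Si)\times\YoungST([n])$, where~$n=|w|$. By induction on~$|w|$, each step of Algorithm~\ref{A:RightInsertion} turns a super tableau into a super tableau, as established in~\cite{LaScalaNardozzaSenato06}, so~$\T(w)\in\YoungT(\Si)$. Since Algorithm~\ref{A:RSalgo} labels each new box by a distinct, strictly increasing integer~$1,\ldots,n$ at an outer corner of the current frame, the tableaux~$T_i$ and~$Q_i$ share the same shape at every step, the entries of~$\Q(w)$ are pairwise distinct, and the labels strictly increase along rows and along columns: if positions~$(r,c)$ and~$(r,c+1)$ (resp.~$(r,c)$ and~$(r+1,c)$) both belong to the final frame, the former must have been filled before the latter, since boxes are only ever added at outer corners of the current shape.

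The heart of the proof is to check that Algorithm~\ref{A:ReverseRightInsertion} inverts Algorithm~\ref{A:RightInsertion}. In the forward direction, an insertion~$t\insr{}x$ produces a bumping path descending row by row: whenever~$x$ bumps~$y$ out of some row, one has either~$x<y$, or~$x=y$ with~$||x||=1$, and Lemma~\ref{L:RowBumpingLemma} controls how the position of the bumped box evolves. The reverse procedure starts from the newly added outer corner and, in each row above, selects the rightmost entry strictly smaller (resp. smaller) than the current element of~$\Si_0$ (resp.~$\Si_1$). I expect the main obstacle to be showing that this selection coincides, level by level, with the entry that was bumped in the forward pass; this requires a careful induction up the bumping path using both the weakly increasing row condition on super tableaux (where repetitions are allowed only in~$\Si_0$) and the signed comparison rule defining Algorithm~\ref{A:RightInsertion}, and it is exactly where the super distinction between~$\Si_0$ and~$\Si_1$ must be tracked.

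Once the inversion of the atomic insertion step is established, the bijection follows formally. Given any same-shape pair~$(T,Q)\in\YoungT(\Si)\times\YoungST([n])$, the largest label of~$Q$ marks an outer corner of~$T$, so Algorithm~\ref{A:ReverseRightInsertion} applies there and returns a pair~$(T_{n-1},x_n)$ with~$T_{n-1}\insr{}x_n=T$ and the same corner as added box. Iterating down to~$i=0$ produces a word~$w=x_1\ldots x_n$, and a straightforward induction on~$n$ combined with the inversion property shows both that~$(\T(w),\Q(w))=(T,Q)$, and that starting from a word~$w$ and composing the two algorithms recovers~$w$. This yields the desired bijection.
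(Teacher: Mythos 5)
Your proposal is correct and follows essentially the same route as the paper, which states the proposition as a direct consequence of Algorithms~\ref{A:RSalgo} and~\ref{A:ReverseRSCorrespodence} being mutually inverse, with the inversion of the atomic insertion step (your ``main obstacle'') taken from~\cite{LaScalaNardozzaSenato06} via Algorithm~\ref{A:ReverseRightInsertion} and Lemma~\ref{L:RowBumpingLemma}. Your added verifications --- that~$\Q(w)$ is standard because boxes are only ever created at outer corners of the current frame, and that the largest label of~$Q$ sits at an outer corner so the reverse procedure can be iterated --- are exactly the details the paper leaves implicit.
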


This is the super version of the \emph{Robinson--Schensted correspondence} between words over~$[n]$ and pairs consisting of a  semistandard tableau  and a standard tableau over~$[n]$ of the same shape,~\cite{Fulton97}.

\subsection{The super plactic monoid of type A}
\label{SS:SuperplacticMonoid}

The \emph{super plactic monoid} over~$\Si$ is the monoid, denoted by~$\P(\Si)$,  generated by the set~$\Si$ and subject to the following family of \emph{super Knuth relations},~\cite{LaScalaNardozzaSenato06}:
\begin{equation}
\label{Eq:SuperKnuthRelations}
\begin{array}{rl}
xzy=zxy,\;\text{ with }\; x=y \;\text{ only if }\; ||y|| = 0 \;\text{ and } \; y=z\; \text{ only if }\; ||y|| = 1,\\
yxz=yzx,\;\text{ with }\; x=y \;\text{ only if }\;  ||y|| = 1\;\text{ and } \; y=z\; \text{ only if }\; ||y|| =0,
\end{array}
\end{equation}
for any~$x\leq y\leq z$ of elements of~$\Si$. 
The congruence generated by the relations~(\ref{Eq:SuperKnuthRelations}), is denoted by~$\sim_{\P(\Si)}$, and called the \emph{super plactic congruence}.
Two words over~$\Si$ are \emph{super plactic equivalent} if one can be transformed into the other with respect~$\sim_{\P(\Si)}$.
For instance, consider the readings of the super tableau~$t$ presented in~\eqref{Ex:tableauExample} and the super skew tableau~$S$ presented in~\eqref{Ex:SkewtableauExample}, we have
\begin{equation}
\label{Eq:SuperplacticEquivalent}
\R(t) \;=\; 42{\bf 114} 323 \;\sim_{\P(\Si)}\; 4{\bf 214}1323\;\sim_{\P(\Si)}\; 4241{\bf 132}3 \;\sim_{\P(\Si)}\; 42413123 \;=\; \R(S).
\end{equation}
Since the relations~(\ref{Eq:SuperKnuthRelations}) are $\Zb_2$-homogeneous, the monoid~$\P(\Si)$ is a supermonoid. For any~$w$ in~$\Si^\ast$,  we have~$w\sim_{\P(\Si)} \R(\T(w))$,~\cite{ Hage2021Super}. Moreover,  the set~$\YoungT(\Si)$ satisfies the \emph{cross-section property} for~$\sim_{\P(\Si)}$, that is:

\begin{property}[\cite{LaScalaNardozzaSenato06}]
\label{P:CrossSectionProperty}
For all~$w$ and~$w'$ in~$\Si^\ast$, we have~$w \sim_{\P(\Si)} w'$  if and only if~$\T(w) = \T(w')$.
\end{property}

We define an internal product~$\star_{\YoungT(\Si)}$ on $\YoungT(\Si)$ by setting~$t \star_{\YoungT(\Si)} t' :=  (t\insr{} \R(t'))$, for all~$t,t'$ in $\YoungT(\Si)$.
By definition the relations $t\star_{\YoungT(\Si)} \emptyset = t$ and $\emptyset \star_{\YoungT(\Si)} t = t$ hold, showing that the product~$\star_{\YoungT(\Si)}$ is unitary with respect to~$\emptyset$. Following Property~\ref{P:CrossSectionProperty}, we deduce that the product~$\star_{\YoungT(\Si)}$  is associative and  the monoids~$(\YoungT(\Si), \star_{\YoungT(\Si)})$ and~$\P(\Si)$ are isomorphic.

\begin{lemma}
\label{L:SuperplacticLargestSmallest}
Let~$w$ and~$v$ be two words over~$\Si$ and let~$w'$ and~$v'$ be the words obtained respectively from~$w$ and~$v$  after removing the $i$ largest and the $j$ smallest letters for any~$i$ and~$j$. If~$w$ and~$v$ are super plactic equivalent, then~$w'$ and~$v'$ are so.
\end{lemma}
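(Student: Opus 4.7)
\emph{Proof plan.}
The strategy is to reduce to a single application of a super Knuth relation from~\eqref{Eq:SuperKnuthRelations} and verify the reduction by a short case analysis. Since $\sim_{\P(\Si)}$ is the transitive closure of single Knuth moves, it suffices to prove the claim when $w = u_1 \alpha u_2$ and $v = u_1 \beta u_2$ with $(\alpha, \beta)$ either $(xzy, zxy)$ or $(yxz, yzx)$, $x \leq y \leq z$, satisfying the signature conditions in~\eqref{Eq:SuperKnuthRelations}. The super Knuth relations preserve the multiset of letters, so $w$ and~$v$ share the same letter multiset, and hence the prescription ``remove the $i$ largest and the $j$ smallest letters'' selects the same values in both. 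I interpret this removal as erasing every occurrence of a letter whose value lies in $V = U \cup D$, where $U$ is an up-set of $\Si$ (the largest values) and $D$ is a down-set (the smallest values); the operation is position-local and acts identically on the common factors $u_1$ and $u_2$, so the problem reduces to comparing the residuals of the two triples $\alpha$ and $\beta$.

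Because $x \leq y \leq z$, the intersection $\{x, y, z\} \cap V$ is an initial and/or a final segment of that chain, leaving only a handful of configurations to inspect. For the relation $xzy \leftrightarrow zxy$: removing $\{z\}$ yields $xy$ on both sides, removing $\{y,z\}$ yields $x$, removing $\{x\}$ yields $zy$, removing $\{x,y\}$ yields $z$, removing $\{x,y,z\}$ yields the empty word, and removing $\emptyset$ is trivial; in every case the two residual triples coincide letter for letter. The same enumeration handles $yxz \leftrightarrow yzx$. Consequently $w' = v'$ after a single Knuth move, and iterating along any chain of moves witnessing $w \sim_{\P(\Si)} v$ yields $w' \sim_{\P(\Si)} v'$.

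The only delicate point, which requires merely careful bookkeeping, is the tie configurations $x = y$ or $y = z$: here the signature hypotheses in~\eqref{Eq:SuperKnuthRelations} must be invoked to certify that the original Knuth move was legitimately applicable, and one checks that an up-set or down-set removal either deletes an equal pair of letters together or leaves it intact, so no forbidden signature pattern is created in the residuals. Once this is in hand, no further argument beyond the case enumeration above is needed.
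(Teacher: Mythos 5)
Your proof is correct and follows essentially the same route as the paper's: reduce to a single super Knuth move, observe that removals landing in the common factors are harmless, and check by cases that the residual of the defining triple $xzy/zxy$ (resp.\ $yxz/yzx$) is literally the same word on both sides, the only removable letter inside the triple being the extremal one $z$ (resp.\ $x$). The only cosmetic difference is that the paper peels off one extremal letter at a time and concludes by induction, whereas you delete the whole up-set/down-set in a single pass; both arguments treat the tie configurations $x=y$ and $y=z$ in the same way.
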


\begin{proof}
It is sufficient by induction to prove that the words obtained by removing the largest
or the smallest letters from~$w$ and $v$ are super plactic equivalent. 
We  prove the case of the largest, the other being similar. 
Suppose that~$w=uxzyu'$ and~$v=uzxyu'$ (resp.~$w=uyxzu'$ and~$v=uyzxu'$) for all~$u, u'$ in~$\Si^\ast$ such that~$x\leq y\leq z$ in~$\Si$ with~$x=y$  only if  $||y|| = 0$ (resp.~$||y|| = 1$)  and~$y=z$ only if~$||y|| = 1$ (resp.~$||y|| = 0$). We  consider two cases depending on whether or not the element that is removed from~$w$ and~$v$ is one of the letters~$x$, $y$, or $z$. If this element is not one
of these letters, then the resulting words are obviously super plactic equivalent. Otherwise, the removed letter must be the
letter $z$ and then the resulting words are equal, showing the claim.
\end{proof}

\subsection{The super jeu de taquin}
\label{SS:SuperJeuDeTaquin}
We recall from~\cite{Hage2021Super} the super jeu de taquin procedure which transforms super skew tableaux into super tableaux over~$\Si$ as follows. 
A \emph{forward sliding} is a sequence of  the following \emph{sliding operations}:
\[
\begin{array}{rl}
\ytableausetup{mathmode, boxsize=1em}
{\begin{ytableau}
*(BlueD)&y\\
   x
\end{ytableau}}
 \;\rightarrow\;
{ \begin{ytableau}
x& y\\
 *(BlueD)&\none
\end{ytableau}}, &\;
\text{ for any } x\leq y \text{ with }  x=y \text{ only if } ||x||=0,
\\
\\
{\begin{ytableau}
 *(BlueD)&x\\
y
\end{ytableau}}
\;\rightarrow\;
{\begin{ytableau}
x& *(BlueD)\\
y&\none 
\end{ytableau}},&\; \text{ for any }~x\leq y \text{ with } x=y \text{ only if } ||x||=1,
\\
\\
{\begin{ytableau}
 *(BlueD)&\empty\\
x
\end{ytableau}}
\;\rightarrow\;
{\begin{ytableau}
x&\empty\\
*(BlueD)&\none 
\end{ytableau}},
&
\quad 
{\begin{ytableau}
*(BlueD)&x\\
\empty&\none
\end{ytableau}}
\;\rightarrow\;
{\begin{ytableau}
x& *(BlueD)\\
\empty&\none 
\end{ytableau}},
\quad
{\begin{ytableau}
*(BlueD) &\empty\\
\empty&\none
\end{ytableau}}
\;\rightarrow\;
{\begin{ytableau}
\empty& *(BlueD)\\
\empty&\none 
\end{ytableau}},\; \text{ for any } x,
\end{array}
\]
 starting from a super skew tableau and one of its inner corners, and moving the empty box until it becomes an outer corner.
The \emph{super jeu de taquin} on a super skew tableau~$S$ consists in applying successively the forward sliding algorithm starting from~$S$ until we obtain a super tableau,  denoted by~$\Rec(S)$, and called the \emph{rectification} of~$S$,~\cite{Hage2021Super}.
Two super  skew  tableaux are \emph{super jeu de taquin equivalent} if one can be obtained from the other by a sequence of sliding operations. 
We show that if two super skew tableaux are super jeu de taquin equivalent, then their reading words are super plactic equivalent,~\cite{Hage2021Super}. Moreover, we show the following property:

\begin{property}[\cite{Hage2021Super}]
\label{P:SuperJeuDeTaquinPlactic}
For all~$S, S'\in \SkewT(\Si)$,~$\Rec(S) = \Rec(S')$ if and only if~$\R(S) \sim_{\P(\Si)} \R(S')$.
\end{property}

We deduce that starting with a given super skew tableau, all choices of inner corners lead to the same rectified super tableau,~\cite{Hage2021Super}. This is the \emph{confluence property} of the super jeu de taquin.

\begin{example}
\label{Ex:SuperJeuDeTaquin}
For instance, the super jeu de taquin on the super skew tableau~$S$ in~(\ref{Ex:SkewtableauExample}) applies eight occurrences of forward sliding:
\[
\raisebox{-0.5cm}{$S\; =\;$}
{\ytableausetup{mathmode, boxsize=1em}
\begin{ytableau}
\empty&\empty&\empty&1&2&3\\
\empty&\empty&1&3\\
\empty&\empty*(RedD)\\
\empty&4\\
2\\
4
\end{ytableau}
\;
\raisebox{-0.5cm}{$\rightarrow$}}
\;
{\ytableausetup{mathmode, boxsize=1em}
\begin{ytableau}
\empty&\empty&\empty&1&2&3\\
\empty&\empty&1&3\\
\empty&4\\
\empty&\empty*(RedD)\\
2\\
4
\end{ytableau}}
\raisebox{-0.5cm}{$;$}
\;
{\ytableausetup{mathmode, boxsize=1em}
\begin{ytableau}
\empty&\empty&\empty&1&2&3\\
\empty&\empty&1&3\\
\empty&4\\
\empty*(BlueD)\\
2\\
4
\end{ytableau}}
\;
\raisebox{-0.5cm}{$\rightarrow$}
\;
{\ytableausetup{mathmode, boxsize=1em}
\begin{ytableau}
\empty&\empty&\empty&1&2&3\\
\empty&\empty&1&3\\
\empty&4\\
2\\
\empty*(BlueD)\\
4
\end{ytableau}}
\;
\raisebox{-0.5cm}{$\rightarrow$}
\;
{\ytableausetup{mathmode, boxsize=1em}
\begin{ytableau}
\empty&\empty&\empty&1&2&3\\
\empty&\empty&1&3\\
\empty&4\\
2\\
4\\
\empty*(BlueD)
\end{ytableau}}
\;
\raisebox{-0.5cm}{$;$}
\]
\[
{\ytableausetup{mathmode, boxsize=1em}
\begin{ytableau}
\empty&\empty&\empty&1&2&3\\
\empty&\empty&1&3\\
\empty*(GreenD)&4\\
2\\
4
\end{ytableau}}
\;
\raisebox{-0.5cm}{$\rightarrow$}
\;
{\ytableausetup{mathmode, boxsize=1em}
\begin{ytableau}
\empty&\empty&\empty&1&2&3\\
\empty&\empty&1&3\\
2&4\\
\empty*(GreenD)\\
4
\end{ytableau}}
\;
\raisebox{-0.5cm}{$\rightarrow$}
\;
{\ytableausetup{mathmode, boxsize=1em}
\begin{ytableau}
\empty&\empty&\empty&1&2&3\\
\empty&\empty&1&3\\
2&4\\
4\\
\empty*(GreenD)
\end{ytableau}}
\raisebox{-0.5cm}{$;$}
\]
\[
{\ytableausetup{mathmode, boxsize=1em}
\begin{ytableau}
\empty&\empty&\empty&1&2&3\\
\empty&\empty*(Blue)&1&3\\
2&4\\
4
\end{ytableau}}
\;
\raisebox{-0.5cm}{$\rightarrow$}
\;
{\ytableausetup{mathmode, boxsize=1em}
\begin{ytableau}
\empty&\empty&\empty&1&2&3\\
\empty&1&\empty*(Blue)&3\\
2&4\\
4
\end{ytableau}}
\;
\raisebox{-0.5cm}{$\rightarrow$}
\;
{\ytableausetup{mathmode, boxsize=1em}
\begin{ytableau}
\empty&\empty&\empty&1&2&3\\
\empty&1&3&\empty*(Blue)\\
2&4\\
4
\end{ytableau}}
\raisebox{-0.5cm}{$;$}
\]
\[
{\ytableausetup{mathmode, boxsize=1em}
\begin{ytableau}
\empty&\empty&\empty*(GreenL)&1&2&3\\
\empty&1&3\\
2&4\\
4
\end{ytableau}}
\;
\raisebox{-0.5cm}{$\rightarrow$}
\;
{\ytableausetup{mathmode, boxsize=1em}
\begin{ytableau}
\empty&\empty&1&\empty*(GreenL)&2&3\\
\empty&1&3\\
2&4\\
4
\end{ytableau}}
\;
\raisebox{-0.5cm}{$\rightarrow$}
\;
{\ytableausetup{mathmode, boxsize=1em}
\begin{ytableau}
\empty&\empty&1&2&\empty*(GreenL)&3\\
\empty&1&3\\
2&4\\
4
\end{ytableau}}
\;
\raisebox{-0.5cm}{$\rightarrow$}
\;
{\ytableausetup{mathmode, boxsize=1em}
\begin{ytableau}
\empty&\empty&1&2&3&\empty*(GreenL)\\
\empty&1&3\\
2&4\\
4
\end{ytableau}}
\raisebox{-0.5cm}{$;$}
\]
\[
{\ytableausetup{mathmode, boxsize=1em}
\begin{ytableau}
\empty&\empty*(YellowD)&1&2&3\\
\empty&1&3\\
2&4\\
4
\end{ytableau}}
\;
\raisebox{-0.5cm}{$\rightarrow$}
\;
{\ytableausetup{mathmode, boxsize=1em}
\begin{ytableau}
\empty&1&\empty*(YellowD)&2&3\\
\empty&1&3\\
2&4\\
4
\end{ytableau}}
\;
\raisebox{-0.5cm}{$\rightarrow$}
\;
{\ytableausetup{mathmode, boxsize=1em}
\begin{ytableau}
\empty&1&2&\empty*(YellowD)&3\\
\empty&1&3\\
2&4\\
4
\end{ytableau}}
\;
\raisebox{-0.5cm}{$\rightarrow$}
\;
{\ytableausetup{mathmode, boxsize=1em}
\begin{ytableau}
\empty&1&2&3&\empty*(YellowD)\\
\empty&1&3\\
2&4\\
4
\end{ytableau}}
\raisebox{-0.5cm}{$;$}
\]
\[
{\ytableausetup{mathmode, boxsize=1em}
\begin{ytableau}
\empty&1&2&3\\
\empty*(red)&1&3\\
2&4\\
4
\end{ytableau}}
\;
\raisebox{-0.5cm}{$\rightarrow$}
\;
{\ytableausetup{mathmode, boxsize=1em}
\begin{ytableau}
\empty&1&2&3\\
1&\empty*(red)&3\\
2&4\\
4
\end{ytableau}}
\;
\raisebox{-0.5cm}{$\rightarrow$}
\;
{\ytableausetup{mathmode, boxsize=1em}
\begin{ytableau}
\empty&1&2&3\\
1&3&\empty*(red)\\
2&4\\
4
\end{ytableau}}
\;
\raisebox{-0.5cm}{$;$}
\]
\[
{\ytableausetup{mathmode, boxsize=1em}
\begin{ytableau}
\empty *(cyan)&1&2&3\\
1&3\\
2&4\\
4
\end{ytableau}}
\;
\raisebox{-0.5cm}{$\rightarrow$}
\;
{\ytableausetup{mathmode, boxsize=1em}
\begin{ytableau}
1&\empty *(cyan)&2&3\\
1&3\\
2&4\\
4
\end{ytableau}}
\;
\raisebox{-0.5cm}{$\rightarrow$}
\;
{\ytableausetup{mathmode, boxsize=1em}
\begin{ytableau}
1&2&\empty *(cyan)&3\\
1&3\\
2&4\\
4
\end{ytableau}}
\;
\raisebox{-0.5cm}{$\rightarrow$}
\;
{\ytableausetup{mathmode, boxsize=1em}
\begin{ytableau}
1&2&3&\empty *(cyan)\\
1&3\\
2&4\\
4
\end{ytableau}}
\raisebox{-0.5cm}{$\;=\Rec(S)$}
\]
with $\R(S)=42413123  \sim_{\P(\Si)} 42114323=  \R(\Rec(S))$, as shown in~\eqref{Eq:SuperplacticEquivalent}.
\end{example}

\subsubsection{The reverse sliding}
\label{SSS:ReverseSlidings}
A \emph{reverse sliding} is a sequence of the following \emph{reverse sliding operations},~\cite{Hage2021Super}:
\[
\begin{array}{rl}
\ytableausetup{mathmode, boxsize=1em}
\begin{ytableau}
 \none &y\\
   x&*(BlueD)
\end{ytableau}
 \;\rightarrow\;
 \begin{ytableau}
\none &*(BlueD)\\
   x&y
\end{ytableau},
&\;
\text{ for any }  x\leq y \text{ with } x=y \text{ only if } ||x||=0,
\\
\\
\begin{ytableau}
\none &x\\
y&*(BlueD)
\end{ytableau}
\;\rightarrow\;
{\begin{ytableau}
\none &x\\
*(BlueD)&y
\end{ytableau}},
&\; \text{ for any }~x\leq y \text{ with }  x=y \text{ only if } ||x||=1,
\\
\\
\begin{ytableau}
 \none &x\\
   \empty&*(BlueD)
\end{ytableau}
 \;\rightarrow\;
 \begin{ytableau}
\none &*(BlueD)\\
 \empty&x
\end{ytableau},
&
\quad
\begin{ytableau}
 \none &\empty\\
   x&*(BlueD)
\end{ytableau}
 \;\rightarrow\;
 \begin{ytableau}
 \none &\empty\\
 *(BlueD) & x
\end{ytableau},
\quad
\begin{ytableau}
 \none &\empty\\
  \empty&*(BlueD)
\end{ytableau}
 \;\rightarrow\;
 \begin{ytableau}
 \none &\empty\\
 *(BlueD)& \empty
\end{ytableau}, \; \text{ for any } x,
\end{array}
\] 
starting from a super skew tableau and one of its empty outer corners, and moving this box until it becomes an inner corner.
Starting from the resulting super skew tableau of a forward sliding, together with
the outer corner that was removed, and by applying the reverse sliding, we recover
the initial super skew tableau with the chosen inner corner.

\subsubsection{Super jeu de taquin and insertion}
\label{SSS:InsertionsSuperJeuDeTaquin}
Let $S$ and~$S'$ be super skew tableaux in~$\SkewT(\Si)$ of shape $(\lambda_1,\ldots,\lambda_k)/(\lambda'_1,\ldots,\lambda'_{k'})$ and~$(\mu_1,\ldots,\mu_l)/(\mu'_1,\ldots,\mu'_{l'})$, respectively. We denote by~$[S,S']$ the super skew tableau 
of shape
\[
(\mu_1+\lambda_1,\ldots,\mu_l+\lambda_1, \lambda_1,\ldots,\lambda_{k}) /(\mu'_1+\lambda_1,\ldots, \mu'_{l'}+\lambda_1, \lambda_1,\ldots, \lambda_1, \lambda'_1,\ldots, \lambda'_{k'}),
\]
obtained by concatenating $S'$ over~$S$, as illustrated in the following diagram:
\[
\raisebox{1.45cm}{$[S,S']\;=\;$}
\scalebox{0.6}{
\begin{tikzpicture}
\draw  (0,0) --++(0,1)  --++(0.5,0)-- ++(0,1)-- ++(2,0)-- ++(0,-0.75)-- ++(-1,0)-- ++(0,-1.25)-- ++(-1.5,0)-- ++(-2,0)-- ++(0,-1)-- ++(-0.5,0)-- ++(0,-1)-- ++(1.5,0)-- ++(0,1.25)-- ++(1,0)-- ++(0,0.75);
\draw (0.7,1) node[right]{\scalebox{1.5} {$S^\prime$}} ;
\draw (-1.75,-1) node[right]{\scalebox{1.5} {$S$}};
\end{tikzpicture}}
\raisebox{1.5cm}{$.$}
\]
We define the \emph{insertion product}~$\star_{\SkewT(\Si)}: \SkewT(\Si)\times \SkewT(\Si)\to \YoungT(\Si)$  by setting $S \star_{\SkewT(\Si)} S' :=  (\emptyset\insr{} \R(S) \R(S'))$, for all $S,S'$ in $\SkewT(\Si)$.
This insertion product satisfies the following equality~$S\star_{\SkewT(\Si)} S' = \Rec([S,S'])$
in~$\YoungT(\Si)$, for all~$S, S'$ in~$\SkewT(\Si)$,~\cite{Hage2021Super}.

\section{A super-RSK correspondence with symmetry}
\label{S:SuperRSKCorrespondence}
 In this section, we introduce a super version of the RSK correspondence over a signed version. Using a matrix-ball interpretation, we show that  this correspondence satisfies the symmetry property in complete generality. 

\subsection{A super-RSK correspondence}
\label{SS:SuperRSKCorrespondence}

Let~$(\Si,||.||_1)$ and~$(\Si', ||.||_2)$ be signed alphabets. We define an alphabet structure on the product set~$\Si\times \Si'$ by considering the following  order~$<$ defined by
\begin{equation}
\label{E:Order1}
(x_1,y_1)< (x_2,y_2)\; \text{ if } \; x_1<x_2 \; \text{ or } \; \left\{
    \begin{array}{ll}
        x_1=x_2 \in \Si_0& \mbox{\; and\; } y_1<y_2 \\
        x_1=x_2\in \Si_1& \mbox{\; and\; } y_2<y_1.
    \end{array}
\right.
\end{equation}
Moreover, the signature map~$||.||: \Si\times \Si' \to \Zb_2$ is defined by~$||(x,y)||= ||x||_1+ ||y||_2$. 
From now on, and if there is no possible confusion, we will denote by~$||.||$  the signature maps of~$\Si$,~$\Si'$ and~$\Si\times \Si'$.
A \emph{signed two-rowed array on the alphabets}~$\Si$ and~$\Si'$ is a $2\times k$ matrix 
\[
\begin{pmatrix}
 x_1&\ldots &x_k\\
 y_1&\ldots &y_k
\end{pmatrix}
\]
with~$x_i\in \Si$ and~$y_i\in\Si'$, for all~$i=1,\ldots, k$, such that the following condition holds:
\begin{equation}
\label{E:Order2}
(x_i, y_i) \leq (x_{i+1}, y_{i+1}),  \; \text{ and } \; (x_i, y_i)= (x_{i+1}, y_{i+1}) \; \text{ only if } \; ||(x_i, y_i)|| = 0.
\end{equation}

\begin{Algorithm}
\label{A:RSkalgo}
Let~$\Si$ and~$\Si'$ be  signed alphabets. Starting from a signed two-rowed array~$w$ on~$\Si$ and~$\Si'$, we compute a pair of same-shape super tableaux~$(\T(w), \Q(w))$  whose entries are the
ones of the second and the first row of~$w$, respectively, as follows:
~\par\nobreak
\emph{Input:} A signed two-rowed array~$w= \begin{pmatrix}
 x_1&\ldots &x_k\\
 y_1&\ldots &y_k
\end{pmatrix}$
on $\Si$ and~$\Si'$.

\emph{Output:} A pair~$(\T(w), \Q(w))\in \YoungT(\Si')\times \YoungT(\Si)$ of same-shape super tableaux with~$k$ boxes.

\emph{Method:} Start with an empty super tableau~$T_0$ and an empty  super tableau~$Q_0$. For each $i = 1,\ldots, k$, compute~$T_{i -1}\insr{} y_i$ as per Algorithm~\ref{A:RightInsertion} and let $T_i$ be the resulting super tableau.
Add a box filled with~$x_i$ to the super  tableau~$Q_{i -1}$ in the same place
as the box that belongs to $T_i$ but not to~$T_{i -1}$; let $Q_i$ be the resulting
super  tableau.
Output $T_k$ for $\T(w)$ and $Q_k$ as $\Q(w)$.
\end{Algorithm}

\begin{proposition}
\label{P:RSKalgoCorrectness}
Algorithm~\ref{A:RSkalgo} always halts with the correct output.
\end{proposition}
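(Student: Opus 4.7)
The plan is to treat the algorithm's termination, the shape/content assertions, and the super tableau conditions on~$Q_k$ separately. Termination is immediate since the algorithm performs exactly $k$ iterations, each consisting of a right insertion followed by the addition of a single labelled box. By induction on $i$ using Algorithm~\ref{A:RightInsertion}, the final tableau $T_k=\T(w)$ is a super tableau over~$\Si'$ whose entries are $y_1,\ldots,y_k$, and since at step~$i$ the shapes of~$T_{i-1}$ and~$Q_{i-1}$ grow by the same box, $T_k$ and $Q_k$ share a common shape with exactly $k$ boxes and $Q_k$ is filled with $x_1,\ldots,x_k$. The only nontrivial point is to verify that the filling of $Q_k$ satisfies the super tableau axioms~\eqref{Eq:SuperTableaux}.

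First I would record two monotonicity facts. Condition~\eqref{E:Order2} on~$w$ forces $x_1\leq\ldots\leq x_k$ in~$\Si$, so any two entries of~$Q_k$ added at steps $s<s'$ automatically satisfy $x_s\leq x_{s'}$. Moreover, if two edge-adjacent boxes~$p$ and~$p'$ of~$Q_k$ are such that $p$ lies directly above or directly to the left of $p'$, then $p$ must already belong to the shape at the time $p'$ is introduced (otherwise $\lambda_{s'}$ would not be a Young diagram), so $p$ was added at a strictly earlier step than $p'$. These two observations together take care of the weak inequality part of~\eqref{Eq:SuperTableaux}; it remains only to rule out the forbidden configurations with equal entries: two horizontally adjacent entries equal in~$\Si_1$, or two vertically adjacent entries equal in~$\Si_0$.

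To this end I would perform a two-step analysis combining Lemma~\ref{L:RowBumpingLemma} with~\eqref{E:Order1}--\eqref{E:Order2}. Assume $x_i=x_{i+1}$ and write $p_i=(a_i,b_i)$. If $x_i\in\Si_0$, then~\eqref{E:Order2} imposes $y_i\leq y_{i+1}$ with equality only if $||y_i||=0$, so Lemma~\ref{L:RowBumpingLemma} gives $b_i<b_{i+1}$ and $a_i\geq a_{i+1}$: the new box $p_{i+1}$ lies strictly to the right and weakly above $p_i$. If $x_i\in\Si_1$, then~\eqref{E:Order2} imposes $y_{i+1}\leq y_i$ with equality only if $||y_i||=1$, i.e.\ the hypothesis of Lemma~\ref{L:RowBumpingLemma} fails, and combining its contrapositive with the monotone growth of Young diagrams yields $b_i\geq b_{i+1}$ and $a_i<a_{i+1}$: the new box $p_{i+1}$ lies strictly below and weakly to the left of $p_i$.

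Iterating this analysis along any maximal run $x_s=x_{s+1}=\ldots=x_{s'}$ of equal consecutive entries gives the dichotomy: if the common value lies in~$\Si_0$, the column indices $b_s,b_{s+1},\ldots,b_{s'}$ are strictly increasing, so no two of these boxes can share a column in $Q_k$; if it lies in~$\Si_1$, the row indices $a_s,a_{s+1},\ldots,a_{s'}$ are strictly increasing, so no two of these boxes can share a row. This excludes precisely the forbidden configurations identified above and establishes~\eqref{Eq:SuperTableaux} for~$Q_k$. The main obstacle is the second case of the dichotomy: extracting the strong conjunctive form of the constraint on $(a_{i+1},b_{i+1})$ from the biconditional stated in Lemma~\ref{L:RowBumpingLemma} requires a careful look at the shape of~$T_i$ and at how a single right insertion can enlarge it.
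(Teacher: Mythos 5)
Your proof is correct and follows essentially the same route as the paper's: both reduce the only nontrivial point (the super-tableau conditions on $\Q(w)$) to the monotonicity $x_1\leq\ldots\leq x_k$ forced by Condition~\eqref{E:Order2}, together with an application of Lemma~\ref{L:RowBumpingLemma} to a run of equal entries of the top row of~$w$, concluding that equal entries in~$\Si_0$ (resp.~$\Si_1$) occupy pairwise distinct columns (resp.\ rows) of~$\Q(w)$. The one step you flag as delicate --- upgrading the negation of the lemma's condition~(ii) to the conjunction $b_i\geq b_{i+1}$ and $a_i<a_{i+1}$ via the geometry of successive box additions to a Young diagram --- is handled no more explicitly in the paper (which simply says ``similarly''), so your account is, if anything, slightly more careful.
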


\begin{proof}
By construction, we have that~$\T(w)$ is a super tableau over~$\Si'$ since it is equal to the super tableau~$\T(y_1\ldots y_k)$. 
We still have to show that~$\Q(w)$ is a super tableau over~$\Si$.
It is sufficient to show by induction that each~$\Q_i$, for~$i = 1,\ldots, k$,  is a super tableau over~$\Si$. By definition of the signed two-rowed array, if~$x_i\in \Si_0$  is placed to the right of (resp. under or to the right of) an entry~$x_l\in\Si_0$ (resp.~$x_l\in\Si_1$) in~$Q_{i-1}$, then~$x_i$ is larger  (resp. strictly larger) than~$x_l$. Similarly, if~$x_i\in \Si_1$  is placed under (resp. under or to the right of) an entry~$x_l\in\Si_1$ (resp.~$x_l\in\Si_0$) in~$Q_{i-1}$, then~$x_i$ is larger  (resp. strictly larger) than~$x_l$. Let us now show  that if~$x_i\in \Si_0$  is placed under  of an entry~$x_l\in\Si_0$ in~$Q_{i-1}$, then~$x_i$ is strictly larger than~$x_l$. Suppose the contrary. Then, we have~$x_i=x_l$ and thus by Conditions~\eqref{E:Order1} and~\eqref{E:Order2} we have:
\[
y_l\leq y_{l+1}\leq\ldots \leq y_i,
\]
with~$y_l = y_{l+1}=\ldots = y_i$ only if~$||y_l|| = 0$. Hence, following Algorithm~\ref{A:RightInsertion}, all the boxes added starting from~$T_l$ to~$T_i$ must be in different columns which contradicts the fact that~$x_i$ and~$x_l$ belong to the same column. Similarly, we show that if~$x_i\in \Si_1$  is placed to the right  of an entry~$x_l\in\Si_1$ in~$Q_{i-1}$, then~$x_i$ is strictly larger than~$x_l$. Note finally that, by construction, the super tableaux~$\T(w)$ and~$\Q(w)$ are of the same shape and contain~$k$ boxes.
\end{proof}

\begin{Algorithm}
\label{A:ReverseRSKCorrespodence}
Let~$w$ be a signed two-rowed array on signed alphabets~$\Si$ and~$\Si'$. Starting from the pair~$(\T(w),\Q(w))$
constructed by Algorithm~\ref{A:RSkalgo}, we can recover the initial signed two-rowed array~$w$ as follows:
~\par\nobreak
\emph{Input:} A pair~$(T,Q)\in \YoungT(\Si')\times \YoungT(\Si)$ of same-shape super tableaux containing~$k$ boxes.

\emph{Output:} A signed two-rowed array~$w= \begin{pmatrix}
 x_1&\ldots &x_k\\
 y_1&\ldots &y_k
\end{pmatrix}$
on $\Si$ and~$\Si'$.

\emph{Method:}
Start with~$T_k= T$ and~$Q_k= Q$. For each~$i=k-1,\ldots, 1$, take the box filled with the largest entry~$x_{i+1}$ in~$Q_{i+1}$ and if there are several equal entries in~$\Si_0$ (resp.~$\Si_1$), the box that is farthest to the right (resp. left) is selected; apply Algorithm~\ref{A:ReverseRightInsertion} to~$T_{i+1}$ with the outer corner corresponding to that box. Let~$T_i$ be the resulting super tableau and~$y_{i+1}$ be the element that is bumped out of the top row of~$T_{i+1}$. Remove the box containing~$x_{i+1}$ from~$Q_{i+1}$ and denote the resulting super tableau tableau by~$Q_{i}$. Output the~$2\times k$ matrix~$\begin{pmatrix}
 x_1&\ldots &x_k\\
 y_1&\ldots &y_k
\end{pmatrix}$.
\end{Algorithm}

\begin{proposition}
\label{P:ReverseRSKalgoCorrectness}
Algorithm~\ref{A:ReverseRSKCorrespodence} always halts with the correct output.
\end{proposition}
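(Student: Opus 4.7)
The plan is to proceed by induction on the number $k$ of boxes of the pair $(T,Q)$, proving simultaneously that at each iteration the box of $Q_{i+1}$ selected by the rule is indeed an outer corner (so that Algorithm~\ref{A:ReverseRightInsertion} is applicable), that the intermediate tableaux $T_i$ and $Q_i$ remain super tableaux, and that the output $w$ both satisfies Conditions~\eqref{E:Order1} and~\eqref{E:Order2} and is mapped to $(T,Q)$ by Algorithm~\ref{A:RSkalgo}. The base case $k=0$ is trivial, and termination is clear since each iteration decreases the number of boxes by one.

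The heart of the argument is what I would call a \emph{last-insertion lemma}: if $w = \bigl(\begin{smallmatrix} x_1 & \dots & x_k \\ y_1 & \dots & y_k \end{smallmatrix}\bigr)$ is any signed two-rowed array with $(\T(w),\Q(w)) = (T,Q)$ in the sense of Algorithm~\ref{A:RSkalgo}, then the position of the box added at step $k$ coincides with the position in $Q$ of the maximum entry, taken farthest to the right if that entry belongs to $\Si_0$ and farthest to the left if it belongs to $\Si_1$. To establish this I would isolate the maximal suffix of indices $l, l+1, \dots, k$ for which $x_l = \dots = x_k$ all equal the maximum of $Q$, then use Condition~\eqref{E:Order2} to conclude that $y_l, \dots, y_k$ is weakly increasing when $x_k \in \Si_0$ and weakly decreasing when $x_k \in \Si_1$, with equalities restricted to $\Si'_0$ or $\Si'_1$ entries respectively. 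Iterating the Row-bumping Lemma~\ref{L:RowBumpingLemma} then yields a strictly-right/weakly-up chain of inserted positions in the $\Si_0$ case, whose endpoint is the rightmost occurrence and hence an outer corner. In the $\Si_1$ case one needs the symmetric statement (strictly-down/weakly-left chain), whose endpoint is the leftmost occurrence; this can be derived from the biconditional of Lemma~\ref{L:RowBumpingLemma} together with a direct inspection of Algorithm~\ref{A:RightInsertion} on a weakly decreasing input.

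Given this lemma, the inductive step is immediate. Applying Algorithm~\ref{A:ReverseRightInsertion} to the outer corner identified above produces a super tableau $T_{k-1}$ and a letter $y_k \in \Si'$, and removing the selected box of $Q$ leaves a super tableau $Q_{k-1}$. By induction the algorithm applied to $(T_{k-1},Q_{k-1})$ returns a signed two-rowed array $w'$ with $\RSK(w') = (T_{k-1},Q_{k-1})$, and it remains only to verify that the concatenation of $w'$ with the new column $\bigl(\begin{smallmatrix} x_k \\ y_k \end{smallmatrix}\bigr)$ obeys Conditions~\eqref{E:Order1} and~\eqref{E:Order2}. This is clear when $x_{k-1} < x_k$, and when $x_{k-1} = x_k$ the selection rule together with the last-insertion lemma applied to $w'$ forces precisely the required inequality between $y_{k-1}$ and $y_k$, with equality permitted exactly in the correct $\Si'_0$ or $\Si'_1$ case. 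The identity $\RSK(w) = (T,Q)$ then follows directly from the definition of the selected box as the $k$-th inserted one. The main obstacle I anticipate is precisely the $\Si_1$ half of the last-insertion lemma, because Lemma~\ref{L:RowBumpingLemma} as stated only handles the non-strict weakly-increasing case, and its contrapositive yields only the disjunctive conclusion ``$j \ge j'$ or $i < i'$''; obtaining the full strictly-down/weakly-left chain property requires a dedicated case analysis of the row-insertion algorithm on weakly decreasing inputs with the appropriate parity-based tie-break.
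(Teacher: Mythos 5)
Your proposal is correct and rests on the same two technical ingredients as the paper's proof---the tie-breaking rule for equal entries of $Q$ and Lemma~\ref{L:RowBumpingLemma}---but it is organized around a different pivot and proves strictly more. The paper's argument only verifies that the output matrix satisfies Conditions~\eqref{E:Order1} and~\eqref{E:Order2}: it notes $x_1\leq\dots\leq x_k$ by construction, uses the tie-break rule (together with the fact that equal $\Si_0$-entries of a super tableau form a weakly-up/strictly-right chain and equal $\Si_1$-entries a strictly-down/weakly-left chain) to pin down the relative positions of two consecutively removed boxes, and then reads Lemma~\ref{L:RowBumpingLemma} in the direction ``positions $\Rightarrow$ inequalities on the $y_i$'s''; the assertion that Algorithms~\ref{A:RSkalgo} and~\ref{A:ReverseRSKCorrespodence} are mutually inverse is left as ``clear from the constructions.'' Your induction, built on the last-insertion lemma, argues in the opposite direction (``inequalities on the $y_i$'s $\Rightarrow$ positions'') and thereby also establishes that the selected box is the last-added one---hence an outer corner, and hence that Algorithm~\ref{A:ReverseRightInsertion} genuinely undoes the final insertion. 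That is precisely the content the paper leaves implicit, so your version buys a complete proof of the bijectivity at the cost of the extra lemma.

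The obstacle you flag in the $\Si_1$ case is real but resolves without any further inspection of the insertion algorithm. For two boxes added successively to a Young diagram at positions $(i,j)$ and then $(i',j')$, exactly one of ``$j<j'$ and $i\geq i'$'' and ``$j\geq j'$ and $i<i'$'' holds: if $i'\leq i$, then once the first box is in place every row weakly above row $i$ has length at least $j$, so the second box is appended in a column strictly greater than $j$; and if $i'>i$, the partition condition forces $j'\leq j$. Consequently the disjunction ``$j\geq j'$ or $i<i'$'' obtained from the contrapositive of Lemma~\ref{L:RowBumpingLemma} already implies the full conjunction, and the strictly-down/weakly-left chain follows. (The paper's own use of the lemma in the $\Si_1$ case does not meet this issue, because passing from positions to inequalities only requires one disjunct of the negation of condition (ii).)
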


\begin{proof}
By construction, we have~$x_1\leq \ldots \leq x_k$.
Suppose now that~$x_{i-1} = x_i$, for~$i\in [k]$ with~$||x_i|| = 0$. Then by constructing, the box that is removed from~$T_i$ lies strictly to the right of the box that is removed from~$T_{i-1}$ in the next step.
Following Lemma~\ref{L:RowBumpingLemma}, the entry~$y_i$ removed first is at least as large as the entry~$y_{i-1}$ removed second, with~$y_i = y_{i-1}$, only if~$||y_i|| = 0$.
Suppose now that~$x_{i-1} = x_i$, for~$i\in [k]$ with~$||x_i|| = 1$. Then by constructing, the box that is removed from~$T_i$ lies strictly below the box that is removed from~$T_{i-1}$ in the next step.
Following Lemma~\ref{L:RowBumpingLemma}, we obtain that~$y_{i-1}\geq y_i$, with~$y_i = y_{i-1}$, only if~$||y_i|| = 1$. Hence,  the output~$2\times k$ matrix satisfies Conditions~\eqref{E:Order1} and~\eqref{E:Order2}, and then it is a signed two-rowed array on~$\Si$ and~$\Si'$.
\end{proof}

It is clear from the constructions that the two processes  described by Algorithms~\ref{A:RSkalgo}  and~\ref{A:ReverseRSKCorrespodence} are inverse to each other.
This sets up a one-to-one correspondence between signed two-rowed arrays  and pairs of same-shape super tableaux on signed alphabets~$\Si$ and~$\Si'$. We will denote by
\[
\RSK: w \mapsto (\T(w),\Q(w))
\]
this mapping. Hence, we obtain the following first main result of the article:

\begin{theorem}[Super-RSK correspondence]
\label{T:SuperRSKCorrespondence}
Let $\Si$ and~$\Si'$ be  signed alphabets. The map~$\RSK$ defines a  one-to-one correspondence  between signed two-rowed arrays and pairs of super
 tableaux on~$\Si$ and~$\Si'$, such that for any signed two-rowed array~$w$ on~$\Si$ and~$\Si'$, we have that $\T(w)$ and~$\Q(w)$ are same-shape super tableaux whose entries are the
ones of the second and the first row of~$w$, respectively.
\end{theorem}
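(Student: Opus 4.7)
The plan is to combine Propositions~\ref{P:RSKalgoCorrectness} and~\ref{P:ReverseRSKalgoCorrectness} with an explicit verification that Algorithms~\ref{A:RSkalgo} and~\ref{A:ReverseRSKCorrespodence} are mutually inverse. Those two propositions already guarantee that the outputs of each algorithm belong to the claimed classes of objects, so the only remaining content of the theorem is the bijectivity statement and the matching of entries between the two rows of~$w$ and the two super tableaux, the latter being transparent from the construction.

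My approach is induction on~$k$, the number of columns of the signed two-rowed array (equivalently, the number of boxes of the associated pair of super tableaux). The base case~$k=0$ is immediate. For the inductive step, starting from~$w$ and setting $(T_k,Q_k) := (\T(w),\Q(w))$, I have to check that the first iteration of Algorithm~\ref{A:ReverseRSKCorrespodence} peels off the correct pair~$(x_k, y_k)$ and returns $(T_{k-1}, Q_{k-1})$.

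The central technical point, and the main obstacle, concerns the tie-breaking rule: when~$Q_k$ contains several boxes filled with~$x_k$, the algorithm selects the rightmost one if~$x_k\in\Si_0$ and the leftmost one if~$x_k\in\Si_1$. I would verify that this choice indeed corresponds to the last box added in Algorithm~\ref{A:RSkalgo} as follows. Let~$x_{k-r+1}=\cdots=x_k$ be the maximal terminal block of equal~$x$-entries. Conditions~\eqref{E:Order1} and~\eqref{E:Order2} then force a monotonicity on the corresponding~$y$-entries depending on~$||x_k||$: they are weakly increasing (with equality only allowed for elements of~$\Si_0$) when~$x_k\in\Si_0$, and weakly decreasing (with equality only allowed for elements of~$\Si_1$) when~$x_k\in\Si_1$. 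Applying the Row-Bumping Lemma~\ref{L:RowBumpingLemma} iteratively to the successive insertions of~$y_{k-r+1},\ldots,y_k$ then shows that in the first case the new boxes appear in strictly increasing column indices and weakly decreasing row indices, so the last-added box is the rightmost one; and in the second case they appear in strictly increasing row indices and weakly decreasing column indices, so the last-added box is the leftmost one. This matches the tie-breaking rule exactly.

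Once the correct outer corner of~$T_k$ has been identified, Algorithm~\ref{A:ReverseRightInsertion} is by construction the inverse of the right insertion invoked in Algorithm~\ref{A:RSkalgo}, and therefore recovers both~$T_{k-1}$ and~$y_k$; removing the selected box from~$Q_k$ yields~$Q_{k-1}$. Applying the induction hypothesis to the remaining two-rowed array of length~$k-1$ closes the first direction. The converse, namely that Algorithm~\ref{A:RSkalgo} applied to the output of Algorithm~\ref{A:ReverseRSKCorrespodence} reproduces the initial pair~$(T,Q)$, follows by a symmetric inductive argument using the same invertibility of right insertion, so that~$\RSK$ is a bijection with the stated properties.
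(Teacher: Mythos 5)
Your proposal is correct and follows essentially the same route as the paper: both rest on Propositions~\ref{P:RSKalgoCorrectness} and~\ref{P:ReverseRSKalgoCorrectness} together with Lemma~\ref{L:RowBumpingLemma}, the paper then simply declaring that the two algorithms are ``clear from the constructions'' to be mutually inverse. Your inductive verification of the tie-breaking rule (rightmost box for a terminal block with $x_k\in\Si_0$, leftmost/bottom-most for $x_k\in\Si_1$, via the monotonicity of the $y$-entries forced by Conditions~\eqref{E:Order1}--\eqref{E:Order2} and the row-bumping dichotomy) is exactly the detail the paper leaves implicit, and it is sound.
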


\begin{example}
\label{Ex:RSKExample1}
Consider~$\Si= \Si'=[6]$ with signature given by~$\Si_{0}=\Si'_{0}= \{1,3,4\}$ and~$\Si_1=\Si'_1$ defined consequently.
The sequence of pairs produced during the computation of~$(\T(w), \Q(w))$ starting from the signed two-rowed array~$w=\setcounter{MaxMatrixCols}{20}
\begin{pmatrix}
 1&2&2&2&3&3&3&4&4&4&5&5&5&6&6&6\\
 2&4&2&1&4&5&6&3&5&6&4&3&1&5&4&1
\end{pmatrix}$
on~$\Si$ and~$\Si'$ is the following:
\[
\big(\emptyset
\;
,
\;
\emptyset\big),
\big(
{\ytableausetup{mathmode, boxsize=1em}
\begin{ytableau}
2
\end{ytableau}}
\;
,
\;
{\ytableausetup{mathmode, boxsize=1em}
\begin{ytableau}
1
\end{ytableau}}
\big),
\big(
{\ytableausetup{mathmode, boxsize=1em}
\begin{ytableau}
2&4
\end{ytableau}}
\;
,
\;
{\ytableausetup{mathmode, boxsize=1em}
\begin{ytableau}
1&2
\end{ytableau}}
\big),
\Big(\raisebox{0.1cm}{
{\ytableausetup{mathmode, boxsize=1em}
\begin{ytableau}
2&4\\
2
\end{ytableau}}}
\;
,
\;
\raisebox{0.1cm}{
{\ytableausetup{mathmode, boxsize=1em}
\begin{ytableau}
1&2\\
2
\end{ytableau}
}}\Big),
\bigg(\raisebox{0.25cm}{
{\ytableausetup{mathmode, boxsize=1em}
\begin{ytableau}
1&4\\
2\\
2
\end{ytableau}}}
\;
,
\;
\raisebox{0.25cm}{
{\ytableausetup{mathmode, boxsize=1em}
\begin{ytableau}
1&2\\
2\\
2
\end{ytableau}}
}\bigg),
\bigg(\raisebox{0.25cm}{
{\ytableausetup{mathmode, boxsize=1em}
\begin{ytableau}
1&4&4\\
2\\
2
\end{ytableau}}}
\;
,
\;
\raisebox{0.25cm}{
{\ytableausetup{mathmode, boxsize=1em}
\begin{ytableau}
1&2&3\\
2\\
2
\end{ytableau}}
}\bigg),
\]
\[
\bigg(\raisebox{0.25cm}{
{\ytableausetup{mathmode, boxsize=1em}
\begin{ytableau}
1&4&4&5\\
2\\
2
\end{ytableau}}}
\;
,
\;
\raisebox{0.25cm}{
{\ytableausetup{mathmode, boxsize=1em}
\begin{ytableau}
1&2&3&3\\
2\\
2
\end{ytableau}}
}\bigg),
\bigg(\raisebox{0.25cm}{
{\ytableausetup{mathmode, boxsize=1em}
\begin{ytableau}
1&4&4&5&6\\
2\\
2
\end{ytableau}}}
\;
,
\;
\raisebox{0.25cm}{
{\ytableausetup{mathmode, boxsize=1em}
\begin{ytableau}
1&2&3&3&3\\
2\\
2
\end{ytableau}}
}\bigg),
\Bigg(\raisebox{0.25cm}{
{\ytableausetup{mathmode, boxsize=1em}
\begin{ytableau}
1&3&4&5&6\\
2&4\\
2
\end{ytableau}}}
\;
,
\;
\raisebox{0.25cm}{
{\ytableausetup{mathmode, boxsize=1em}
\begin{ytableau}
1&2&3&3&3\\
2&4\\
2
\end{ytableau}}
}\Bigg),
\]
\[
\Bigg(\raisebox{0.25cm}{
{\ytableausetup{mathmode, boxsize=1em}
\begin{ytableau}
1&3&4&5&6\\
2&4&5\\
2
\end{ytableau}}}
\;
,
\;
\raisebox{0.25cm}{
{\ytableausetup{mathmode, boxsize=1em}
\begin{ytableau}
1&2&3&3&3\\
2&4&4\\
2
\end{ytableau}}
}\Bigg),
\;
\Bigg(\raisebox{0.25cm}{
{\ytableausetup{mathmode, boxsize=1em}
\begin{ytableau}
1&3&4&5&6\\
2&4&5&6\\
2
\end{ytableau}}}
\;
,
\;
\raisebox{0.25cm}{
{\ytableausetup{mathmode, boxsize=1em}
\begin{ytableau}
1&2&3&3&3\\
2&4&4&4\\
2
\end{ytableau}}
}\Bigg),
\]
\[
\Bigg(\raisebox{0.25cm}{
{\ytableausetup{mathmode, boxsize=1em}
\begin{ytableau}
1&3&4&4&6\\
2&4&5&6\\
2&5
\end{ytableau}}}
\;
,
\;
\raisebox{0.25cm}{
{\ytableausetup{mathmode, boxsize=1em}
\begin{ytableau}
1&2&3&3&3\\
2&4&4&4\\
2&5
\end{ytableau}}
}\Bigg),
\;
\Bigg(\raisebox{0.3cm}{
{\ytableausetup{mathmode, boxsize=1em}
\begin{ytableau}
1&3&3&4&6\\
2&4&4&6\\
2&5\\
5
\end{ytableau}}}
\;
,
\;
\raisebox{0.3cm}{
{\ytableausetup{mathmode, boxsize=1em}
\begin{ytableau}
1&2&3&3&3\\
2&4&4&4\\
2&5\\
5
\end{ytableau}}
}\Bigg),
\]
\[
\Bigg(\raisebox{0.3cm}{
{\ytableausetup{mathmode, boxsize=1em}
\begin{ytableau}
1&1&3&4&6\\
2&3&4&6\\
2&4\\
5\\
5
\end{ytableau}}}
\;
,
\;
\raisebox{0.3cm}{
{\ytableausetup{mathmode, boxsize=1em}
\begin{ytableau}
1&2&3&3&3\\
2&4&4&4\\
2&5\\
5\\
5
\end{ytableau}}
}\Bigg),
\;
\Bigg(\raisebox{0.5cm}{
{\ytableausetup{mathmode, boxsize=1em}
\begin{ytableau}
1&1&3&4&5\\
2&3&4&6\\
2&4&6\\
5\\
5
\end{ytableau}}}
\;
,
\;
\raisebox{0.5cm}{
{\ytableausetup{mathmode, boxsize=1em}
\begin{ytableau}
1&2&3&3&3\\
2&4&4&4\\
2&5&6\\
5\\
5
\end{ytableau}}
}\Bigg),
\]
\[
\Bigg(\raisebox{0.5cm}{
{\ytableausetup{mathmode, boxsize=1em}
\begin{ytableau}
1&1&3&4&4\\
2&3&4&5\\
2&4&6\\
5&6\\
5
\end{ytableau}}}
\;
,
\;
\raisebox{0.5cm}{
{\ytableausetup{mathmode, boxsize=1em}
\begin{ytableau}
1&2&3&3&3\\
2&4&4&4\\
2&5&6\\
5&6\\
5
\end{ytableau}}
}\Bigg),
\;
\Bigg(\raisebox{0.5cm}{
\raisebox{-0.5cm}{$\T(w)\; =\; $}
{\ytableausetup{mathmode, boxsize=1em}
\begin{ytableau}
1&1&1&4&4\\
2&3&3&5\\
2&4&4\\
5&6\\
5&6
\end{ytableau}}}
\;
,
\;
\raisebox{0.5cm}{
\raisebox{-0.5cm}{$\Q(w)\; =\; $}
{\ytableausetup{mathmode, boxsize=1em}
\begin{ytableau}
1&2&3&3&3\\
2&4&4&4\\
2&5&6\\
5&6\\
5&6
\end{ytableau}}
}\Bigg).
\]
By applying Algorithm~\ref{A:ReverseRSKCorrespodence} on~$(\T(w),\Q(w))$,  we recover the initial signed two-rowed array~$w$.
\end{example}

\begin{example}
\label{Ex:RSKExample2}
Let~$\Si=\Si' = \Zb_{>0}$  with signature given by $\Si_0$ (resp~$\Si'_1$) the set of even numbers and~$\Si_1$ (resp.~$\Si'_0$) defined consequently. The sequence of pairs produced during the computation of~$(\T(w)$ and~$\Q(w)$ starting from the signed two-rowed array~$w=\setcounter{MaxMatrixCols}{20}
\begin{pmatrix}
 1&1&2&2&3&3&4&4\\
 3&2&1&2&4&3&1&2
\end{pmatrix}$ on~$\Si$ and~$\Si'$ is the following:
\[
\big(\emptyset
\;
,
\;
\emptyset\big),
\;
\big(
{\ytableausetup{mathmode, boxsize=1em}
\begin{ytableau}
3
\end{ytableau}}
\;
,
\;
{\ytableausetup{mathmode, boxsize=1em}
\begin{ytableau}
1
\end{ytableau}}
\big),
\;
\big(
{\ytableausetup{mathmode, boxsize=1em}
\begin{ytableau}
2\\ 
3
\end{ytableau}}
\;
,
\;
{\ytableausetup{mathmode, boxsize=1em}
\begin{ytableau}
1\\
1
\end{ytableau}}
\big),
\;
\Big(\raisebox{0.1cm}{
{\ytableausetup{mathmode, boxsize=1em}
\begin{ytableau}
1\\
2\\
3
\end{ytableau}}}
\;
,
\;
\raisebox{0.1cm}{
{\ytableausetup{mathmode, boxsize=1em}
\begin{ytableau}
1\\
1\\
2
\end{ytableau}
}}\Big),
\;
\bigg(\raisebox{0.25cm}{
{\ytableausetup{mathmode, boxsize=1em}
\begin{ytableau}
1&2\\
2\\
3
\end{ytableau}}}
\;
,
\;
\raisebox{0.25cm}{
{\ytableausetup{mathmode, boxsize=1em}
\begin{ytableau}
1&2\\
1\\
2
\end{ytableau}}
}\bigg),
\;
\bigg(\raisebox{0.25cm}{
{\ytableausetup{mathmode, boxsize=1em}
\begin{ytableau}
1&2&4\\
2\\
3
\end{ytableau}}}
\;
,
\;
\raisebox{0.25cm}{
{\ytableausetup{mathmode, boxsize=1em}
\begin{ytableau}
1&2&3\\
1\\
2
\end{ytableau}}
}\bigg),
\]
\[
\bigg(\raisebox{0.25cm}{
{\ytableausetup{mathmode, boxsize=1em}
\begin{ytableau}
1&2&3\\
2&4\\
3
\end{ytableau}}}
\;
,
\;
\raisebox{0.25cm}{
{\ytableausetup{mathmode, boxsize=1em}
\begin{ytableau}
1&2&3\\
1&3\\
2
\end{ytableau}}
}\bigg),
\;
\bigg(\raisebox{0.25cm}{
{\ytableausetup{mathmode, boxsize=1em}
\begin{ytableau}
1&1&3\\
2&4\\
2\\
3
\end{ytableau}}}
\;
,
\;
\raisebox{0.25cm}{
{\ytableausetup{mathmode, boxsize=1em}
\begin{ytableau}
1&2&3\\
1&3\\
2\\
4
\end{ytableau}}
}\bigg),
\;
\bigg(
\raisebox{0.25cm}{
\raisebox{-0.2cm}{$\T(w)\; =\; $}
{\ytableausetup{mathmode, boxsize=1em}
\begin{ytableau}
1&1&2\\
2&3\\
2&4\\
3
\end{ytableau}}}
\;
,
\;
\raisebox{0.25cm}{
\raisebox{-0.2cm}{$\Q(w)\; =\; $}
{\ytableausetup{mathmode, boxsize=1em}
\begin{ytableau}
1&2&3\\
1&3\\
2&4\\
4
\end{ytableau}}
}\bigg).
\]
By applying Algorithm~\ref{A:ReverseRSKCorrespodence} on~$(\T(w),\Q(w))$,  we recover the initial signed two-rowed array~$w$. 
\end{example}

\subsubsection{The symmetry property}
Let~$w$ be a signed two-rowed array on signed alphabets~$\Si$ and~$\Si'$.
The \emph{inverse} of~$w$, denoted by~$w^{\text{inv}}$, is the  signed two-rowed array on~$\Si'$ and~$\Si$
obtained from~$w$ by \emph{exchanging} the rows of~$w$, that is, writing the second row of~$w$ as the first row and the first row of $w$ as the second row, and by sorting the new couples on~$\Si'\times\Si$ according to Conditions~\eqref{E:Order1} and~\eqref{E:Order2}. 
For instance, the inverse of the signed two-rowed array~$w$ of Example~\ref{Ex:RSKExample1} (resp.~Example~\ref{Ex:RSKExample2}) is the following:
 \[w^{\text{inv}}\;=\;
\setcounter{MaxMatrixCols}{20}
\begin{pmatrix}
 1&1&1&2&2&3&3&4&4&4&4&5&5&5&6&6\\
 2&5&6&2&1&4&5&2&3&5&6&6&4&3&4&3
\end{pmatrix}
\quad
\big(\text{ resp. } w^{\text{inv}}\;=\;
\setcounter{MaxMatrixCols}{20}
\begin{pmatrix}
 1&1&2&2&2&3&3&4\\
 2&4&4&2&1&1&3&3
\end{pmatrix}\big).
\]

We say that~$w$ has \emph{symmetry with respect to the map}~$\RSK$  if it satisfies the following property:

\begin{property} 
If~$\RSK(w)= (\T(w),\Q(w))$  then~$\RSK(w^{\text{inv}}) = (\Q(w),\T(w))$.
\end{property}

\begin{theorem}[Symmetry of the super-RSK correspondence]
\label{T:SymmetryRSK}
Let~$\Si$ and~$\Si'$ be signed alphabets. All signed two-rowed arrays on~$\Si$ and~$\Si'$ have symmetry with respect to the super-RSK correspondence map~$\RSK$.
\end{theorem}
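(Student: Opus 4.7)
The plan is to follow the matrix-ball construction strategy: associate to each signed two-rowed array $w$ a geometric object $\Ba(w)$ (the signed ball array) from which both $\T(w)$ and $\Q(w)$ can be read off in a manifestly symmetric manner. Once this is done, the symmetry follows because taking $w^{\text{inv}}$ precisely corresponds to transposing $\Ba(w)$ (swapping its rows and columns), while the recipes that extract $\T$ and $\Q$ from the ball array are symmetric under this transposition.

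First I would make $\Ba(w)$ fully precise: its rows are indexed by the elements appearing in the top row of $w$ (listed from smallest to largest, elements of $\Si_1$ being repeated once for each occurrence in $w$, elements of $\Si_0$ listed only once), and columns by the elements of the bottom row indexed analogously using $\Si'_0$ and $\Si'_1$. For each column $(x_i,y_i)$ of $w$ one then drops a ball at the prescribed position, and Conditions~\eqref{E:Order1}--\eqref{E:Order2} ensure that this stacking rule is compatible and well defined. Next, I would define the layering procedure: the first layer consists of ``south-east corner'' balls (no other ball strictly south-east of them, where the meaning of ``strict'' is tuned to the $\Zb_2$-grade), these are labelled with $1$; from the set of leftmost columns and topmost rows of the first layer one creates new balls in a fresh array, the layers are defined recursively, and the super tableaux $\T(\Ba(w))$ and $\Q(\Ba(w))$ are then assembled by letting the $k$-th row list, respectively, the column indices of left-most occurrences and the row indices of top-most occurrences of the label $k$.

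The heart of the proof is Proposition~\ref{P:MatrixBallConstruction}, namely $(\T(\Ba(w)), \Q(\Ba(w))) = (\T(w), \Q(w))$. I would prove it by induction on the number $k$ of columns of $w$, matching the processing of the last column $(x_k,y_k)$ with the right insertion $T_{k-1}\insr{} y_k$ in Algorithm~\ref{A:RightInsertion}: the row-bumping described by Lemma~\ref{L:RowBumpingLemma} must mirror exactly the propagation of the newly introduced ball through the successive layers of $\Ba(w)$, the two regimes of the bumping rule corresponding to the two $\Zb_2$-grade cases of the ordering. Recording the bumped letter as the ``left-most column'' at each layer produces the same entries as Schensted insertion, and recording the row index of the newly dropped ball at each layer reproduces the placement prescribed by Algorithm~\ref{A:RSkalgo} for $\Q(w)$. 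The main obstacle is the bookkeeping around the tiebreaking rules: one must verify case by case that ``strict'' versus ``weak'' comparisons in the layering rules correspond precisely to the provisos ``only if $||y||=0$'' and ``only if $||y||=1$'' appearing in~\eqref{E:Order1}--\eqref{E:Order2} and in the insertion algorithm, so that, in particular, repeated entries of $\Si_0$ land in different columns and repeated entries of $\Si_1$ land in different rows, as required by Proposition~\ref{P:RSKalgoCorrectness}.

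Once Proposition~\ref{P:MatrixBallConstruction} is in place, the symmetry is almost immediate. By construction $w^{\text{inv}}$ is obtained by swapping the two rows of $w$ and re-sorting the columns, and its associated ball array $\Ba(w^{\text{inv}})$ is just the transpose of $\Ba(w)$, with the roles of $\Si$ and $\Si'$ (hence of $\Si_0,\Si_1$ versus $\Si'_0,\Si'_1$) interchanged coherently with the ordering~\eqref{E:Order1}. The layering procedure is invariant under this transposition because the south-east corner condition, together with the associated strict/weak tiebreakers, is symmetric in rows and columns provided one also transposes the signature. The roles of ``left-most column'' and ``top-most row'' are consequently exchanged, yielding
\[
\T(\Ba(w^{\text{inv}})) \;=\; \Q(\Ba(w))
\qquad\text{and}\qquad
\Q(\Ba(w^{\text{inv}})) \;=\; \T(\Ba(w)).
\]
Combined with Proposition~\ref{P:MatrixBallConstruction}, this gives $\RSK(w^{\text{inv}}) = (\Q(w), \T(w))$, which is exactly the symmetry property, for every signed two-rowed array on $\Si$ and $\Si'$.
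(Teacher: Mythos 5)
Your proposal follows essentially the same route as the paper: it builds the signed ball array $\Ba(w)$, reduces the theorem to Proposition~\ref{P:MatrixBallConstruction} (proved by induction on the number of columns by matching the newly added ball for $(x_k,y_k)$ with the Schensted insertion of $y_k$, exactly as in the paper's argument via Property~\ref{Pr:MatrixBallConstruction}), and concludes from the row--column symmetry of the construction under passing to $w^{\text{inv}}$. The only slip is in your description of the first layer: the balls labelled $1$ are those with no ball to their \emph{northwest} (each ball receives the smallest integer exceeding all labels occurring northwest of it), not those with no ball strictly to their south-east, so as written your parenthetical selects the wrong set of balls, although the rest of your sketch (reading off left-most columns and top-most rows per label) uses the correct convention.
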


The rest of this section is devoted to prove this result. We  give a geometrical interpretation of the super-RSK correspondence following Fulton's matrix-ball construction,~\cite{Fulton97}, for the non-signed case.
This construction will allow us to prove Theorem~\ref{T:SymmetryRSK} using the symmetry of the resulting signed ball array corresponding to a signed two-rowed array.

\subsection{Super matrix-ball construction}
\label{SS:SuperMatrixBallConstruction}
Let~$\Si$ and~$\Si'$ be signed alphabets.
A \emph{signed ball array on~$\Si$ and~$\Si'$} is a rectangular array of balls filled with positive integers, whose rows (resp. columns) are indexed with elements of~$\Si$ (resp.~$\Si'$), from the smaller to the bigger one,  allowing the repetition only of elements in~$\Si_1$ (resp.~$\Si'_1$) and where many balls can occur in the same position. 
A signed ball array  is  \emph{empty} if it does not contain any ball.
For instance, consider~$\Si=\Si' = [4]$  with signature given by $\Si_0$ (resp.~$\Si'_1$) the set of even numbers and~$\Si_1$ (resp.~$\Si'_0$) defined consequently. The rectangular array in Figure~\ref{SignedBallArray} is a signed ball array on~$\Si$ and~$\Si'$.
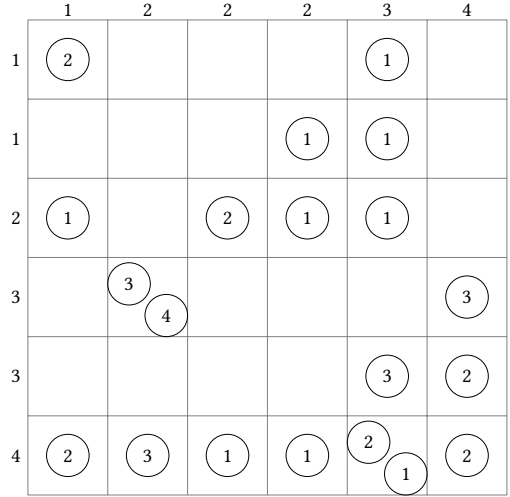
\begin{figure}[h!]
\centering
\scalebox{0.7}{
\begin{tikzpicture}
\draw [very thin, gray] (0,0) grid[step=1.5] (9,9);
\draw (0.75,9.45) node[below] {$\footnotesize 1$};
\draw (2.25, 9.45) node[below] {$\footnotesize 2$};
\draw (3.75, 9.45) node[below] {$\footnotesize 2$};
\draw (5.25, 9.45) node[below] {$\footnotesize 2$};
\draw (6.75, 9.45) node[below] {$\footnotesize 3$};
\draw (8.25, 9.45) node[below] {$\footnotesize 4$};
\draw (0,8.25) node[left] {$\footnotesize 1$};
\draw (0,6.75) node[left] {$\footnotesize 1$};
\draw (0,5.25) node[left] {$\footnotesize 2$};
\draw (0,3.75) node[left] {$\footnotesize 3$};
\draw (0,2.25) node[left] {$\footnotesize 3$};
\draw (0,0.75) node[left] {$\footnotesize 4$};
\draw (0.75,8.25) circle (0.4);
\draw (0.75,8.25) node {$\footnotesize 2$};
\draw (0.75,5.25) circle (0.4);
\draw (0.75,5.25) node {$\footnotesize 1$};
\draw (0.75,0.75) circle (0.4);
\draw (0.75,0.75) node {$\footnotesize 2$};
\draw (2.25,0.75) circle (0.4);
\draw (2.25,0.75) node {$\footnotesize 3$};
\draw (1.9,4) circle (0.4);
\draw (1.9,4) node {$\footnotesize 3$};
\draw (2.6,3.4) circle (0.4);
\draw (2.6,3.4) node {$\footnotesize 4$};
\draw  (3.75,0.75) circle (0.4) ;
\draw (3.75,0.75) node {$\footnotesize 1$};
\draw (3.75,5.25) circle (0.4);
\draw (3.75,5.25) node {$\footnotesize 2$};
\draw (5.25,6.75) circle (0.4);
\draw (5.25,6.75) node {$\footnotesize 1$};
\draw (5.25,5.25) circle (0.4);
\draw (5.25,5.25) node {$\footnotesize 1$};
\draw (5.25,0.75) circle (0.4) ;
\draw  (5.25,0.75) node {$\footnotesize 1$};
\draw (6.75,8.25) circle (0.4) ;
\draw (6.75,8.25) node {$\footnotesize 1$};
\draw (6.75,6.75) circle (0.4) ;
\draw  (6.75,6.75) node {$\footnotesize 1$};
\draw (6.75,5.25) circle (0.4) ;
\draw  (6.75,5.25) node {$\footnotesize 1$};
\draw (6.75,2.25) circle (0.4) ;
 \draw  (6.75,2.25) node {$\footnotesize 3$};
  \draw (6.4,1) circle (0.4) ;
 \draw  (6.4,1) node {$\footnotesize 2$};
  \draw (7.1,0.4) circle (0.4) ;
 \draw  (7.1,0.4) node {$\footnotesize 1$};
 \draw (8.25,3.75) circle (0.4) ;
 \draw  (8.25,3.75) node {$\footnotesize 3$};
  \draw (8.25,2.25) circle (0.4) ;
 \draw  (8.25,2.25) node {$\footnotesize 2$};
  \draw (8.25,0.75) circle (0.4) ;
 \draw  (8.25,0.75) node {$\footnotesize 2$};
\end{tikzpicture}
}
\caption{Example of a signed ball array.}
\label{SignedBallArray}
\end{figure}

We will use the following notations to describe the relative positions of two boxes in a signed ball array.
A box $b'$ is \emph{West} (resp. \emph{west}) of a box~$b$ if the column of~$b'$ is strictly to the left of (resp. left of or equal to) the column of~$b$.
A box $b'$ is \emph{North} (resp. \emph{north}) of a box~$b$ if the row of~$b'$ is strictly above (resp. above or equal to) the row of~$b$.
Similarly, we define the other positions corresponding to the east and south directions using capital and small letters to denote strict and weak positions.
A box $b'$ is \emph{Northwest} of a box~$b$ if the row of $b'$ is strictly above to the row of $b$, and the column of~$b'$ is left or equal to the column of $b$.
Similarly, we define the other combinations of positions  corresponding to the four cardinal directions using capital and small letters to denote strict and weak inequalities.

\subsubsection{The matrix-ball construction}
\label{SSS:MatrixBallConstruction}
Let~$\Si$ and~$\Si'$ be  signed alphabets. We will associate to each signed two-rowed array~$w= \begin{pmatrix}
 x_1&\ldots &x_k\\
 y_1&\ldots &y_k
\end{pmatrix}$  on~$\Si$ and~$\Si'$,   a signed ball array, denoted by~$\Ba(w)$, whose rows (resp. columns) are indexed with the elements of the first (resp. second) row of~$w$ and where the indices from~$\Si_1$ and~$\Si'_1$ are repeated as many times as they appear in~$w$, as described in the following three steps:
\medskip

\noindent {\bf Step~1}. 
We start with an empty signed ball array, whose rows (resp. columns) are indexed with the elements of the first (resp. second) row of~$w$ and where the indices from~$\Si_1$ and~$\Si'_1$ are repeated as many times as they appear in~$w$. 
We then associate to each couple~$(x_i, y_i)$ in~$w$ for~$i=1,\ldots k$, a ball in a box of the initial empty signed ball array according to the following four cases:
\begin{enumerate}[\bf i)]
\item Suppose~$(x_i,y_i)\in\Si_0\times \Si'_0$.
Following Condition~\eqref{E:Order2}, equal couples of this form can occur in~$w$. For each couple~$(x_i, y_i)$ we associate a ball in the box corresponding to the row indexed with~$x_i$ and to the column indexed with~$y_i$ in the signed ball array. In this case, the number of balls in the same position~$(x_i, y_i)$ is equal to the multiplicity of the couple~$(x_i, y_i)$ in~$w$.
\item Suppose~$(x_i, y_i)\in \Si_1\times \Si'_1$. 
Following Condition~\eqref{E:Order2},  equal couples of this form can occur in~$w$.
For each couple~$(x_i, y_i)$ we associate a ball in the empty box of the signed ball array corresponding to the topmost row indexed with~$x_i$ and the rightmost column indexed with~$y_i$ such that only one ball can occur in the same position, and if there are many rows (rep. columns) indexed with~$x_i$ (resp.~$y_i$) we choose the topmost (resp. rightmost) row (resp. column) that does not contain any ball.
\item Suppose~$(x_i, y_i)\in\Si_0\times \Si'_1$.
Following Condition~\eqref{E:Order2}, we can not have equal couples of this form in~$w$. We add a ball in the empty box of the signed ball array that corresponds to the row indexed with~$x_i$ and to the rightmost column indexed with~$y_i$, and if there are many columns that are indexed with~$y_i$  we choose the rightmost one that does not contain any ball.
\item Suppose~$(x_i, y_i)\in\Si_1\times \Si'_0$. Following Condition~\eqref{E:Order2}, we can not have equal couples of this form in~$w$. We add a ball in the empty box of the signed ball array that corresponds to the topmost row indexed with~$x_i$ and to the column indexed with~$y_i$, and if there are many rows that are indexed with~$x_i$, we choose the topmost one that does not contain any ball.
\end{enumerate}

\noindent {\bf Step~2}.
If many balls occur in the same position, then we order them arbitrarily by arranging them diagonally from NorthWest to SouthEast. A ball is \emph{northwest} of another one if it is in the same position and NorthWest in this arrangement, or its row and column positions are less than or equal to those of the second ball with at least one inequality strict. The \emph{bottom-rightmost} ball of a signed ball array is the ball in the position that corresponds to the bottom-most row and to the rightmost column of the given signed ball array, and if many balls occur in this position it corresponds to the last one to the southeast in the corresponding diagonal arrangement. Similarly, we define the \emph{top-leftmost ball} of a signed ball array.

Working from the top-leftmost ball to the bottom-rightmost ball and starting with the first row of the resulting signed ball array, we number all the balls with positive integers by filling each ball by the smallest integer that is larger than all the integers occurring in the balls to the northwest, such that the balls in the same position are numbered with consecutive integers. More precisely, a ball is numbered  with~$1$ if there are no balls northwest of it. 
A ball is numbered with a positive integer~$i$ if the preceding ball in the same position is numbered with the integer~$i-1$, or if the ball is the first one in a given position and the largest number occurring in a ball northwest of the given position is the integer~$i-1$. The resulting signed ball array is denoted by~$\Ba^1(w)$.

For instance, Figure~\ref{SignedBallArray1} (resp. Figure~\ref{SignedBallArray2})  represents  the signed ball array~$\Ba^1(w)$ corresponding to the signed two-rowed array~$w$ of Example~\ref{Ex:RSKExample1} (resp.~Example~\ref{Ex:RSKExample2}).

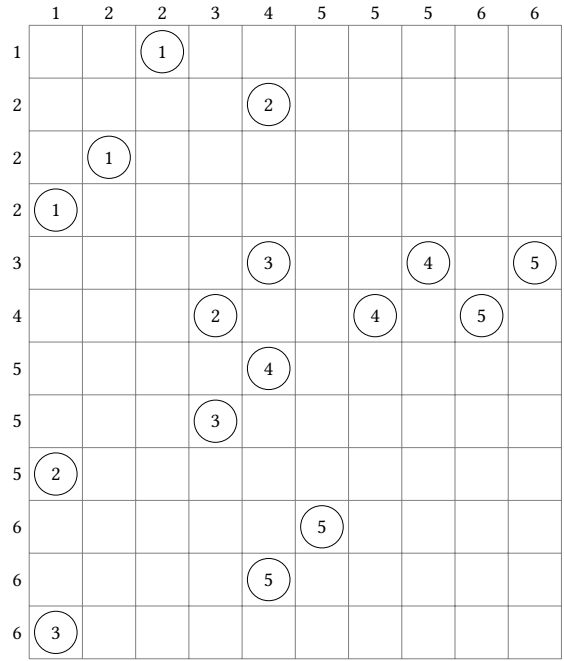
\begin{figure}[h!]
\centering
\scalebox{0.7}{
\begin{tikzpicture}
\draw [very thin, gray] (0,0) grid[step=1] (10,12);
\draw (0.5,12.5) node[below] {$\footnotesize 1$};
\draw (1.5,12.5) node[below] {$\footnotesize 2$};
\draw (2.5,12.5) node[below] {$\footnotesize 2$};
\draw (3.5,12.5) node[below] {$\footnotesize 3$};
\draw (4.5,12.5) node[below] {$\footnotesize 4$};
\draw (5.5,12.5) node[below] {$\footnotesize 5$};
\draw (6.5,12.5) node[below] {$\footnotesize 5$};
\draw (7.5,12.5) node[below] {$\footnotesize 5$};
\draw (8.5,12.5) node[below] {$\footnotesize 6$};
\draw (9.5,12.5) node[below] {$\footnotesize 6$};
\draw (0,11.5) node[left] {$\footnotesize 1$};
\draw (0,10.5) node[left] {$\footnotesize 2$};
\draw (0,9.5) node[left] {$\footnotesize 2$};
\draw (0,8.5) node[left] {$\footnotesize 2$};
\draw (0,7.5) node[left] {$\footnotesize 3$};
\draw (0,6.5) node[left] {$\footnotesize 4$};
\draw (0,5.5) node[left] {$\footnotesize 5$};
\draw (0,4.5) node[left] {$\footnotesize 5$};
\draw (0,3.5) node[left] {$\footnotesize 5$};
\draw (0,2.5) node[left] {$\footnotesize 6$};
\draw (0,1.5) node[left] {$\footnotesize 6$};
\draw (0,0.5) node[left] {$\footnotesize 6$};
  \draw (0.5,8.5) circle (0.4) ;
 \draw (0.5,8.5) node {$\footnotesize 1$};
 \draw (0.5,3.5) circle (0.4) ;
 \draw (0.5,3.5) node {$\footnotesize 2$};
  \draw (0.5,0.5) circle (0.4) ;
 \draw (0.5,0.5) node {$\footnotesize 3$};
 \draw (1.5,9.5) circle (0.4) ;
 \draw (1.5,9.5) node {$\footnotesize 1$};
 \draw (2.5,11.5) circle (0.4) ;
 \draw (2.5,11.5) node {$\footnotesize 1$};
 \draw (3.5,6.5) circle (0.4) ;
 \draw (3.5,6.5) node {$\footnotesize 2$};
 \draw (3.5,4.5) circle (0.4) ;
 \draw (3.5,4.5) node {$\footnotesize 3$};
 \draw (4.5,10.5) circle (0.4) ;
 \draw (4.5,10.5) node {$\footnotesize 2$};
 \draw (4.5,7.5) circle (0.4) ;
 \draw (4.5,7.5) node {$\footnotesize 3$};
 \draw (4.5,5.5) circle (0.4) ;
 \draw (4.5,5.5) node {$\footnotesize 4$};
  \draw (4.5,1.5) circle (0.4) ;
 \draw (4.5,1.5) node {$\footnotesize 5$};
\draw (5.5,2.5) circle (0.4) ;
 \draw (5.5,2.5) node {$\footnotesize 5$};
 \draw (6.5,6.5) circle (0.4) ;
 \draw (6.5,6.5) node {$\footnotesize 4$};
 \draw (7.5,7.5) circle (0.4) ;
 \draw (7.5,7.5) node {$\footnotesize 4$};
 \draw (8.5,6.5) circle (0.4) ;
 \draw (8.5,6.5) node {$\footnotesize 5$};
 \draw (9.5,7.5) circle (0.4) ;
 \draw (9.5,7.5) node {$\footnotesize 5$};
\end{tikzpicture}
}
\caption{$\Ba^1(w)$ corresponding to~$w$ of Example~\ref{Ex:RSKExample1}.}
\label{SignedBallArray1}
\end{figure}

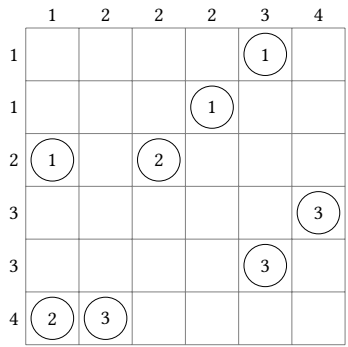
\begin{figure}[h!]
\centering
\scalebox{0.7}{
\begin{tikzpicture}
\draw [very thin, gray] (0,0) grid[step=1] (6,6);
\draw (0.5, 6.5) node[below] {$\footnotesize 1$};
\draw (1.5, 6.5) node[below] {$\footnotesize 2$};
\draw (2.5, 6.5) node[below] {$\footnotesize 2$};
\draw (3.5, 6.5) node[below] {$\footnotesize 2$};
\draw (4.5, 6.5) node[below] {$\footnotesize 3$};
\draw (5.5, 6.5) node[below] {$\footnotesize 4$};
\draw (0,5.5) node[left] {$\footnotesize 1$};
\draw (0,4.5) node[left] {$\footnotesize 1$};
\draw (0,3.5) node[left] {$\footnotesize 2$};
\draw (0,2.5) node[left] {$\footnotesize 3$};
\draw (0,1.5) node[left] {$\footnotesize 3$};
\draw (0,0.5) node[left] {$\footnotesize 4$};
  \draw (0.5,3.5) circle (0.4);
 \draw (0.5,3.5) node {$\footnotesize 1$};
  \draw (0.5,0.5) circle (0.4);
 \draw (0.5,0.5) node {$\footnotesize 2$};
\draw (1.5,0.5) circle (0.4);
 \draw (1.5,0.5) node {$\footnotesize 3$};
 \draw (2.5,3.5) circle (0.4);
 \draw (2.5,3.5) node {$\footnotesize 2$};
\draw (3.5,4.5) circle (0.4);
 \draw (3.5,4.5) node {$\footnotesize 1$};
 \draw (4.5,5.5) circle (0.4) ;
 \draw (4.5,5.5) node {$\footnotesize 1$};
\draw (4.5,1.5) circle (0.4) ;
 \draw  (4.5,1.5) node {$\footnotesize 3$};
 \draw (5.5,2.5) circle (0.4) ;
 \draw  (5.5,2.5) node {$\footnotesize 3$};
\end{tikzpicture}
}
\caption{$\Ba^1(w)$ corresponding to~$w$ of Example~\ref{Ex:RSKExample2}.}
\label{SignedBallArray2}
\end{figure}

\noindent {\bf Step~3}.
If there are~$k>1$ balls filled with the same integer~$i$ in the resulting signed ball array~$\Ba^1(w)$, then they belong by construction to a string from SouthWest to NorthEast. We then introduce new~$k-1$ balls by putting a ball to the right of each ball in the string but the last, directly under the next ball. 
We will use a new color for the added balls as illustrated in Figure~\ref{SignedBallArray3}.
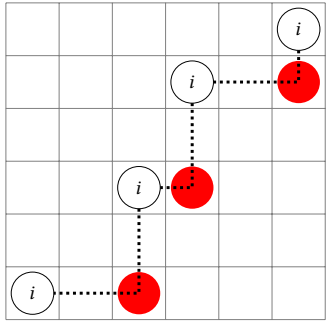
\begin{figure}[h!]
\centering
\scalebox{0.7}{
\begin{tikzpicture}
\draw [very thin, gray] (0,0) grid[step=1] (6,6);
\draw (0.5,0.5) circle (0.4);
\draw (0.5,0.5) node {$\footnotesize i$};
\draw (2.5,2.5) circle (0.4);
\draw (2.5,2.5) node {$\footnotesize i$};
\fill [red] (2.5,0.5) circle (0.4);
\fill [red] (3.5,2.5) circle (0.4);
 \draw (3.5,4.5) circle (0.4);
 \draw (3.5,4.5) node {$\footnotesize i$};
\draw (5.5,5.5) circle (0.4);
 \draw (5.5,5.5) node {$\footnotesize i$};
\fill [red] (5.5,4.5) circle (0.4);
\draw[ultra thick, dotted] (5.5,5.1)  -- (5.5,4.5)-- (3.9,4.5);
\draw[ultra thick, dotted] (3.5,4.1) --  (3.5,2.5) --  (2.9,2.5);
\draw[ultra thick, dotted]  (2.5, 2.1) --(2.5, 0.5)--(0.9,0.5);
\end{tikzpicture}
}
\caption{String for new colored balls}
\label{SignedBallArray3}
\end{figure}
We do the same for all the balls filled with the same integer.
We then number the new added balls as per~{\bf Step~2} without taking into consideration the numbering of the initial non-colored balls and by just acting on the new balls from the top-leftmost ball to the bottom-rightmost one. We obtain a new signed ball array, denoted by~$\Ba^2(w)$, that contains the initial non-colored balls and the new colored ones.
We repeat the same process on the new added balls and we construct~$\Ba^3(w)$ from~$\Ba^2(w)$ by adding new colored balls and by numbering them, and so on, stopping when no two balls appear in~$\Ba^k(w)$ for any~$k>1$ with the same number. The resulting signed ball array is denoted by~$\Ba(w)$. Note that we will use the same color for all the colored balls added to compute~$\Ba^k(w)$ from~$\Ba^{k-1}(w)$ for any~$k>1$.
We  denote by~$\Ba_{0}(w)$ the signed ball array obtained from~$\Ba(w)$ by eliminating the initial non colored balls and by keeping only the new colored ones, and  by~$\Ba^{1}_{0}(w)$ the signed ball array obtained from~$\Ba_{0}(w)$ by keeping only the colored balls added to compute~$\Ba^2(w)$ from~$\Ba^1(w)$ and by eliminating all the other colored balls. 
For instance, Figure~\ref{SignedBallArray4} (resp. Figure~\ref{SignedBallArray5} ) represents the signed ball array~$\Ba(w)$ corresponding to the signed two-rowed array~$w$ of Example~\ref{Ex:RSKExample1} (resp. Example~\ref{Ex:RSKExample2}).

\begin{figure}[h!]
\centering
\scalebox{0.7}{
\begin{tikzpicture}
\draw [very thin, gray] (0,0) grid[step=1] (10,12);
\draw (0.5,12.5) node[below] {$\footnotesize 1$};
\draw (1.5,12.5) node[below] {$\footnotesize 2$};
\draw (2.5,12.5) node[below] {$\footnotesize 2$};
\draw (3.5,12.5) node[below] {$\footnotesize 3$};
\draw (4.5,12.5) node[below] {$\footnotesize 4$};
\draw (5.5,12.5) node[below] {$\footnotesize 5$};
\draw (6.5,12.5) node[below] {$\footnotesize 5$};
\draw (7.5,12.5) node[below] {$\footnotesize 5$};
\draw (8.5,12.5) node[below] {$\footnotesize 6$};
\draw (9.5,12.5) node[below] {$\footnotesize 6$};
\draw (0,11.5) node[left] {$\footnotesize 1$};
\draw (0,10.5) node[left] {$\footnotesize 2$};
\draw (0,9.5) node[left] {$\footnotesize 2$};
\draw (0,8.5) node[left] {$\footnotesize 2$};
\draw (0,7.5) node[left] {$\footnotesize 3$};
\draw (0,6.5) node[left] {$\footnotesize 4$};
\draw (0,5.5) node[left] {$\footnotesize 5$};
\draw (0,4.5) node[left] {$\footnotesize 5$};
\draw (0,3.5) node[left] {$\footnotesize 5$};
\draw (0,2.5) node[left] {$\footnotesize 6$};
\draw (0,1.5) node[left] {$\footnotesize 6$};
\draw (0,0.5) node[left] {$\footnotesize 6$};
  \draw (0.5,8.5) circle (0.4) ;
 \draw (0.5,8.5) node {$\footnotesize 1$};
 \draw (0.5,3.5) circle (0.4) ;
 \draw (0.5,3.5) node {$\footnotesize 2$};
  \draw (0.5,0.5) circle (0.4) ;
 \draw (0.5,0.5) node {$\footnotesize 3$};
 \draw (1.5,9.5) circle (0.4) ;
 \draw (1.5,9.5) node {$\footnotesize 1$};
 \fill   [red] (1.5,8.5) circle (0.4) ;
 \draw  (1.5,8.5) node {$\footnotesize 1$};
 \draw (2.5,11.5) circle (0.4) ;
 \draw (2.5,11.5) node {$\footnotesize 1$};
 \fill   [red] (2.5,9.5) circle (0.4) ;
 \draw  (2.5,9.5) node {$\footnotesize 1$};
 \fill   [green] (2.5,8.5) circle (0.4) ;
 \draw  (2.5,8.5) node {$\footnotesize 1$};
 \draw (3.5,6.5) circle (0.4) ;
 \draw (3.5,6.5) node {$\footnotesize 2$};
 \draw (3.5,4.5) circle (0.4) ;
 \draw (3.5,4.5) node {$\footnotesize 3$};
 \fill   [red] (3.5,3.5) circle (0.4) ;
 \draw (3.5,3.5) node {$\footnotesize 2$};
 \fill   [red] (3.5,0.5) circle (0.4) ;
 \draw  (3.5,0.5) node {$\footnotesize 3$};
 \draw (4.5,10.5) circle (0.4) ;
 \draw (4.5,10.5) node {$\footnotesize 2$};
 \draw (4.5,7.5) circle (0.4) ;
 \draw (4.5,7.5) node {$\footnotesize 3$};
 \draw (4.5,5.5) circle (0.4) ;
 \draw (4.5,5.5) node {$\footnotesize 4$};
  \draw (4.5,1.5) circle (0.4) ;
 \draw (4.5,1.5) node {$\footnotesize 5$};
  \fill   [red] (4.5,6.5) circle (0.4) ;
 \draw (4.5,6.5) node {$\footnotesize 2$};
 \fill   [red] (4.5,4.5) circle (0.4) ;
 \draw  (4.5,4.5) node {$\footnotesize 3$};
  \fill   [green] (4.5,3.5) circle (0.4) ;
 \draw  (4.5,3.5) node {$\footnotesize 2$};
 \fill   [green] (4.5,0.5) circle (0.4) ;
 \draw  (4.5,0.5) node {$\footnotesize 3$};
\draw (5.5,2.5) circle (0.4) ;
 \draw (5.5,2.5) node {$\footnotesize 5$};
\fill   [red] (5.5,1.5) circle (0.4) ;
 \draw  (5.5,1.5) node {$\footnotesize 4$};
 \draw (6.5,6.5) circle (0.4) ;
 \draw (6.5,6.5) node {$\footnotesize 4$};
 \fill   [red] (6.5,5.5) circle (0.4) ;
 \draw  (6.5,5.5) node {$\footnotesize 3$};
 \fill   [green] (6.5,4.5) circle (0.4) ;
 \draw  (6.5,4.5) node {$\footnotesize 2$};
  \fill   [yellow] (6.5,3.5) circle (0.4) ;
 \draw  (6.5,3.5) node {$\footnotesize 1$};
 \draw (7.5,7.5) circle (0.4) ;
 \draw (7.5,7.5) node {$\footnotesize 4$};
 \fill   [red] (7.5,6.5) circle (0.4) ;
 \draw  (7.5,6.5) node {$\footnotesize 3$};
 \fill   [green] (7.5,5.5) circle (0.4) ;
 \draw  (7.5,5.5) node {$\footnotesize 2$};
 \fill   [yellow] (7.5,4.5) circle (0.4) ;
 \draw  (7.5,4.5) node {$\footnotesize 1$};
 \fill   [BlueD] (7.5,3.5) circle (0.4) ;
 \draw  (7.5,3.5) node {$\footnotesize 1$};
 \draw (8.5,6.5) circle (0.4) ;
 \draw (8.5,6.5) node {$\footnotesize 5$};
 \fill   [red] (8.5,2.5) circle (0.4) ;
 \draw  (8.5,2.5) node {$\footnotesize 4$};
 \fill   [green] (8.5,1.5) circle (0.4) ;
 \draw  (8.5,1.5) node {$\footnotesize 3$};
 \fill   [yellow] (8.5,0.5) circle (0.4) ;
 \draw  (8.5,0.5) node {$\footnotesize 2$};
 \draw (9.5,7.5) circle (0.4) ;
 \draw (9.5,7.5) node {$\footnotesize 5$};
 \fill   [red] (9.5,6.5) circle (0.4) ;
   \draw (9.5,6.5) node {$\footnotesize 4$};
   \fill   [green] (9.5,2.5) circle (0.4) ;
 \draw  (9.5,2.5) node {$\footnotesize 3$};
  \fill   [yellow] (9.5,1.5) circle (0.4) ;
 \draw  (9.5,1.5) node {$\footnotesize 2$};
\fill   [BlueD] (9.5,0.5) circle (0.4) ;
 \draw  (9.5,0.5) node {$\footnotesize 2$};
\end{tikzpicture}
}
\caption{$\Ba(w)$ corresponding to~$w$ of Example~\ref{Ex:RSKExample1}.}
\label{SignedBallArray4}
\end{figure}

\begin{figure}[h!]
\centering
\scalebox{0.7}{
\begin{tikzpicture}
\draw [very thin, gray] (0,0) grid[step=1.5] (9,9);
\draw (0.75,9.45) node[below] {$\footnotesize 1$};
\draw (2.25, 9.45) node[below] {$\footnotesize 2$};
\draw (3.75, 9.45) node[below] {$\footnotesize 2$};
\draw (5.25, 9.45) node[below] {$\footnotesize 2$};
\draw (6.75, 9.45) node[below] {$\footnotesize 3$};
\draw (8.25, 9.45) node[below] {$\footnotesize 4$};
\draw (0,8.25) node[left] {$\footnotesize 1$};
\draw (0,6.75) node[left] {$\footnotesize 1$};
\draw (0,5.25) node[left] {$\footnotesize 2$};
\draw (0,3.75) node[left] {$\footnotesize 3$};
\draw (0,2.25) node[left] {$\footnotesize 3$};
\draw (0,0.75) node[left] {$\footnotesize 4$};
  \draw (0.75,5.25) circle (0.4);
 \draw (0.75,5.25) node {$\footnotesize 1$};
  \draw (0.75,0.75) circle (0.4);
 \draw (0.75,0.75) node {$\footnotesize 2$};
\draw (2.25,0.75) circle (0.4);
 \draw (2.25,0.75) node {$\footnotesize 3$};
 \fill [red]  (3.75,0.75) circle (0.4) ;
 \draw (3.75,0.75) node {$\footnotesize 1$};
 \draw (3.75,5.25) circle (0.4);
 \draw (3.75,5.25) node {$\footnotesize 2$};
\draw (5.25,6.75) circle (0.4);
 \draw (5.25,6.75) node {$\footnotesize 1$};
\fill [red] (5.25,5.25) circle (0.4);
 \draw (5.25,5.25) node {$\footnotesize 1$};
\fill   [green] (5.25,0.75) circle (0.4) ;
 \draw  (5.25,0.75) node {$\footnotesize 1$};
 \draw (6.75,8.25) circle (0.4) ;
 \draw (6.75,8.25) node {$\footnotesize 1$};
 \fill   [red] (6.75,6.75) circle (0.4) ;
 \draw  (6.75,6.75) node {$\footnotesize 1$};
 \fill   [green] (6.75,5.25) circle (0.4) ;
 \draw  (6.75,5.25) node {$\footnotesize 1$};
\draw (6.75,2.25) circle (0.4) ;
 \draw  (6.75,2.25) node {$\footnotesize 3$};
 \fill   [red] (6.4,1) circle (0.4) ;
 \draw  (6.4,1) node {$\footnotesize 2$};
  \fill   [yellow] (7.1,0.4) circle (0.4) ;
 \draw  (7.1,0.4) node {$\footnotesize 1$};
 \draw (8.25,3.75) circle (0.4) ;
 \draw  (8.25,3.75) node {$\footnotesize 3$};
 \fill   [red] (8.25,2.25) circle (0.4) ;
 \draw  (8.25,2.25) node {$\footnotesize 2$};
 \fill   [green] (8.25,0.75) circle (0.4) ;
 \draw  (8.25,0.75) node {$\footnotesize 2$};
\end{tikzpicture}
}
\caption{$\Ba(w)$ corresponding to~$w$ of Example~\ref{Ex:RSKExample2}.}
\label{SignedBallArray5}
\end{figure}

\subsubsection{Super tableaux for signed ball arrays}
\label{SSS:TableauxSignedBallArray}

Let~$w$ be a signed two-rowed array on  signed alphabets~$\Si$ and~$\Si'$.
We denote by~$\T(\Ba(w))$ (resp.~$\Q(\Ba(w)))$  the super tableau obtained from $\Ba(w)$ such that its~$k$-th row lists the indices of the leftmost columns (resp. top-most rows) of~$\Ba^k(w)$ where each integer number occurs in the new added balls.  That is, the $i$-th entry of the first row of~$\T(\Ba(w))$ (resp.~$\Q(\Ba(w)))$) is the index of the leftmost column (resp. top-most row) in~$\Ba^1(w)$ where a ball filled with~$i$ occurs, and the $i$-th entry of the $k$-th row of~$\T(\Ba(w))$ (resp.~$\Q(\Ba(w)))$, for~$k>1$, is equal to the index of the leftmost column (resp. top-most row) in~$\Ba^k(w)$ where a new colored ball filled with~$i$ occurs.

\begin{proposition}
\label{P:MatrixBallConstruction}
Let~$w$ be a signed two-rowed array on signed alphabets~$\Si$ and~$\Si'$. The following equality~$(\T(\Ba(w)), \Q(\Ba(w))) =(\T(w), \Q(w))$ holds. 
\end{proposition}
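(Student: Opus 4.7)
The plan is to prove this by induction on the number of levels $k$ in the matrix-ball construction, which corresponds to the row index of the super tableaux $\T(w)$ and $\Q(w)$. The guiding principle is that each level $\Ba^{k}(w)$ of the construction records the state of the $k$-th row of the tableaux after all insertions of Algorithm~\ref{A:RSkalgo} have been performed, while the passage from $\Ba^{k}(w)$ to $\Ba^{k+1}(w)$ (via the diagonal string of colored balls described in Step~3) captures precisely the bumpings from the $k$-th row into the $(k{+}1)$-th row governed by Algorithm~\ref{A:RightInsertion}.

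For the base case I would show that the leftmost column index (respectively topmost row index) where label $i$ appears in $\Ba^{1}(w)$ is exactly the $i$-th entry of the top row of $\T(w)$ (respectively $\Q(w)$). Using the numbering rule of Step~2, the balls labelled $i$ in $\Ba^{1}(w)$ form a strictly SouthWest-to-NorthEast string; this is forced by the ordering of the couples $(x_i,y_i)$ in $w$ through Conditions~\eqref{E:Order1} and~\eqref{E:Order2}. Comparing this with the successive right insertions of $y_1,\dots,y_k$ starting from the empty super tableau, and tracking when a value lands in or replaces an entry of the top row, I would check that the entry finally sitting in column $i$ of the top row of $\T(w)$ is the $y$-value attached to the leftmost ball carrying label $i$, and dually for $\Q(w)$. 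The four signature cases in Step~1 of~\ref{SSS:MatrixBallConstruction} need to be checked separately but each reduces to the corresponding branch of Algorithm~\ref{A:RightInsertion}.

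For the inductive step I would fix $k\geq 1$ and argue that the signed ball array $\Ba_{0}(w)$ with only the balls added at level $k{+}1$ is itself the matrix-ball diagram associated to the signed two-rowed array whose couples record the bumpings from row $k$ into row $k{+}1$. The Row-Bumping Lemma (Lemma~\ref{L:RowBumpingLemma}) guarantees that when two consecutive balls $b,b'$ with the same label $i$ in $\Ba^{k}(w)$ give rise to bumpings, the bumped element coming from $b$ lands in a strictly earlier column than that coming from $b'$; this matches the prescription that the new colored ball associated to $b$ is placed \emph{directly under the next ball $b'$ and to its right}. By the inductive hypothesis, applied to this lower-level array, the leftmost column and topmost row indices of labels at level $k{+}1$ reproduce the $(k{+}1)$-th rows of $\T(w)$ and $\Q(w)$. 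Because the construction halts exactly when no two balls share a label—that is, when no further bumpings occur—the process terminates at the same row as $\T(w)$ and $\Q(w)$.

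The main obstacle I anticipate is the careful bookkeeping of the super signature when new balls are inserted. Unlike the classical case treated in Fulton~\cite{Fulton97}, the placement rules in Step~1 and the bumping rules in Algorithm~\ref{A:RightInsertion} split into four cases according to $(x_i,y_i)\in \Si_a\times \Si'_b$ for $a,b\in\{0,1\}$, and the ordering conventions on equal couples (topmost, rightmost, etc.) have to be matched to the strict versus weak inequalities appearing in the super row-bumping lemma. I would isolate these verifications into a small case analysis and phrase them in terms of which columns (respectively rows) can or cannot be repeated among the labels at a given level; once this dictionary is established, the induction closes without further subtleties.
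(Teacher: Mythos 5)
Your overall architecture differs from the paper's: you induct on the level $k$ of the matrix-ball construction (equivalently, on the rows of the tableaux), with the first row as base case and the derived array $\Ba_{0}(w)$ carrying the inductive step, whereas the paper inducts on the number of couples of $w$, peeling off the rightmost couple $(x_k,y_k)$ and reducing everything to a single statement (Property~\ref{Pr:MatrixBallConstruction}) asserting that adding the last ball to the diagram of $w_0$ has the same effect as performing one more insertion $\T(\Ba(w_0))\insr{}y_k$. Both decompositions are in principle viable, but as written your plan has a genuine gap: the base case is not a routine verification, it is the technical heart of the proposition. The claim that the $i$-th entry of the top row of $\T(w)$ equals the index of the leftmost column of $\Ba^1(w)$ containing a ball labelled $i$ is a statement about the result of all $k$ insertions simultaneously, and ``tracking when a value lands in or replaces an entry of the top row'' is itself an induction on the number of couples --- exactly the induction the paper runs. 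Without specifying that mechanism (in particular, identifying which ball labelled $j$ supplies the entry that $y_k$ bumps: the one in the row indexed by the maximal $x\leq x_k$ and the column indexed by the minimal $y\geq y_k$, with strictness governed by the signatures), the base case is asserted rather than proved.

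The second place where your argument is thinner than it needs to be is the identification of $\Ba^{1}_{0}(w)$ with the matrix-ball diagram of the two-rowed array of bumped pairs. Lemma~\ref{L:RowBumpingLemma} controls the relative positions of two successive bumps out of the same row, but it does not by itself show that the colored ball attached to a pair of consecutive equal labels is placed exactly where Step~1 of~\ref{SSS:MatrixBallConstruction} would place the corresponding bumped couple, in each of the four signature cases and with the topmost/rightmost tie-breaking conventions. The paper's proof isolates precisely the fact needed to close this loop --- that $(x_k,y)$ is the position of the bottom-rightmost ball of $\Ba^{1}_{0}(w)$, so that the induction hypothesis applies to the derived array with its last couple in the correct place. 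Your plan rightly names the four-case signature analysis as the main obstacle, but the specific statement that must be verified there (and that makes the recursion into $\Ba_{0}(w)$ legitimate) is missing.
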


Theorem~\ref{T:SymmetryRSK} is then a direct consequence of this proposition, since the matrix-ball construction is symmetric in the rows and columns of the resulting signed ball array. In the rest of this subsection we will prove Proposition~\ref{P:MatrixBallConstruction}. Before that, we give the following example that illustrates the symmetry property of the super-RSK correspondence for super tableaux.

\begin{example}
\label{Ex:MatrixBallConstruction}
Consider the signed two-rowed array~$w$ of Example~\ref{Ex:RSKExample1}. 
The super tableaux associated to its signed ball array~$\Ba(w)$ illustrated in Figure~\ref{SignedBallArray4} are the following:
\[
\raisebox{0.5cm}{
\raisebox{-0.5cm}{$\T(\Ba(w))\; =\; $}
{\ytableausetup{mathmode, boxsize=1em}
\begin{ytableau}
1&1&1&4&4\\
2&3&3&5\\
2&4&4\\
5&6\\
5&6
\end{ytableau}}}
\;
,
\;
\raisebox{0.5cm}{
\raisebox{-0.5cm}{$\Q(\Ba(w))\; =\; $}
{\ytableausetup{mathmode, boxsize=1em}
\begin{ytableau}
1&2&3&3&3\\
2&4&4&4\\
2&5&6\\
5&6\\
5&6
\end{ytableau}}
}
\]
which are equal to the ones of~$w$ already computed in Example~\ref{Ex:RSKExample1}:
\[
w\;=\;
\setcounter{MaxMatrixCols}{20}
\begin{pmatrix}
 1&2&2&2&3&3&3&4&4&4&5&5&5&6&6&6\\
 2&4&2&1&4&5&6&3&5&6&4&3&1&5&4&1
\end{pmatrix}
\;
\overset{\RSK}{\longleftrightarrow}
\;
\Bigg(\raisebox{0.5cm}{
\raisebox{-0.5cm}{$\T(w)\; =\; $}
{\ytableausetup{mathmode, boxsize=1em}
\begin{ytableau}
1&1&1&4&4\\
2&3&3&5\\
2&4&4\\
5&6\\
5&6
\end{ytableau}}}
\;
,
\;
\raisebox{0.5cm}{
\raisebox{-0.5cm}{$\Q(w)\; =\; $}
{\ytableausetup{mathmode, boxsize=1em}
\begin{ytableau}
1&2&3&3&3\\
2&4&4&4\\
2&5&6\\
5&6\\
5&6
\end{ytableau}}
}\Bigg).
\]
Moreover, using the matrix-ball construction, we obtain:
\[
w^{\text{inv}}\;=\;
\setcounter{MaxMatrixCols}{20}
\begin{pmatrix}
 1&1&1&2&2&3&3&4&4&4&4&5&5&5&6&6\\
 2&5&6&2&1&4&5&2&3&5&6&6&4&3&4&3
\end{pmatrix}
\;
\overset{\RSK}{\longleftrightarrow}
\;
\Bigg(\raisebox{0.5cm}{
\raisebox{-0.5cm}{$\Q(w)\; =\; $}
{\ytableausetup{mathmode, boxsize=1em}
\begin{ytableau}
1&2&3&3&3\\
2&4&4&4\\
2&5&6\\
5&6\\
5&6
\end{ytableau}}}
\;
,
\;
\raisebox{0.5cm}{
\raisebox{-0.5cm}{$\T(w)\; =\; $}
{\ytableausetup{mathmode, boxsize=1em}
\begin{ytableau}
1&1&1&4&4\\
2&3&3&5\\
2&4&4\\
5&6\\
5&6
\end{ytableau}}
}\Bigg)
\]
by switching the roles of the rows and the columns of the signed ball array of Figure~\ref{SignedBallArray4}.

Consider now the signed two-rowed array~$w$ of Example~\ref{Ex:RSKExample2}. The super tableaux associated to its signed ball array~$\Ba(w)$ illustrated in Figure~\ref{SignedBallArray5} are the following:
\[
\raisebox{0.5cm}{
\raisebox{-0.5cm}{$\T(\Ba(w))\; =\; $}
{\ytableausetup{mathmode, boxsize=1em}
\begin{ytableau}
1&1&2\\
2&3\\
2&4\\
3
\end{ytableau}}}
\;
,
\;
\raisebox{0.5cm}{
\raisebox{-0.5cm}{$\Q(\Ba(w))\; =\; $}
{\ytableausetup{mathmode, boxsize=1em}
\begin{ytableau}
1&2&3\\
1&3\\
2&4\\
4
\end{ytableau}}
}
\]
which are equal to the ones of~$w$ already computed in Example~\ref{Ex:RSKExample2}:
\[
w\;=\;
\setcounter{MaxMatrixCols}{20}
\begin{pmatrix}
 1&1&2&2&3&3&4&4\\
 3&2&1&2&4&3&1&2
\end{pmatrix}
\quad
\overset{\RSK}{\longleftrightarrow}
\quad
\Bigg(\raisebox{0.5cm}{
\raisebox{-0.5cm}{$\T(w)\; =\; $}
{\ytableausetup{mathmode, boxsize=1em}
\begin{ytableau}
1&1&2\\
2&3\\
2&4\\
3
\end{ytableau}}}
\;
,
\;
\raisebox{0.5cm}{
\raisebox{-0.5cm}{$\Q(w)\; =\; $}
{\ytableausetup{mathmode, boxsize=1em}
\begin{ytableau}
1&2&3\\
1&3\\
2&4\\
4
\end{ytableau}}
}\Bigg).
\]
Moreover, using the super matrix-ball construction, we obtain:
\[
w^{\text{inv}}\;=\;
\setcounter{MaxMatrixCols}{20}
\begin{pmatrix}
 1&1&2&2&2&3&3&4\\
 2&4&4&2&1&1&3&3
\end{pmatrix}
\quad
\overset{\RSK}{\longleftrightarrow}
\quad
\Bigg(\raisebox{0.5cm}{
\raisebox{-0.5cm}{$\Q(w)\; =\; $}
{\ytableausetup{mathmode, boxsize=1em}
\begin{ytableau}
1&2&3\\
1&3\\
2&4\\
4
\end{ytableau}}}
\;
,
\;
\raisebox{0.5cm}{
\raisebox{-0.5cm}{$\T(w)\; =\; $}
{\ytableausetup{mathmode, boxsize=1em}
\begin{ytableau}
1&1&2\\
2&3\\
2&4\\
3
\end{ytableau}}
}\Bigg)
\]
by switching the roles of the rows and the columns of the signed ball array of Figure~\ref{SignedBallArray5}.
\end{example}

\subsubsection{Proof of Proposition~\ref{P:MatrixBallConstruction}}
Let~$w= \begin{pmatrix}
 x_1&\ldots &x_k\\
 y_1&\ldots &y_k
\end{pmatrix}$
 be a signed two-rowed array on signed alphabets~$\Si$ and~$\Si'$. We show the result by induction on the number of couples in~$w$, which is equal to the number of balls in~$\Ba^1(w)$.
The result is obvious when~$w$ contains zero or one couple. Let
\[
 w_0\; :=\;  \begin{pmatrix}
 x_1&\ldots &x_{k-1}\\
 y_1&\ldots &y_{k-1}
\end{pmatrix}
 \]
be the signed two-rowed array obtained from~$w$ by eliminating its rightmost couple~$(x_k, y_k)$. 
By the induction hypothesis, we have~$\T(w_0) = \T(\Ba(w_0))$ and~$\Q(w_0) = \Q(\Ba(w_0))$.
Then it is sufficient to prove  the following property:

\begin{property}
\label{Pr:MatrixBallConstruction}
The equality~$\T(\Ba(w)) \;=\; \T(\Ba(w_0))\insr{}y_k$ holds in~$\YoungT(\Si')$, and the super tableau~$\Q(\Ba(w))$ is obtained from~$\Q(\Ba(w_0))$ by placing~$x_k$ in the box that belongs to~$\T(\Ba(w))$ but not to~$\T(\Ba(w_0))$. 
\end{property}

By construction, the signed ball array~$\Ba^1(w)$ contains one ball that is not in~$\Ba^1(w_0)$,  which we denote by~$B$, and we suppose is filled with an integer~$j$. 
This ball is the bottom-rightmost ball of~$\Ba^1(w)$ which belongs to the position corresponding to the bottom-most row indexed by~$x_k$ and to the rightmost (resp. leftmost) column indexed by~$y_k$ if~$y_k\in\Si_0$ (resp.~$y_k\in\Si_1$). Moreover, the integer~$j$ is  larger than all the numbers of the balls in its position. 

Suppose first that there are no more balls filled with~$j$ in~$\Ba^1(w)$.
In this case, all the remaining balls in~$\Ba^1(w)$  are to the NorthWest of~$B$, and then they are all filled with integers that are strictly smaller than~$j$ .
Hence when computing~$\Ba^i(w)$, for~$i>1$, no colored balls will be added using~$B$, and all the rows of~$\T(w)$ and~$\T(w_0)$ (resp.~$\Q(w)$ and~$\Q(w_0)$) below the top-most row are the same.
Moreover, the first row of~$\T(\Ba(w))$ is obtained from that of~$\T(\Ba(w_0))$ by adding~$y_k$ to the end, since~$j$ is the largest integer in~$\Ba^1(w)$, and then~$\Q(w)$ is obtained from~$\Q(w_0)$ by adding~$x_k$ to the end of its first row, showing Property~\ref{Pr:MatrixBallConstruction}. 

Suppose now that there are other balls in~$\Ba^1(w)$ filled with~$j$.
By construction, all these balls are to the NorthEast of~$B$.
Consider the ball filled with~$j$ in the position corresponding to the row indexed with~$x$ and the column indexed with~$y$ such that~$x\leq x_k$ is maximal with~$x=x_k$ only if~$x\in\Si'_1$ and~$y\geq y_k$ is minimal with~$y=y_k$ only if~$y\in\Si_1$.
When~$y_k$ is inserted in the first row of~$\T(\Ba(w_0))$, the letter~$y$ is then bumped from the~$j$-th box of this row. Indeed, the entries of the first row of~$\T(\Ba(w_0))$ are the indices of the left-most columns of~$\Ba^1(w_0)$ that contains balls filled with~$1, 2, \ldots$.
The first~$j-1$ entries of this row are less than or equal to~$y_k$, but the $j$-th entry is~$y$ which is the minimal index that satisfies~$y\geq y_k$ with~$y=y_k$ only if~$y\in\Si_1$, then the entry~$y$ is bumped from the~$j$-th box when inserting~$y_k$ in the first row of~$\T(\Ba(w_0))$. Hence the top-most  row of~$\T(\Ba(w))$ is the top-most  row of~$\T(\Ba(w_0))\insr{}y_k$.
Moreover, the super tableau formed by the rows of~$\T(\Ba(w))$ below its top-most row is by construction the super tableau~$\T(\Ba_{0}(w))$, and the super tableau formed by the rows of~$\T(\Ba(w_0))$ below its top-most row is the super tableau~$\T(\Ba_{0}(w_0))$, where~$\Ba_{0}(w_0)$ is obtained from~$\Ba(w_0)$ by eliminating the initial non colored balls and by keeping only the new colored ones. Then it is sufficient to prove that the equality~$\T(\Ba_{0}(w)) = \T(\Ba_{0}(w_0))\insr{}y$ holds in~$\YoungT(\Si')$, and the new box from this insertion is the box that belongs to~$\Q(\Ba_{0}(w))$ but not to~$\Q(\Ba_{0}(w_0))$.
This follows by induction from Property~\ref{Pr:MatrixBallConstruction}, 
provided with the fact that~$(x_k,y)$ is the position of the bottom-rightmost ball of the signed ball array $\Ba^{1}_{0}(w)$. Indeed, by construction, there are no entries below the $x_k$-th row of~$\Ba^{1}_{0}(w)$. Moreover, if there is other ball in the $x_k$-th row of~$\Ba^{1}_{0}(w)$, then this ball is added using two balls of~$\Ba^{1}(w)$ filled with an integer~$i<j$. The first ball belongs to the row indexed by~$x_k$, and the second one lies NorthEast of it, but it is NorthWest of the bottom-rightmost ball filled with~$j$, and then this second ball cannot lie in a column indexed larger than~$y$, showing the claim.

\subsection{Super-RSK correspondence and  taquin}
\label{SSS:SuperRSKSuperJeuDeTaquin}
Before we proceed to the construction of the dual super-RSK correspondence, we prove the following result which relates the super-RSK correspondence to the super jeu de taquin. This result will be useful for the sequel. 

\begin{proposition}
\label{P:SuperRSKSuperJeuDeTaquin}
Let~$w= \begin{pmatrix}
 x_1&\ldots &x_k\\
 y_1&\ldots &y_k
\end{pmatrix}$
be a signed two-rowed array on signed alphabets~$\Si$ and~$\Si'$ such that~$\RSK(w) = (\T(w),\Q(w))$, and let~$t$ be any super tableau in~$\YoungT(\Si')$.
If we compute the super tableau~$((\ldots(t \insr{} y_1)  \insr{} \ldots )\insr{} y_k)$ over~$\YoungT(\Si')$, and we add~$x_1,\ldots, x_k$ successively in the new boxes starting with an empty Young diagram of the same shape as~$t$,  then the entries~$x_1,\ldots, x_k$ form a super skew tableau~$S$ such that~$\Rec(S)=\Q(w)$.
\end{proposition}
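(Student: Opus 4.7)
The plan is to proceed by induction on the length~$k$ of the signed two-rowed array~$w$. Throughout, write~$\mu$ for the shape of~$t$, $t_i := ((\ldots (t \insr{} y_1) \insr{} \ldots ) \insr{} y_i)$ for the intermediate tableau, and~$p_i$ for the position of the new box added to~$t_{i-1}$ when~$y_i$ is inserted, so that~$S$ is obtained by placing~$x_i$ in the box~$p_i$ for~$i=1,\ldots,k$.

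The first step is to verify that~$S$ is a well-defined super skew tableau with shape the skew between~$\mu$ and the shape of~$t_k$. The argument is parallel to the one in Proposition~\ref{P:RSKalgoCorrectness}: the order conditions~\eqref{E:Order1} and~\eqref{E:Order2} on the signed two-rowed array, combined with the row-bumping Lemma~\ref{L:RowBumpingLemma} applied to two consecutive insertions~$(t_{i-1} \insr{} y_i) \insr{} y_{i+1}$, enforce that whenever~$x_i = x_{i+1}$ the positions~$p_i, p_{i+1}$ have strictly increasing columns (when~$x_i \in \Si_0$) or strictly increasing rows (when~$x_i \in \Si_1$), matching the super semistandard conditions~\eqref{Eq:SuperTableaux}.

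For the induction, the base case~$k=0$ is immediate since~$S$ and~$\Q(w)$ are both empty. Now assume the result for~$k-1$: let~$w_0$ be the truncation of~$w$ to its first~$k-1$ columns and~$S_0$ the corresponding recording super skew tableau, so that~$\Rec(S_0)=\Q(w_0)$; equivalently, by Property~\ref{P:SuperJeuDeTaquinPlactic}, $\R(S_0)\sim_{\P(\Si)}\R(\Q(w_0))$. On the other hand, Property~\ref{Pr:MatrixBallConstruction}, established during the proof of Proposition~\ref{P:MatrixBallConstruction}, describes~$\Q(w)$ as~$\Q(w_0)$ with~$x_k$ placed in the position~$\tilde p_k$ of the unique new box of~$\T(w)=\T(w_0)\insr{}y_k$ relative to~$\T(w_0)$. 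Invoking Property~\ref{P:SuperJeuDeTaquinPlactic} once more, the target equality~$\Rec(S)=\Q(w)$ reduces to~$\R(S)\sim_{\P(\Si)}\R(\Q(w))$, where these two column readings are obtained from~$\R(S_0)$ and~$\R(\Q(w_0))$ respectively by inserting the letter~$x_k$ at a specific place determined by the position of its box (namely~$p_k$ for~$\R(S)$ and~$\tilde p_k$ for~$\R(\Q(w))$).

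The main obstacle is to show that these two insertions of~$x_k$ into plactically equivalent words yield plactically equivalent words, despite~$p_k$ and~$\tilde p_k$ lying in general in different columns of their respective shapes. The proof requires comparing two row-bumping chains of Algorithm~\ref{A:RightInsertion}: the chain produced by inserting~$y_k$ into~$t_{k-1}$ (terminating at~$p_k$) and the chain produced by inserting~$y_k$ into~$\T(w_0)$ (terminating at~$\tilde p_k$). Using Lemma~\ref{L:RowBumpingLemma} together with the super Knuth relations~\eqref{Eq:SuperKnuthRelations}, one transports the extra letter~$x_k$ through the plactic equivalence class~$\R(S_0)\sim_{\P(\Si)}\R(\Q(w_0))$ to the position prescribed by~$\tilde p_k$. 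Careful bookkeeping of the signatures of the entries along the two bumping chains, guided by the signed order~\eqref{E:Order1}--\eqref{E:Order2}, is what enables this transport; this is the super analogue of the classical compatibility between row insertion and the recording side of the RSK bijection.
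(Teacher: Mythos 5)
Your overall strategy (induction on~$k$, reducing the claim to a super plactic equivalence of reading words via Property~\ref{P:SuperJeuDeTaquinPlactic}) is reasonable in outline, but the inductive step has a genuine gap exactly where the content of the proposition lies. You correctly identify the obstacle: $\R(S)$ is obtained from~$\R(S_0)$ by placing~$x_k$ in the box~$p_k$ created by inserting~$y_k$ into~$t_{k-1}$, while $\R(\Q(w))$ is obtained from~$\R(\Q(w_0))$ by placing~$x_k$ in the box~$\tilde p_k$ created by inserting~$y_k$ into~$\T(w_0)$; these are two different bumping chains in two different tableaux ($t_{k-1}$ versus~$\T(w_0)$), and the positions~$p_k$ and~$\tilde p_k$ need not agree. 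You then assert that Lemma~\ref{L:RowBumpingLemma} together with the super Knuth relations lets one ``transport''~$x_k$ across the equivalence~$\R(S_0)\sim_{\P(\Si)}\R(\Q(w_0))$. But Lemma~\ref{L:RowBumpingLemma} only compares two \emph{successive} insertions into the \emph{same} tableau; it says nothing about how the bumping chain of a fixed letter changes when the ambient tableau is replaced by a plactically equivalent one of a different shape, and nothing in the paper reduces this to ``careful bookkeeping of signatures.'' As written, the crucial equivalence is restated rather than proved; it is essentially the proposition itself.

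The paper sidesteps this local comparison entirely with a global argument: choose an auxiliary tableau~$t'$ over an alphabet~$\Si^{<}$ whose letters are all smaller than those of~$\Si$, encode~$(t,t')$ as a signed two-rowed array via Theorem~\ref{T:SuperRSKCorrespondence}, concatenate it with~$w$ to get an array~$W$ whose image under~$\RSK$ is $\bigl(t\insr{}\R(\T(w)),\,Q'\bigr)$, where~$Q'$ carries the entries of~$t'$ in the positions of~$t'$ and the entries of~$S$ in the remaining positions; then apply the symmetry theorem~\ref{T:SymmetryRSK} to~$W^{\text{inv}}$ and strip off the auxiliary letters using Lemma~\ref{L:SuperplacticLargestSmallest} to conclude~$\R(S)\sim_{\P(\Si)}\R(\Q(w))$. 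The hard combinatorial work is thus absorbed into the matrix-ball proof of symmetry, which is precisely the ingredient your argument does not invoke. To salvage your inductive scheme you would need to prove, as a separate lemma, that appending~$x_k$ at~$p_k$ to~$S_0$ and at~$\tilde p_k$ to~$\Q(w_0)$ yields super plactic equivalent skew tableaux; no such lemma is available in the paper, so the proposal is incomplete.
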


\begin{proof}
Consider a super tableau~$t'$ over~$\YoungT(\Si^{<})$ of the same shape as~$t$, where~$\Si^{<}$ is a signed alphabet whose elements are all smaller than the ones of~$\Si$. Following Theorem~\ref{T:SuperRSKCorrespondence}, the pair~$(t, t')$  corresponds  to some signed two-rowed array~$\begin{pmatrix}
 x'_1&\ldots &x'_l\\
 y'_1&\ldots &y'_l
\end{pmatrix}$.
Hence, the  signed two-rowed array~$W:=
\begin{pmatrix}
 x'_1&\ldots &x'_l& x_1&\ldots &x_k\\
 y'_1&\ldots &y'_l&y_1&\ldots &y_k
\end{pmatrix}$
corresponds to the pair~$\bigg(\big(t\insr{}\R(\T(w)\big), Q'\bigg)$,
 where~$Q'$ is a  super tableau over~$\YoungT(\Si^{<}\cup \Si)$ whose entries~$x'_1,\ldots , x'_l$ are the ones of~$t'$, and whose entries~$x_1,\ldots , x_k$ are the ones of~$S$.
Consider now the inverse~$W^{\text{inv}}$  of~$W$. Following Theorem~\ref{T:SymmetryRSK}, the signed two-rowed array~$W^{\text{inv}}$ corresponds to the pair~$\bigg(Q', \big(t\insr{}\R(\T(w)\big)\bigg)$, and the signed two-rowed array obtained from~$W^{\text{inv}}$ by eliminating the couples~$(y'_i, x'_i)$ corresponds to the pair~$(\Q(w), \T(w))$. The word of the second row of~$W^{\text{inv}}$ is then super plactic equivalent to the word~$\R(Q')$, and when we remove the~$y'_j$'s  from this word we obtain a word that is super plactic equivalent to~$\R(\Q(w))$. However, by construction of~$Q'$, when we remove the $l$ smallest letters from~$\R(Q')$, we recover the word~$\R(S)$. Hence, we deduce by Lemma~\ref{L:SuperplacticLargestSmallest} that~$\R(S)$ and~$\R(\Q(w))$ are also super plactic equivalent, showing by Property~\ref{P:SuperJeuDeTaquinPlactic} that~$\Rec(S) = \Q(w)$.
\end{proof}

\begin{example}
\label{Ex:SuperRSKSuperJeuDetaquin}
We show in Example~\ref{Ex:RSKExample2} the following computation:
\[
w\;=\;
\setcounter{MaxMatrixCols}{20}
\begin{pmatrix}
 1&1&2&2&3&3&4&4\\
 3&2&1&2&4&3&1&2
\end{pmatrix}
\quad
\overset{\RSK}{\longleftrightarrow}
\quad
\Bigg(\raisebox{0.5cm}{
\raisebox{-0.5cm}{$\T(w)\; =\; $}
{\ytableausetup{mathmode, boxsize=1em}
\begin{ytableau}
1&1&2\\
2&3\\
2&4\\
3
\end{ytableau}}}
\;
,
\;
\raisebox{0.5cm}{
\raisebox{-0.5cm}{$\Q(w)\; =\; $}
{\ytableausetup{mathmode, boxsize=1em}
\begin{ytableau}
1&2&3\\
1&3\\
2&4\\
4
\end{ytableau}}
}\Bigg).
\]
Consider now the super tableau~$t=
\raisebox{0.4cm}{
{\ytableausetup{mathmode, boxsize=1em}
\begin{ytableau}
1&1&1\\
2&3\\
4&5\\
4
\end{ytableau}}}$.
We insert the elements of the second row of~$w$ into~$t$, and we place the elements of the first row of~$w$ in the new added boxes starting from an empty tableau of the same shape as~$t$.
Then we obtain the following:
\[
\ytableausetup{mathmode, boxsize=1em}
\begin{ytableau}
1&1&1&3\\
2&3\\
4&5\\
4
\end{ytableau}
\;
\raisebox{-0.5cm}{$\rightarrow$}
\;
\ytableausetup{mathmode, boxsize=1em}
\begin{ytableau}
\empty&\empty&\empty&1\\
\empty&\empty\\
\empty&\empty\\
\empty\\
\end{ytableau}
\quad
\raisebox{-0.5cm}{$;$}
\quad
\ytableausetup{mathmode, boxsize=1em}
\begin{ytableau}
1&1&1&2\\
2&3&3\\
4&5\\
4
\end{ytableau}
\;
\raisebox{-0.5cm}{$\rightarrow$}
\;
\ytableausetup{mathmode, boxsize=1em}
\begin{ytableau}
\empty&\empty&\empty&1\\
\empty&\empty&1\\
\empty&\empty\\
\empty\\
\end{ytableau}
\quad
\raisebox{-0.5cm}{$;$}
\quad
\ytableausetup{mathmode, boxsize=1em}
\begin{ytableau}
1&1&1&1\\
2&3&3\\
2&5\\
4\\
4
\end{ytableau}
\;
\raisebox{-0.5cm}{$\rightarrow$}
\;
\ytableausetup{mathmode, boxsize=1em}
\begin{ytableau}
\empty&\empty&\empty&1\\
\empty&\empty&1\\
\empty&\empty\\
\empty\\
2
\end{ytableau}
\;
\raisebox{-0.5cm}{$;$}
\]
\[
\ytableausetup{mathmode, boxsize=1em}
\begin{ytableau}
1&1&1&1&2\\
2&3&3\\
2&5\\
4\\
4
\end{ytableau}
\;
\raisebox{-0.5cm}{$\rightarrow$}
\;
\ytableausetup{mathmode, boxsize=1em}
\begin{ytableau}
\empty&\empty&\empty&1&2\\
\empty&\empty&1\\
\empty&\empty\\
\empty\\
2
\end{ytableau}
\quad
\raisebox{-0.5cm}{$;$}
\quad
\ytableausetup{mathmode, boxsize=1em}
\begin{ytableau}
1&1&1&1&2&4\\
2&3&3\\
2&5\\
4\\
4
\end{ytableau}
\;
\raisebox{-0.5cm}{$\rightarrow$}
\;
\ytableausetup{mathmode, boxsize=1em}
\begin{ytableau}
\empty&\empty&\empty&1&2&3\\
\empty&\empty&1\\
\empty&\empty\\
\empty\\
2
\end{ytableau}
\quad
\raisebox{-0.5cm}{$;$}
\]
\[
\ytableausetup{mathmode, boxsize=1em}
\begin{ytableau}
1&1&1&1&2&3\\
2&3&3&4\\
2&5\\
4\\
4
\end{ytableau}
\;
\raisebox{-0.5cm}{$\rightarrow$}
\;
\ytableausetup{mathmode, boxsize=1em}
\begin{ytableau}
\empty&\empty&\empty&1&2&3\\
\empty&\empty&1&3\\
\empty&\empty\\
\empty\\
2
\end{ytableau}
\;
\raisebox{-0.5cm}{$;$}
\]
\[
\ytableausetup{mathmode, boxsize=1em}
\begin{ytableau}
1&1&1&1&1&3\\
2&3&3&4\\
2&5\\
2\\
4\\
4
\end{ytableau}
\;
\raisebox{-0.5cm}{$\rightarrow$}
\;
\ytableausetup{mathmode, boxsize=1em}
\begin{ytableau}
\empty&\empty&\empty&1&2&3\\
\empty&\empty&1&3\\
\empty&\empty\\
\empty\\
2\\
4
\end{ytableau}
\quad
\raisebox{-0.5cm}{$;$}
\quad
\ytableausetup{mathmode, boxsize=1em}
\begin{ytableau}
1&1&1&1&1&2\\
2&3&3&3\\
2&4\\
2&5\\
4\\
4
\end{ytableau}
\;
\raisebox{-0.5cm}{$\rightarrow$}
\;
\ytableausetup{mathmode, boxsize=1em}
\begin{ytableau}
\empty&\empty&\empty&1&2&3\\
\empty&\empty&1&3\\
\empty&\empty\\
\empty&4\\
2\\
4
\end{ytableau}
\raisebox{-0.4cm}{$= S,$}
\]
such  that~$\Rec(S) = \Q(w)$, as shown in Example~\ref{Ex:SuperJeuDeTaquin}.
\end{example}

\subsection{A dual super-RSK correspondence}
\label{SS:DualRSKCorrespondence}

We recall the left insertion algorithm on super tableaux which gives us a dual version of the super-RSK correspondence. We first begin by recalling the super evacuation procedure on super tableaux introduced in~\cite{Hage2021Super}.

\subsubsection{Super evacuation on super tableaux}
\label{SSS:SuperEvacuation}
Let~$\Si$ be a signed alphabet.
We denote by~$\Si^{\mathrm{op}}$ the \emph{opposite alphabet} obtained from~$\Si$ by reversing its order,  and by~$x^\ast$ the letter in~$\Si^{\mathrm{op}}$ corresponding to $x$ in~$\Si$ where~$||x^\ast||=1$ (resp.~$||x^\ast||=0$) if~$||x||=1$ (resp.~$||x||=0$). For all~$x, y$ in~$\Si$, we have~$x<y$ if and only if~$x^\ast>y^\ast$. For any word~$w=x_1\ldots x_k$ over~$\Si$, we denote by~$w^\ast=x_k^\ast\ldots x_1^\ast$ the corresponding \emph{opposite word}  of~$w$ over~$\Si^{\mathrm{op}}$.
Then, for all~$v$ and~$w$ in~$\Si^\ast$, the equality~$(vw)^\ast = w^\ast v^\ast$ holds, inducing an anti-isomorphism between the free monoids over~$\Si^{\mathrm{op}}$ and~$\Si$.
By identifying~$(\Si^{\mathrm{op}})^{\mathrm{op}}$ with~$\Si$,  we have~$(w^\ast)^\ast = w$, for any~$w$ in~$\Si^\ast$. Moreover, we show that, for all~$v$ and~$w$ in~$\Si$, the following equivalence holds,~\cite{Hage2021Super}:
\begin{equation}
\label{P:SuperEvacuationPlactic1}
v\sim_{\P(\Si)} w \quad \text{ if, and only,  if }\quad v^\ast\sim_{\P(\Si^{\mathrm{op}})} w^\ast.
\end{equation}

\begin{Algorithm}[\cite{Hage2021Super}]
Let~$t$ be in~$\YoungT(\Si)$.  An \emph{opposite tableau} in~$\YoungT(\Si^{\mathrm{op}})$  can be constructed from~$t$ using the super jeu de taquin, as follows:
~\par\nobreak
\emph{Input:} A super tableau $t$ in~$\YoungT(\Si)$.

\emph{Output:} A super tableau~$t^{\mathrm{op}}$ in~$\YoungT(\Si^{\mathrm{op}})$ with the same shape as~$t$.

\emph{Method:} Start with an empty Young diagram with the same frame as~$t$. 
Remove the box containing the top-leftmost element~$x$ in~$t$, and perform the super jeu de taquin procedure~\eqref{SS:SuperJeuDeTaquin} on the resulting super skew tableau. 
We obtain a super tableau, denoted by~$t^\ast$, whose frame has one box removed from the one of $t$. Put the letter $x^{\ast}$ in the initial empty Young diagram in the same place as the box that was removed from the frame of~$t$. Repeat the algorithm on~$t^\ast$ and continue until all the elements of~$t$ have been removed and the initial empty Young diagram has been filled with their corresponding letters in~$\Si^{\mathrm{op}}$. Output the resulting super tableau for~$t^{\mathrm{op}}$.
\end{Algorithm}

This procedure, called the \emph{super evacuation},  is the super analogue of the \emph{Schützenberger's evacuation procedure},~\cite{Schutzenberger63}. 
See~\cite[Subsection~4.2]{Hage2021Super} for explicit constructions and examples.

\begin{property}[\cite{Hage2021Super}]
For any~$t$ in~$\YoungT(\Si)$, the super tableau~$t^{\mathrm{op}}$  satisfies the following equivalence:
\begin{equation}
\label{P:SuperEvacuationPlactic2}
(\R(t))^{\ast}\sim_{\P(\Si^{\mathrm{op}})} \R(t^{\mathrm{op}}),
\end{equation}
and  the map~$t\mapsto  t^{\mathrm{op}}$ is an involution on~$\YoungT(\Si)$.
\end{property}

Let~$\Si$ and~$\Si'$ be signed alphabets and~$w= \begin{pmatrix}
 x_1&\ldots &x_k\\
 y_1&\ldots &y_k
\end{pmatrix}$
 a signed two-rowed array on~$\Si$ and~$\Si'$. We define $w^\ast:= \begin{pmatrix}
 x_k^\ast&\ldots &x_1^\ast\\
 y_k^\ast&\ldots &y_1^\ast
\end{pmatrix}$
 the signed two-rowed array on~$\Si^{\mathrm{op}}$ and~$(\Si')^{\mathrm{op}}$  whose first and second rows are the opposites of the ones of~$w$. In particular, by definition, we have~$(w^\ast)^{\text{inv}} = (w^{\text{inv}})^\ast$.

\begin{proposition}
Let~$\Si$ and~$\Si'$ be signed alphabets and~$w$
 a signed two-rowed array on~$\Si$ and~$\Si'$. If~$\RSK(w)=(\T(w),\Q(w))$ then~$\RSK(w^\ast)=(\T(w)^{\mathrm{op}},\Q(w)^{\mathrm{op}})$.
\end{proposition}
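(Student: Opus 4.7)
The plan is to split the proof into two steps: first establish the identity $\T(w^\ast)=\T(w)^{\mathrm{op}}$ directly using the cross-section property together with the two key facts \eqref{P:SuperEvacuationPlactic1} and \eqref{P:SuperEvacuationPlactic2} recalled for the super evacuation; then deduce the identity $\Q(w^\ast)=\Q(w)^{\mathrm{op}}$ by combining this with the symmetry property (Theorem~\ref{T:SymmetryRSK}) and the relation $(w^{\ast})^{\text{inv}}=(w^{\text{inv}})^{\ast}$ already noted in the text.

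For the first step, write $w=\begin{pmatrix} x_1&\ldots &x_k\\ y_1&\ldots &y_k\end{pmatrix}$. By Theorem~\ref{T:SuperRSKCorrespondence} we have $\T(w)=\T(y_1\ldots y_k)$ in $\YoungT(\Si')$, so $y_1\ldots y_k \sim_{\P(\Si')} \R(\T(w))$. Applying \eqref{P:SuperEvacuationPlactic1} yields $(y_1\ldots y_k)^{\ast}\sim_{\P((\Si')^{\mathrm{op}})} (\R(\T(w)))^{\ast}$, and then \eqref{P:SuperEvacuationPlactic2} gives $(\R(\T(w)))^{\ast}\sim_{\P((\Si')^{\mathrm{op}})}\R(\T(w)^{\mathrm{op}})$. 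Since the second row of $w^{\ast}$ is precisely $(y_1\ldots y_k)^{\ast}=y_k^{\ast}\ldots y_1^{\ast}$, Theorem~\ref{T:SuperRSKCorrespondence} and Property~\ref{P:CrossSectionProperty}, together with the identity $\T(\R(t))=t$ for any $t\in\YoungT((\Si')^{\mathrm{op}})$, force $\T(w^{\ast})=\T(w)^{\mathrm{op}}$.

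For the second step, apply the first step to the inverse signed two-rowed array $w^{\text{inv}}$, which is on $\Si'$ and $\Si$. By Theorem~\ref{T:SymmetryRSK}, $\RSK(w^{\text{inv}})=(\Q(w),\T(w))$, so the first step gives $\T((w^{\text{inv}})^{\ast})=\T(w^{\text{inv}})^{\mathrm{op}}=\Q(w)^{\mathrm{op}}$. On the other hand, $(w^{\ast})^{\text{inv}}=(w^{\text{inv}})^{\ast}$, so applying Theorem~\ref{T:SymmetryRSK} once more to $w^{\ast}$ yields $\RSK((w^{\ast})^{\text{inv}})=(\Q(w^{\ast}),\T(w^{\ast}))$, whose first component is therefore $\T((w^{\text{inv}})^{\ast})=\Q(w)^{\mathrm{op}}$. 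This gives $\Q(w^{\ast})=\Q(w)^{\mathrm{op}}$ and completes the proof.

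The only delicate point is the first step: one has to be careful that the super evacuation, a priori defined only on a single super tableau by an iterated sequence of super jeu de taquin slidings, interacts correctly with the RSK insertion of the entire second row of $w^{\ast}$. This is precisely what is packaged in \eqref{P:SuperEvacuationPlactic2} combined with the cross-section property of $\sim_{\P(\Si^{\mathrm{op}})}$, so there is no additional combinatorial argument to run; once this is set up, the second step is a purely formal consequence of the symmetry theorem.
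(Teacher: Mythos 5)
Your proposal is correct and follows essentially the same route as the paper: it first derives $\T(w^{\ast})=\T(w)^{\mathrm{op}}$ from the chain of super plactic equivalences given by \eqref{P:SuperEvacuationPlactic1}, \eqref{P:SuperEvacuationPlactic2} and the cross-section property, and then obtains $\Q(w^{\ast})=\Q(w)^{\mathrm{op}}$ by applying this identity to $w^{\text{inv}}$ together with Theorem~\ref{T:SymmetryRSK} and the relation $(w^{\ast})^{\text{inv}}=(w^{\text{inv}})^{\ast}$. No gaps; the argument matches the paper's proof step for step.
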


\begin{proof}
Consider~$w= \begin{pmatrix}
 x_1&\ldots &x_k\\
 y_1&\ldots &y_k
\end{pmatrix}$
a signed two-rowed array on~$\Si$ and~$\Si'$.
Following Subsection~\ref{SS:SuperplacticMonoid}, the following equivalence~$\R(\T(w))\sim_{\P(\Si)} y_1\ldots y_k$ holds. Then we obtain
\[
\R(\T(w^{\ast})) \sim_{\P(\Si^{\mathrm{op}})}y_{k}^{\ast}\ldots y_{1}^{\ast} = (y_1\ldots y_k)^\ast \overset{\eqref{P:SuperEvacuationPlactic1}}{\sim_{\P(\Si^{\mathrm{op}})}} (\R(\T(w)))^\ast \overset{\eqref{P:SuperEvacuationPlactic2}}{\sim_{\P(\Si^{\mathrm{op}})}} \R(\T(w)^{\mathrm{op}}).
\]
 We deduce by Property~\ref{P:CrossSectionProperty} that the equality~$\T(w^{\ast}) = \T(w)^{\mathrm{op}}$
holds in~$\YoungT((\Si')^{\mathrm{op}})$. Hence, we obtain~$\RSK(w^\ast)=(\T(w)^{\mathrm{op}}, \Q(w^\ast))$. Similarly, we show that 
 \[
\RSK((w^\ast)^{\text{inv}}) = \RSK((w^{\text{inv}})^\ast)=(\Q(w)^{\mathrm{op}}, \T(w^\ast)).
\]
 We deduce by Theorem~\ref{T:SymmetryRSK} that~$(\Q(w)^{\mathrm{op}}, \T(w^\ast)) = (\Q(w^\ast), \T(w)^{\mathrm{op}})$, showing the claim.
\end{proof}

\subsubsection{A dual construction of the super RSK-correspondence}
\label{SSS:DualConstructionRSK}
We  present a dual way to construct the super RSK-correspondence using the left insertion algorithm on super tableaux.

\begin{Algorithm}[\cite{LaScalaNardozzaSenato06}]
\label{A:LeftInsertion} 
The \emph{left (or column) insertion}, denoted by~$\insl{}$,  inserts an element~$x$ in~$\Si$ into a super tableau~$t$ of~$\YoungT(\Si)$ as follows:
~\par\nobreak
\emph{Input:} A super tableau $t$ and a letter $x \in \Si$.

\emph{Output:} A super tableau $x \insl{} t$.

\emph{Method:} If $t$ is empty, create a box and label it $x$. Suppose~$t$ is 
non-empty. 
If~$x\in\Si_0$ (resp.~$x\in~\Si_1$) is larger than (resp. at least as large as) the bottom element of the
leftmost column of~$t$, then put~$x$ in a box to the bottom of this column; Otherwise, let~$y$ be the smallest element of the leftmost column of~$t$ such that~$y\geq x$ (resp.~$y>x$). Then replace~$y$ by~$x$ in this column  and recursively insert~$y$ into the super tableau formed by the columns of~$t$ to the right of the leftmost. Note that this recursion may end with an insertion into an empty column to the right of the existing columns of~$t$.
Output the resulting super tableau.
\end{Algorithm}

\begin{property}
\label{P:CommutationProperty}
As a consequence of Property~\ref{P:CrossSectionProperty}, the following \emph{commutation property} holds in~$\YoungT(\Si)$, for all~$t$ in~$\YoungT(\Si)$ and $x,y$ in~$\Si$:
\[
y \insl{} (t \insr{} x)
\: = \:
(y \insl{} t) \insr{} x.
\]
In particular, for any word~$w=x_1\ldots x_k$ in~$\Si^\ast$, the super tableau~$\T(w)$ is also computed by inserting its elements iteratively from right to left using the left insertion~$\insl{}$ as follows:
\[
\T(w)
\; =\;
(w\insl{} \emptyset)
\; :=\;
(x_1 \insl{} ( \ldots\insl{}  (x_k\insl{} \emptyset)\ldots)).
\]
\end{property}

\begin{Algorithm}
\label{A:RSkalgoDual}
Let~$\Si$ and~$\Si'$ be  signed alphabets. Starting from a signed two-rowed array~$w$ on~$\Si$ and~$\Si'$, we can compute the pair of super tableaux~$\RSK(w)=(\T(w),\Q(w))$ using the left insertion, as follows:
~\par\nobreak
\emph{Input:} A signed two-rowed array~$w= \begin{pmatrix}
 x_1&\ldots &x_k\\
 y_1&\ldots &y_k
\end{pmatrix}$
on $\Si$ and~$\Si'$.

\emph{Output:} A pair~$(\T'(w), \Q'(w))\in \YoungT(\Si')\times \YoungT(\Si)$ of same-shape tableaux containing $k$ boxes.

\emph{Method:} Start with an empty super tableau~$T'_{k+1}$ and an empty  super tableau~$Q'_{k+1}$. For each $i = k,\ldots, 1$, compute~$ y_i\insl{} T'_{i +1}$ as per Algorithm~\ref{A:LeftInsertion} and let $T'_i$ be the resulting super tableau.
Let~$Q'_i$ be super tableau obtained from~$Q'_{i+1}$, by performing the reverse sliding algorithm~\eqref{SSS:ReverseSlidings}, using the box that belongs to~$T'_i$ but not to~$T'_{i+1}$, and then place~$x_i$ in the top-leftmost corner of the result.
Output $T'_1$ for $\T'(w)$ and $Q'_1$ as $\Q'(w)$.
\end{Algorithm}

\begin{proposition}
\label{P:RSkalgoDual}
The output of Algorithm~\ref{A:RSkalgoDual} is equal to $\RSK(w)=(\T(w),\Q(w))$.
\end{proposition}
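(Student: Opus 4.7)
The plan is to argue by induction on the length $k$ of the signed two-rowed array $w$. The base case $k = 0$ is trivial, since both algorithms output the pair $(\emptyset, \emptyset)$. For the inductive step, let $w'$ denote the array obtained from $w$ by deleting its first column; by construction $T'_i = y_i \insl{} T'_{i+1}$ for each $i$, so iterating Property~\ref{P:CommutationProperty} yields
\[
T'_1 \;=\; y_1 \insl{} \bigl( y_2 \insl{} \bigl( \ldots \insl{} ( y_k \insl{} \emptyset ) \bigr) \bigr) \;=\; \T(y_1 \ldots y_k) \;=\; \T(w).
\]
The only remaining task is to show $Q'_1 = \Q(w)$.

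For the recording tableau, I would establish the following key lemma: letting $\widetilde{Q}$ denote the super skew tableau obtained from $\Q(w)$ by deleting the entry $x_1$ at the box $(1,1)$ (which becomes the unique inner corner), the super jeu de taquin applied to $\widetilde{Q}$ starting from this inner corner produces $\Q(w')$, and the outer corner at which the sliding terminates is precisely the box distinguishing the shape of $\T(w)$ from that of $\T(w')$. Granted this lemma, the inductive step concludes by reversing the sliding: by the inductive hypothesis applied to $w'$, the partial computation at step $i = 1$ begins with $T'_2 = \T(w')$ and $Q'_2 = \Q(w')$, and performing reverse sliding on $Q'_2$ with the new box of $T'_1$ treated as an empty outer corner recovers $\widetilde{Q}$, after which placing $x_1$ at position $(1,1)$ yields $\Q(w) = Q'_1$.

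The main obstacle is the rectification claim in the key lemma. My strategy is to invoke Proposition~\ref{P:SuperRSKSuperJeuDeTaquin} for the array $w'$ with the single-box super tableau $t := \T(y_1)$. That proposition asserts that right-inserting $y_2, \ldots, y_k$ into $t$, while recording the new box positions with $x_2, \ldots, x_k$ in an initially empty Young diagram of the same shape as $t$, produces a super skew tableau whose rectification equals $\Q(w')$. The key observation is that this recorded skew tableau is exactly $\widetilde{Q}$: inserting $y_2, \ldots, y_k$ into $\T(y_1)$ reproduces the same sequence of insertions used to compute $\T(w) = \T(y_1 y_2 \ldots y_k)$, and the boxes filled with $x_2, \ldots, x_k$ appear in the same positions as in Algorithm~\ref{A:RSkalgo} applied to $w$. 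This identification gives $\Rec(\widetilde{Q}) = \Q(w')$; the terminal outer corner statement then reduces to a cell count, since the skew shape of $\widetilde{Q}$ and the shape of $\Rec(\widetilde{Q}) = \Q(w')$ must together reconstruct the shape of $\T(w)$, so the terminal outer corner is forced to coincide with the unique cell in the symmetric difference, namely the new box produced by the left insertion $y_1 \insl{} \T(w')$.
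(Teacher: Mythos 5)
Your proposal is correct and follows essentially the same route as the paper: induction on the number of columns of $w$, the commutation property to identify $T'_1$ with $\T(w)$, and an application of Proposition~\ref{P:SuperRSKSuperJeuDeTaquin} to the truncated array $w'$ with the single-box tableau $\T(y_1)$ to show that $\Q(w)$ with its top-left entry removed rectifies to $\Q(w')$, so that the reverse sliding in Algorithm~\ref{A:RSkalgoDual} undoes exactly this slide. Your treatment of where the forward slide terminates (via the shape comparison between $\T(w)$ and $\T(w')$) makes explicit a point the paper's proof leaves implicit, but the argument is the same.
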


\begin{proof}
Following Property~\ref{P:CommutationProperty}, the super tableaux~$\T(w)$ and~$\T'(w)$ are equal. We still have to show that~$\Q(w) = \Q'(w)$. We will proceed by induction on the number of couples in~$w$. The result is obvious when~$w$ contains zero or one couple. 
Let~$w'$ be the signed two-rowed array obtained from~$w$ by eliminating its first couple~$(x_1,y_1)$.
By the induction hypothesis, we have~$\Q(w') = \Q'(w')$.
Moreover, on the one hand, and since the super tableau~$\Q'(w)$ is computed using the reverse sliding algorithm, we have that~$\Q'(w') = (\Q'(w))^\ast$, with the property that~$\Q'(w)$  has~$x_1$ in its top-leftmost corner.  On the second hand, by applying Proposition~\ref{P:SuperRSKSuperJeuDeTaquin} to the signed two-rowed array~$w'$ and  the super tableau containing one box filled with~$y_1$, we obtain that~$\Q(w') = (\Q(w))^\ast$, and then~$\Q'(w') = (\Q(w))^\ast$, with the property  that~$\Q(w)$  has~$x_1$ in its top-leftmost corner, showing that~$\Q(w) = \Q'(w)$.
\end{proof}

\begin{example}
Consider~$w=\setcounter{MaxMatrixCols}{20}
\begin{pmatrix}
  1&2&2&2&3&3&3&4&4&4&5&5&5&6&6&6\\
 2&4&2&1&4&5&6&3&5&6&4&3&1&5&4&1
\end{pmatrix}$ of Example~\ref{Ex:RSKExample1}.
The sequence of pairs produced during the computation of~$(\T(w),\Q(w))$ as per Algorithm~\ref{A:RSkalgoDual}  is the following:
\[
\big(\emptyset
\;
,
\;
\emptyset\big),
\quad
\big(
{\ytableausetup{mathmode, boxsize=1em}
\begin{ytableau}
1
\end{ytableau}}
\;
,
\;
{\ytableausetup{mathmode, boxsize=1em}
\begin{ytableau}
6
\end{ytableau}}
\big),
\quad
\big(
{\ytableausetup{mathmode, boxsize=1em}
\begin{ytableau}
1\\
4
\end{ytableau}}
\;
,
\;
{\ytableausetup{mathmode, boxsize=1em}
\begin{ytableau}
6\\
6
\end{ytableau}}
\big),
\quad
\Big(\raisebox{0.1cm}{
{\ytableausetup{mathmode, boxsize=1em}
\begin{ytableau}
1\\
4\\
5
\end{ytableau}}}
\;
,
\;
\raisebox{0.1cm}{
{\ytableausetup{mathmode, boxsize=1em}
\begin{ytableau}
6\\
6\\
6
\end{ytableau}
}}\Big),
\quad
\bigg(\raisebox{0.25cm}{
{\ytableausetup{mathmode, boxsize=1em}
\begin{ytableau}
1&1\\
4\\
5
\end{ytableau}}}
\;
,
\;
\raisebox{0.25cm}{
{\ytableausetup{mathmode, boxsize=1em}
\begin{ytableau}
5&6\\
6\\
6
\end{ytableau}}
}\bigg),
\quad
\bigg(\raisebox{0.25cm}{
{\ytableausetup{mathmode, boxsize=1em}
\begin{ytableau}
1&1\\
3&4\\
5
\end{ytableau}}}
\;
,
\;
\raisebox{0.25cm}{
{\ytableausetup{mathmode, boxsize=1em}
\begin{ytableau}
5&6\\
5&6\\
6
\end{ytableau}}
}\bigg),
\]
\[
\bigg(\raisebox{0.25cm}{
{\ytableausetup{mathmode, boxsize=1em}
\begin{ytableau}
1&1\\
3&4\\
4&5
\end{ytableau}}}
\;
,
\;
\raisebox{0.25cm}{
{\ytableausetup{mathmode, boxsize=1em}
\begin{ytableau}
5&6\\
5&6\\
5&6
\end{ytableau}}
}\bigg),
\quad
\bigg(\raisebox{0.25cm}{
{\ytableausetup{mathmode, boxsize=1em}
\begin{ytableau}
1&1\\
3&4\\
4&5\\
6
\end{ytableau}}}
\;
,
\;
\raisebox{0.25cm}{
{\ytableausetup{mathmode, boxsize=1em}
\begin{ytableau}
4&6\\
5&6\\
5&6\\
5
\end{ytableau}}
}\bigg),
\quad
\Bigg(\raisebox{0.25cm}{
{\ytableausetup{mathmode, boxsize=1em}
\begin{ytableau}
1&1\\
3&4\\
4&5\\
5&6
\end{ytableau}}}
\;
,
\;
\raisebox{0.25cm}{
{\ytableausetup{mathmode, boxsize=1em}
\begin{ytableau}
4&4\\
5&6\\
5&6\\
5&6
\end{ytableau}}
}\Bigg),
\quad
\Bigg(\raisebox{0.25cm}{
{\ytableausetup{mathmode, boxsize=1em}
\begin{ytableau}
1&1&4\\
3&3\\
4&5\\
5&6
\end{ytableau}}}
\;
,
\;
\raisebox{0.25cm}{
{\ytableausetup{mathmode, boxsize=1em}
\begin{ytableau}
4&4&4\\
5&6\\
5&6\\
5&6
\end{ytableau}}
}\Bigg),
\]
\[
\Bigg(\raisebox{0.25cm}{
{\ytableausetup{mathmode, boxsize=1em}
\begin{ytableau}
1&1&4\\
3&3\\
4&5\\
5&6\\
6
\end{ytableau}}}
\;
,
\;
\raisebox{0.25cm}{
{\ytableausetup{mathmode, boxsize=1em}
\begin{ytableau}
3&4&4\\
4&6\\
5&6\\
5&6\\
5
\end{ytableau}}
}\Bigg),
\quad
\Bigg(\raisebox{0.25cm}{
{\ytableausetup{mathmode, boxsize=1em}
\begin{ytableau}
1&1&4\\
3&3\\
4&5\\
5&6\\
5&6
\end{ytableau}}}
\;
,
\;
\raisebox{0.25cm}{
{\ytableausetup{mathmode, boxsize=1em}
\begin{ytableau}
3&3&4\\
4&4\\
5&6\\
5&6\\
5&6
\end{ytableau}}
}\Bigg),
\quad
\Bigg(\raisebox{0.3cm}{
{\ytableausetup{mathmode, boxsize=1em}
\begin{ytableau}
1&1&4\\
3&3&5\\
4&4\\
5&6\\
5&6
\end{ytableau}}}
\;
,
\;
\raisebox{0.3cm}{
{\ytableausetup{mathmode, boxsize=1em}
\begin{ytableau}
3&3&3\\
4&4&4\\
5&6\\
5&6\\
5&6
\end{ytableau}}
}\Bigg),
\]
\[
\Bigg(\raisebox{0.3cm}{
{\ytableausetup{mathmode, boxsize=1em}
\begin{ytableau}
1&1&1&4\\
3&3&5\\
4&4\\
5&6\\
5&6
\end{ytableau}}}
\;
,
\;
\raisebox{0.3cm}{
{\ytableausetup{mathmode, boxsize=1em}
\begin{ytableau}
2&3&3&3\\
4&4&4\\
5&6\\
5&6\\
5&6
\end{ytableau}}
}\Bigg),
\quad
\Bigg(\raisebox{0.5cm}{
{\ytableausetup{mathmode, boxsize=1em}
\begin{ytableau}
1&1&1&4\\
2&3&3&5\\
4&4\\
5&6\\
5&6
\end{ytableau}}}
\;
,
\;
\raisebox{0.5cm}{
{\ytableausetup{mathmode, boxsize=1em}
\begin{ytableau}
2&3&3&3\\
2&4&4&4\\
5&6\\
5&6\\
5&6
\end{ytableau}}
}\Bigg),
\quad
\Bigg(\raisebox{0.5cm}{
{\ytableausetup{mathmode, boxsize=1em}
\begin{ytableau}
1&1&1&4\\
2&3&3&5\\
4&4&4\\
5&6\\
5&6
\end{ytableau}}}
\;
,
\;
\raisebox{0.5cm}{
{\ytableausetup{mathmode, boxsize=1em}
\begin{ytableau}
2&3&3&3\\
2&4&4&4\\
2&5&6\\
5&6\\
5&6
\end{ytableau}}
}\Bigg),
\]
\[
\Bigg(\raisebox{0.5cm}{
\raisebox{-0.5cm}{$\T(w)\; =\; $}
{\ytableausetup{mathmode, boxsize=1em}
\begin{ytableau}
1&1&1&4&4\\
2&3&3&5\\
2&4&4\\
5&6\\
5&6
\end{ytableau}}}
\;
,
\;
\raisebox{0.5cm}{
\raisebox{-0.5cm}{$\Q(w)\; =\; $}
{\ytableausetup{mathmode, boxsize=1em}
\begin{ytableau}
1&2&3&3&3\\
2&4&4&4\\
2&5&6\\
5&6\\
5&6
\end{ytableau}}
}\Bigg).
\]

Consider now~$w=\setcounter{MaxMatrixCols}{20}
\begin{pmatrix}
 1&1&2&2&3&3&4&4\\
 3&2&1&2&4&3&1&2
\end{pmatrix}$ of Example~\ref{Ex:RSKExample2}.
The sequence of pairs produced during the computation of~$\T(w)$ and~$\Q(w)$ as per Algorithm~\ref{A:RSkalgoDual} starting from~$w$ is the following:
\[
\big(\emptyset
\;
,
\;
\emptyset\big),
\;
\big(
{\ytableausetup{mathmode, boxsize=1em}
\begin{ytableau}
2
\end{ytableau}}
\;
,
\;
{\ytableausetup{mathmode, boxsize=1em}
\begin{ytableau}
4
\end{ytableau}}
\big),
\;
\big(
{\ytableausetup{mathmode, boxsize=1em}
\begin{ytableau}
1&2
\end{ytableau}}
\;
,
\;
{\ytableausetup{mathmode, boxsize=1em}
\begin{ytableau}
4&4
\end{ytableau}}
\big),
\;
\Big(\raisebox{0.1cm}{
{\ytableausetup{mathmode, boxsize=1em}
\begin{ytableau}
1&2\\
3
\end{ytableau}}}
\;
,
\;
\raisebox{0.1cm}{
{\ytableausetup{mathmode, boxsize=1em}
\begin{ytableau}
3&4\\
4
\end{ytableau}
}}\Big),
\;
\bigg(\raisebox{0.25cm}{
{\ytableausetup{mathmode, boxsize=1em}
\begin{ytableau}
1&2\\
3\\
4
\end{ytableau}}}
\;
,
\;
\raisebox{0.25cm}{
{\ytableausetup{mathmode, boxsize=1em}
\begin{ytableau}
3&4\\
3\\
4
\end{ytableau}}
}\bigg),
\;
\bigg(\raisebox{0.25cm}{
{\ytableausetup{mathmode, boxsize=1em}
\begin{ytableau}
1&2\\
2&3\\
4
\end{ytableau}}}
\;
,
\;
\raisebox{0.25cm}{
{\ytableausetup{mathmode, boxsize=1em}
\begin{ytableau}
2&3\\
3&4\\
4
\end{ytableau}}
}\bigg),
\]
\[
\bigg(\raisebox{0.25cm}{
{\ytableausetup{mathmode, boxsize=1em}
\begin{ytableau}
1&1&2\\
2&3\\
4
\end{ytableau}}}
\;
,
\;
\raisebox{0.25cm}{
{\ytableausetup{mathmode, boxsize=1em}
\begin{ytableau}
2&2&3\\
3&4\\
4
\end{ytableau}}
}\bigg),
\quad
\bigg(\raisebox{0.25cm}{
{\ytableausetup{mathmode, boxsize=1em}
\begin{ytableau}
1&1&2\\
2&3\\
2&4
\end{ytableau}}}
\;
,
\;
\raisebox{0.25cm}{
{\ytableausetup{mathmode, boxsize=1em}
\begin{ytableau}
1&2&3\\
2&3\\
4&4
\end{ytableau}}
}\bigg),
\quad
\bigg(
\raisebox{0.25cm}{
\raisebox{-0.2cm}{$\T(w)\; =\; $}
{\ytableausetup{mathmode, boxsize=1em}
\begin{ytableau}
1&1&2\\
2&3\\
2&4\\
3
\end{ytableau}}}
\;
,
\;
\raisebox{0.25cm}{
\raisebox{-0.2cm}{$\Q(w)\; =\; $}
{\ytableausetup{mathmode, boxsize=1em}
\begin{ytableau}
1&2&3\\
1&3\\
2&4\\
4
\end{ytableau}}
}\bigg).
\]
\end{example}

\section{The super Littlewood--Richardson rule}
\label{S:SuperLittelwoodRichardsonRule}

In this section, we apply the super-RSK correspondence in order to give a combinatorial version of the super Littlewood--Richardson rule on super Schur functions over a finite signed alphabet.

In the sequel, we will assume that~$\Si$ is finite.

\subsection{Super Littlewood--Richardson coefficients}
\label{SS:SuperLittlewoodRichardsonCoefficients}
Let~$\lambda$,~$\mu$ and~$\nu$ be in~$\Pr$ such that~$\nu/\lambda$ is a skew shape.
We want to compute the number of ways a given super tableau~$t$ in~$\YoungT(\Si,\nu)$ can be written as the product of a super tableau~$t'$ in~$\YoungT(\Si,\lambda)$ and a super tableau~$t''$ in~$\YoungT(\Si,\mu)$.

For any super tableau~$t$ in~$\YoungT(\Si,\nu)$, we set
\[
\YoungT(\Si,\lambda, \mu,  \dashv t)\; :=\; \big\{ (t',t'')\in \YoungT(\Si,\lambda)\times \YoungT(\Si,\mu)\;\big|\; t= t'\star_{\YoungT(\Si)}t'' \big\},
\]
and we call the integer
\[
c_{\lambda,\mu}^{\nu} \; := \; \# \YoungT(\Si,\lambda, \mu,  \dashv t)
\]
the \emph{super Littlewood--Richardson coefficient}.

Using the super jeu de taquin and as discussed in~\ref{SSS:InsertionsSuperJeuDeTaquin}, the integer~$c_{\lambda,\mu}^{\nu}$ is equal to the number of super skew tableaux of the following form:
\[
\raisebox{1.4cm}{$[t',t'']\;=\;$}
\scalebox{0.6}{
\begin{tikzpicture}
\draw  (0,0) --++(0,2) -- ++(2,0)-- ++(0,-0.75)-- ++(-1,0)-- ++(0,-1)-- ++(-1,0)-- ++(-2,0)-- ++(0,-2)-- ++(1,0)-- ++(0,1)-- ++(1,0)-- ++(0,0.75);
\draw (0.2,1.2) node[right]{\scalebox{1.5} {$t''$}} ;
\draw (-1.75,-0.75) node[right]{\scalebox{1.5} {$t'$}};
\end{tikzpicture}}
\]
whose rectification is~$t$.
We will  prove  that the integer~$c_{\lambda,\mu}^{\nu}$ is also  equal to the number of super skew tableaux in~$\SkewT(\Si,\nu/\lambda)$  whose rectification is a given super tableau of shape~$\mu$.

For any~$T$ in~$\YoungT(\Si,\mu)$, we set 
\[
\SkewT(\Si,\nu/\lambda, \dashv T)\; :=\; \big\{S\in \SkewT(\Si,\nu/\lambda)\;\big|\; \Rec(S)=T\big\}.
\]
We prove in Theorem~\ref{T:SuperLittlewoodRichardsonRule} that there is a canonical one-to-one correspondence between the sets~$\YoungT(\Si,\lambda, \mu,  \dashv t)$ and~$\SkewT(\Si,\nu/\lambda, \dashv T)$. We deduce that the coefficient~$c_{\lambda,\mu}^{\nu}$  does not depend on~$t$ and~$T$, and depends only on~$\lambda$,~$\mu$  and~$\nu$. Note that the later property is proved in~\cite{Hage2021Super} using an interpretation of the super jeu de taquin in terms of Fomin's growth diagrams.

\subsection{Super Littlewood--Richardson rule}
\label{SS:SuperLittlewoodRichardsonRule}

We denote by~$R_{\Si}$ the~$\mathbb{Z}$-algebra constructed from the super plactic monoid~$\P(\Si)$ whose linear generators are the monomials in~$\P(\Si)$.
This algebra is an associative and unitary ring that is not commutative. A generic element in~$R_{\Si}$ is realized by a formal sum of super  plactic classes with coefficients from~$\mathbb{Z}$. 
Following Property~\ref{P:CrossSectionProperty}, a typical element in~$R_{\Si}$ is a formal sum of super tableaux.
A canonical homomorphism from $R_{\Si}$ onto the ring of polynomials~$\mathbb{Z}[X]$ is obtained by taking each super tableau $t$ to its monomial~$x^t$, where~$x^t$ is the product of the variables~$x_i$, each occurring as many times in $x^t$ as $i$ occurs in $t$. For instance, the following monomial
\[
x^{\T(w)} = x_{1}^{3}x_{2}^{2}x_{3}^{2}
x_{4}^{4}x_{5}^{3}x_{6}^{2}\quad (\text{resp}.~x^{\Q(w)} = x_{1}^{1}x_{2}^{3}x_{3}^{3}
x_{4}^{3}x_{5}^{3}x_{6}^{3}) \]
corresponds to~$\T(w)$ (resp.~$\Q(w)$) computed in Example~\ref{Ex:RSKExample1}. Moreover, the following monomial
\[
x^{\T(w)} = x_{1}^{2}x_{2}^{2}x_{3}^{2}
x_{4}\quad (\text{resp}.~x^{\Q(w)} = x_{1}^{2}x_{2}^{2}x_{3}^{2}
x_{4}^{2}) \]
corresponds to the super tableau~$\T(w)$ (resp.~$\Q(w)$) computed in Example~\ref{Ex:RSKExample2}.

We define~$S_\lambda$ (resp.~$S_{\lambda/\mu}$) in~$R_{\Si}$ to be the sum of all super tableaux (resp. super skew tableaux) of shape~$\lambda$ (resp.~$\lambda/\mu$) and entries in~$\Si$, with~$\lambda\in\Pr$ (resp.~$\lambda/\mu$ is a skew shape). 
By taking the image of~$S_{\lambda}$ (resp.~$S_{\lambda}/\mu$) in~$\mathbb{Z}[X]$, we obtain the so-called \emph{super Schur function} (resp. \emph{super skew Schur function})~$s_{\lambda}(X)$ (resp.~$s_{\lambda/\mu}(X)$).

\begin{theorem}[The super Littlewood--Richardson rule]
\label{T:SuperLittlewoodRichardsonRule}
Let~$\lambda$,~$\mu$ and~$\nu$ be partitions in~$\Pr$ such that~$\nu/\lambda$ is a skew shape. 
The following identities
\begin{equation}
\label{Eq:SuperLittlewoodRichardsonRule}
S_{\lambda} S_{\mu}\;=\; \underset{\nu}{\sum} c_{\lambda,\mu}^{\nu} S_{\nu}
\qquad
\text{ and }
\qquad
S_{\nu/\lambda}\;=\; \underset{\mu}{\sum} c_{\lambda,\mu}^{\nu} S_{\mu}
\end{equation}
hold in~$R_{\Si}$. 
\end{theorem}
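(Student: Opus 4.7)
The plan is to reduce both identities to a single combinatorial claim: for any partitions $\lambda,\mu,\nu$ in $\Pr$ with $\Yr(\lambda)\subseteq\Yr(\nu)$, any $t\in\YoungT(\Si,\nu)$ and any $T\in\YoungT(\Si,\mu)$, I will construct a canonical bijection
\[
\Phi_{t,T}\colon \YoungT(\Si,\lambda,\mu,\dashv t) \;\longrightarrow\; \SkewT(\Si,\nu/\lambda,\dashv T).
\]
The existence of $\Phi_{t,T}$ for arbitrary $t$ and $T$ will immediately imply that $\#\YoungT(\Si,\lambda,\mu,\dashv t)$ is independent of $t$ and that $\#\SkewT(\Si,\nu/\lambda,\dashv T)$ is independent of $T$, so that $c_{\lambda,\mu}^{\nu}$ becomes a genuine invariant of the triple $(\lambda,\mu,\nu)$ admitting both counting interpretations.

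For the forward direction of $\Phi_{t,T}$, I will combine the super-RSK correspondence with Proposition~\ref{P:SuperRSKSuperJeuDeTaquin}. Given $(t',t'')\in\YoungT(\Si,\lambda,\mu,\dashv t)$, Theorem~\ref{T:SuperRSKCorrespondence} produces the unique signed two-rowed array $w$ on $\Si$ and $\Si$ satisfying $\RSK(w)=(t'',T)$. The second row of $w$ is super plactic equivalent to $\R(t'')$, so Property~\ref{P:CrossSectionProperty} gives $t' \insr{} (\text{second row of }w) = t' \insr{} \R(t'') = t$. Proposition~\ref{P:SuperRSKSuperJeuDeTaquin} then guarantees that recording the newly added boxes with the first row of $w$ produces a super skew tableau $S$ of shape $\nu/\lambda$ whose rectification equals $\Q(w)=T$; this $S$ is $\Phi_{t,T}(t',t'')$. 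To invert $\Phi_{t,T}$, I will run the reverse right insertion of Algorithm~\ref{A:ReverseRightInsertion} starting from $t$, treating $S$ itself as the recording tableau: its boxes are traversed in decreasing order of their entries, with the sign-sensitive tie-breaking of Algorithm~\ref{A:ReverseRSKCorrespodence}. This process halts on a super tableau $t'$ of shape $\lambda$, and the sequence of bumped letters, inserted into the empty tableau, builds the second component $t''$. That $t''$ has shape exactly $\mu$ will follow because the reconstructed signed two-rowed array $w$ satisfies $\Q(w)=T$, forcing $\T(w)=t''$ to share the shape $\mu$ of~$T$.

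Once $\Phi_{t,T}$ is in place, both identities of~\eqref{Eq:SuperLittlewoodRichardsonRule} will follow by elementary bookkeeping in $R_{\Si}$. For the first, I expand
\[
S_{\lambda}\, S_{\mu} \;=\; \sum_{t'\in\YoungT(\Si,\lambda)}\;\sum_{t''\in\YoungT(\Si,\mu)} t' \star_{\YoungT(\Si)} t'',
\]
group summands by the shape $\nu$ of $t = t' \star_{\YoungT(\Si)} t''$ and by $t$ itself, and collect $c_{\lambda,\mu}^{\nu}$ in front of $S_{\nu}$ via the independence from $t$. For the second, I identify each $S\in\SkewT(\Si,\nu/\lambda)$ with the super plactic class of $\Rec(S)$ in $R_{\Si}$, legitimate by Property~\ref{P:SuperJeuDeTaquinPlactic}, then group by the shape $\mu$ of $\Rec(S)$ and by $T=\Rec(S)$. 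The main obstacle will lie in the inverse direction of $\Phi_{t,T}$: showing that the reverse right insertion, run with $S$ in the role of a recording tableau, is globally well defined, that it returns a pair actually lying in $\YoungT(\Si,\lambda)\times\YoungT(\Si,\mu)$, and that it truly inverts the forward construction. The super signature demands delicate handling of the bumping order whenever repeated letters of $\Si_0$ or $\Si_1$ occur, exactly as in the correctness arguments of Propositions~\ref{P:RSKalgoCorrectness} and~\ref{P:ReverseRSKalgoCorrectness}.
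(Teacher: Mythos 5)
Your overall strategy --- a bijection $\YoungT(\Si,\lambda,\mu,\dashv t)\to\SkewT(\Si,\nu/\lambda,\dashv T)$ for arbitrary $t$ and $T$, followed by the bookkeeping in $R_{\Si}$ --- is exactly the paper's, and your forward construction (take the signed two-rowed array $w$ with $\RSK(w)=(t'',T)$, insert its second row into $t'$, record its first row in the new boxes, and invoke Proposition~\ref{P:SuperRSKSuperJeuDeTaquin} to conclude $\Rec(S)=T$) coincides with the paper's. Where you diverge is the inverse map, and that is also where your proposal has its genuine gap. You propose to run Algorithm~\ref{A:ReverseRightInsertion} on $t$ using the skew tableau $S$ itself as recording tableau, traversing its boxes in decreasing order with the tie-breaking of Algorithm~\ref{A:ReverseRSKCorrespodence}; but the correctness of that algorithm (Proposition~\ref{P:ReverseRSKalgoCorrectness}) is established only for a straight-shape recording tableau produced by Algorithm~\ref{A:RSkalgo}. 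For a skew $S$ you would first have to prove that the selected box is an outer corner of the current shape at every step, that the bumped letters emerge in an order compatible with Conditions~\eqref{E:Order1} and~\eqref{E:Order2}, and that the reconstructed array $w$ satisfies $\Q(w)=\Rec(S)$ rather than merely having $S$ as its ``recording shape'' --- you name all of this as ``the main obstacle'' but supply no argument, and your final claim that $t''$ has shape $\mu$ rests on precisely this unproved identification.

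The paper sidesteps the issue entirely: given $S\in\SkewT(\Si,\nu/\lambda,\dashv T)$ it fills the inner shape $\lambda$ with a super tableau $T'$ over an auxiliary signed alphabet $\Si^{<}$ whose letters are all smaller than those of $\Si$, so that $T'$ and $S$ together form a genuine straight-shape super tableau $T''$ over $\Si^{<}\cup\Si$. The already-proved Theorem~\ref{T:SuperRSKCorrespondence} applied to the pair $(t,T'')$ then yields a unique signed two-rowed array; its initial block corresponds to $(t',T')$ and, by Proposition~\ref{P:SuperRSKSuperJeuDeTaquin}, its tail corresponds to $(t'',T)$ with $t'\star_{\YoungT(\Si)}t''=t$. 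If you wish to keep your direct reverse-bumping inverse you must prove the well-definedness lemma yourself; otherwise, adopting the $\Si^{<}$-embedding reduces the inverse direction to results you already have and closes the gap.
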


\begin{proof}
Prove that for any~$t$ in~$\YoungT(\Si,\nu)$ and any~$T$ in~$\YoungT(\Si,\mu)$, there is a canonical one-to-one correspondence between the sets~$\YoungT(\Si,\lambda, \mu,  \dashv t)$ and~$\SkewT(\Si,\nu/\lambda, \dashv T)$.
Start with~$(t',t'')$ in~$\YoungT(\Si,\lambda, \mu,  \dashv t)$ and let
\[
\begin{pmatrix}
 x_1&\ldots &x_k\\
 y_1&\ldots &y_k
\end{pmatrix}
\]
be the signed two-rowed array on~$\Si$ corresponding to the couple~$(t'', T)$.
By inserting the letters~$y_i$, for~$i=1,\ldots, k$,  into the super tableau~$t'$, we form a super skew tableau~$S$ by successively placing the letter~$x_i$, for~$i=1,\ldots, k$, into the new boxes starting with an empty Young diagram of the same shape as~$t'$.
Since the super tableau~$t=t'\star_{\YoungT(\Si)}t''$ has shape~$\nu$, then the super skew tableau~$S$ has shape~$\nu/\lambda$ and we deduce by Proposition~\ref{P:SuperRSKSuperJeuDeTaquin} that~$S$ belongs to~$\SkewT(\Si,\nu/\lambda, \dashv T)$.

Conversely, start with a super skew tableau~$S$ in~$\SkewT(\Si,\nu/\lambda, \dashv T)$, and
let~$T'$ be in~$\YoungT(\Si^{<},\lambda)$ where~$\Si^{<}$ is a signed alphabet whose elements are all smaller than the ones of~$\Si$.
Let~$T''$ be the super tableau in~$\YoungT(\Si^{<}\cup\Si,\nu)$ obtained by placing~$T'$ on the empty part of~$S$ corresponding to~$\nu$.  By Theorem~\ref{T:SuperRSKCorrespondence}, the couple~$(t, T'')$ corresponds to a unique signed two-rowed array of the following form~$\begin{pmatrix}
x'_1&\ldots &x'_l& x_1&\ldots &x_k\\
y'_1&\ldots &y'_l& y_1&\ldots &y_k
\end{pmatrix}$.
Then the signed two rowed array~$
\begin{pmatrix}
x'_1&\ldots &x'_l\\
y'_1&\ldots &y'_l
\end{pmatrix}$ corresponds to the couple~$(t',T')$, and following Proposition~\ref{P:SuperRSKSuperJeuDeTaquin} the signed two-rowed array~$\begin{pmatrix}
 x_1&\ldots &x_k\\
y_1&\ldots &y_k
\end{pmatrix}$ corresponds to the couple~$(t'', T)$, for some super tableaux~$t'$ in~$\YoungT(\Si,\lambda)$ and~$t''$ in~$\YoungT(\Si,\mu)$,  such that~$t'\star_{\YoungT(\Si)}t'' = t$. Hence, we obtain a pair~$(t', t'')$ in~$\YoungT(\Si,\lambda, \mu,  \dashv t)$. We deduce that the cardinal number of the set~$\SkewT(\Si,\nu/\lambda, \dashv T)$  is equal to  the coefficient~$c_{\lambda,\mu}^{\nu}$, showing the claim.
\end{proof}

Since neither~$\YoungT(\Si,\lambda, \mu,  \dashv t)$ nor~$\SkewT(\Si,\nu/\lambda, \dashv T)$ in the correspondence of the proof of Theorem~\ref{T:SuperLittlewoodRichardsonRule} depends on the contents of the super tableaux used to define the other, we deduce the following result:

\begin{corollary}
\label{C:SuperLittlewoodRichardsonRule}
The cardinal number~$c_{\lambda,\mu}^{\nu}$ of the sets~$\YoungT(\Si,\lambda, \mu,  \dashv t)$ and~$\SkewT(\Si,\nu/\lambda, \dashv T)$ is independent of choice of~$t$ and~$T$ and depends only on~$\lambda$,~$\mu$  and~$\nu$.
\end{corollary}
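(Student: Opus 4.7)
The plan is to extract the corollary directly from the bijection constructed in the proof of Theorem~\ref{T:SuperLittlewoodRichardsonRule}. The key observation is that the bijection exhibited there, for any fixed pair $(t,T) \in \YoungT(\Si,\nu) \times \YoungT(\Si,\mu)$, yields
\[
\# \YoungT(\Si,\lambda,\mu,\dashv t) \;=\; \# \SkewT(\Si,\nu/\lambda,\dashv T).
\]
Since the left-hand side depends only on $t$ (not on $T$) and the right-hand side depends only on $T$ (not on $t$), the common value must depend on neither.

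More precisely, I would first fix an arbitrary reference super tableau $T_0$ in $\YoungT(\Si,\mu)$. Applying Theorem~\ref{T:SuperLittlewoodRichardsonRule} with this choice, I obtain for every $t$ in $\YoungT(\Si,\nu)$ the equality $\# \YoungT(\Si,\lambda,\mu,\dashv t) = \# \SkewT(\Si,\nu/\lambda,\dashv T_0)$, whose right-hand side is a fixed integer determined by $\lambda$, $\mu$, $\nu$ (and $T_0$). This shows that $\# \YoungT(\Si,\lambda,\mu,\dashv t)$ is independent of $t$. Symmetrically, fixing a reference $t_0$ in $\YoungT(\Si,\nu)$ and applying the theorem gives $\# \SkewT(\Si,\nu/\lambda,\dashv T) = \# \YoungT(\Si,\lambda,\mu,\dashv t_0)$ for every $T$ in $\YoungT(\Si,\mu)$, showing that $\# \SkewT(\Si,\nu/\lambda,\dashv T)$ is independent of $T$.

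Combining these two independence statements, the two cardinalities coincide with a common integer that depends only on the partitions $\lambda$, $\mu$ and $\nu$, which is by definition the super Littlewood--Richardson coefficient $c_{\lambda,\mu}^{\nu}$. There is no substantive obstacle here: all the real work has been absorbed into the construction and inverse construction exhibited in the proof of Theorem~\ref{T:SuperLittlewoodRichardsonRule}, and the corollary is essentially a formal consequence of the fact that the parameters $t$ and $T$ appearing in the bijection play independent roles and can be chosen freely.
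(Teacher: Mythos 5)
Your proposal is correct and follows essentially the same route as the paper: the paper likewise deduces the corollary from the fact that the bijection of Theorem~\ref{T:SuperLittlewoodRichardsonRule} equates a quantity depending only on~$t$ with one depending only on~$T$, so the common value depends on neither. Your version merely makes the ``fix a reference $T_0$, then fix a reference $t_0$'' step explicit, which is a faithful elaboration of the paper's one-line argument.
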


\subsubsection{Super Littlewood--Richardson rule and Yamanouchi tableaux}
\label{SS:Yamanouchi}
A word over $[n]$ is called \emph{Yamanouchi} if, when read from left to right, the number of occurrences of $i'$ is always greater than or equal to the number of occurrences of $(i+1)'$ for all $i \ge 1$.
A skew tableau over $[n]$ is said to have \emph{weight} $(\lambda_1,\ldots,\lambda_\ell)$ if it contains exactly $\lambda_1$ entries equal to $1'$, $\lambda_2$ entries equal to $2'$, and so on up to $\lambda_\ell$ entries equal to $\ell'$.
A skew tableau $S$ over $[n]$ is called a \emph{Littlewood--Richardson tableau} if its reading word $\R(S)$ is Yamanouchi. In~\cite{Hage25}, we proved that there exists a bijection between pairs of tableaux in
\[
\displaystyle \LR(\mu, \lambda)\times \YoungT(\Si,\nu)
\]
and ordered pairs of tableaux in
\[
\displaystyle\YoungT(\Si,\lambda)\times\YoungT(\Si,\mu),
\]
where $\LR(\mu,\lambda)$ denotes the set of all Littlewood--Richardson tableaux of shape $\nu/\lambda$ and weight~$\mu$, for a partition $\nu$ containing $\lambda$.
As a consequence, we showed that for partitions~$\lambda$, $\mu$, and $\nu$ with~$\lambda\subseteq \nu$, the Littlewood--Richardson coefficient $c_{\lambda,\mu}^{\nu}$ is equal to the number of Littlewood--Richardson tableaux of shape $\nu/\lambda$ and weight $\mu$. Further details and explicit constructions are given in~\cite{Hage25}.

\begin{example}
\label{Ex:SuperLRExample}
Consider~$\Si=[4]$ with signature given by~$\Si_0$ the set of even numbers and~$\Si_1$  defined consequently. 
Let~$\lambda=(3,1,1)$ and~$\mu=(2,1)$, then~$\LR(\mu,\lambda)$ consists of the following:
\[
\ytableausetup{mathmode, boxsize=1em}
\begin{ytableau}
\empty&\empty&\empty&1&1\\
\empty&2\\
\empty&\none
\end{ytableau}
\qquad
\ytableausetup{mathmode, boxsize=1em}
\begin{ytableau}
\empty&\empty&\empty&1&1\\
\empty&\none\\
\empty&\none\\
2
\end{ytableau}
\qquad
\ytableausetup{mathmode, boxsize=1em}
\begin{ytableau}
\empty&\empty&\empty&1\\
\empty&1&2\\
\empty&\none
\end{ytableau}
\qquad
\ytableausetup{mathmode, boxsize=1em}
\begin{ytableau}
\empty&\empty&\empty&1\\
\empty&1\\
\empty&\none\\
2
\end{ytableau}
\qquad
\ytableausetup{mathmode, boxsize=1em}
\begin{ytableau}
\empty&\empty&\empty&\none\\
\empty&1&1\\
\empty&2
\end{ytableau}
\qquad
\ytableausetup{mathmode, boxsize=1em}
\begin{ytableau}
\empty&\empty&\empty&\none\\
\empty&1&1\\
\empty&\none\\
2
\end{ytableau}
\]
\[
\ytableausetup{mathmode, boxsize=1em}
\begin{ytableau}
\empty&\empty&\empty&1\\
\empty&1\\
\empty&2
\end{ytableau}
\qquad
\ytableausetup{mathmode, boxsize=1em}
\begin{ytableau}
\empty&\empty&\empty&1\\
\empty&\none\\
\empty&\none\\
1\\
2
\end{ytableau}
\qquad
\ytableausetup{mathmode, boxsize=1em}
\begin{ytableau}
\empty&\empty&\empty&\none\\
\empty&1\\
\empty&\none\\
1\\
2
\end{ytableau}
\qquad
\ytableausetup{mathmode, boxsize=1em}
\begin{ytableau}
\empty&\empty&\empty&\none\\
\empty&1\\
\empty&2\\
1
\end{ytableau}
\qquad
\ytableausetup{mathmode, boxsize=1em}
\begin{ytableau}
\empty&\empty&\empty&1\\
\empty&2\\
\empty&\none\\
1
\end{ytableau}.
\]
Hence, we have the following decomposition in~$R_{\Si}$:
\[
\begin{array}{rl}
S_{(3,1,1)}S_{(2,1)}=&S_{(4,3,1)}+S_{(3,3,2)}+S_{(3,3,1,1)}+S_{(5,2,1)}+S_{(5,1,1,1)}\\
                               &+2S_{(4,2,1,1)}+S_{(4,2,2)}+S_{(4,1,1,1,1)}+S_{(3,2,1,1,1)}+S_{(3,2,2,1)}.
\end{array}
\]
\end{example}

\subsubsection{Matrix version of the super-RSK correspondence}
\label{SS:MatrixVersion}

In~\cite{Hage26}, we gave bijective proofs of \emph{super Cauchy identities} by interpreting signed two-rowed arrays as matrices whose entries record the multiplicities of pairs drawn from signed alphabets. The parity of the indices imposes entrywise restrictions: pairs of mixed parity may occur at most once, whereas pairs of compatible parity may occur with arbitrary multiplicity. Via the super-RSK correspondence, these matrices are related to pairs of super tableaux of the same shape, with matrix transposition reflecting the symmetry of the correspondence. This framework leads to a combinatorial proof of a super Cauchy identity, expressing a rational generating function as a sum of products of super Schur functions. A variant of the construction, restricted to matrices with entries in~$\{0,1\}$, yields a correspondence between pairs of super tableaux of conjugate shapes and leads to a second super Cauchy identity. These results may be viewed as extensions of the classical Cauchy identities,~\cite{Knuth70}, and, under suitable order assumptions on the signed alphabets, recover earlier identities of Berele and Remmel,~\cite{BereleRemmel85}.

\begin{small}
\renewcommand{\refname}{\Large\textsc{References}}
\bibliographystyle{plain}
\bibliography{biblioCURRENT}
\end{small}

\quad

\vfill
\begin{flushright}
\begin{small}
\noindent \textsc{Nohra Hage} \\
\url{nohra.hage@univ-catholille.fr} \\
ICL, Junia, Université Catholique de Lille,\\
 LITL, F-59000 Lille, France\\
\end{small}
\end{flushright}

\vspace{0.25cm}

\begin{small}---\;\;\today\;\;-\;\;\hhmm\;\;---\end{small} \hfill

\end{document}